\theoremstyle{plain}
\newtheorem{ithm}{Theorem}
\newtheorem{theorem}{Theorem}[section]
\newtheorem{prop}[theorem]{Proposition}
\newtheorem{corollary}[theorem]{Corollary}
\newtheorem{lemma}[theorem]{Lemma}
\newenvironment{psmallmatrix}
  {\left(\begin{smallmatrix}}
  {\end{smallmatrix}\right)}
\theoremstyle{definition}
\newtheorem{definition}[theorem]{Definition}
\newtheorem{remark}[theorem]{Remark}
\long\def\symbolfootnote[#1]#2{\begingroup
\def\thefootnote{\fnsymbol{footnote}}\footnote[#1]{#2}\endgroup}
\def\GL{{\bf GL}}
\def\PP{{\mathbf P}}
\def\cA{\mathcal{A}}
\def\sgn{\mathrm{sgn}}
\def\N{\mathrm{N}}
\def\1{\mf{1}}
\DeclareMathOperator{\cond}{cond}
 \DeclareMathOperator{\End}{End}
\DeclareMathOperator{\sign}{sign}
\DeclareMathOperator{\ord}{ord}
\def\dord{\text{-}\ord{}}
\DeclareMathOperator{\cusps}{cusps}
 \DeclareMathOperator{\Cl}{Cl}
\DeclareMathOperator{\con}{con}
\newcommand{\mat}[4]{\left( \begin{array}{cc} {#1} & {#2} \\ {#3} & {#4}
\end{array} \right)}
\newcommand{\stack}[2]{\genfrac{}{}{0pt}{}{#1}{#2}}
\newcommand{\mf}{\mathfrak }
\def\fa{\mathfrak{a}}
\def\fn{\mathfrak{n}}
\def\fp{\mathfrak{p}}
\def\fq{\mathfrak{q}}
\def\fg{\mathfrak{g}}
\def\fm{\mathfrak{m}}
\def\ft{\mathfrak{t}}
\def\fr{\mathfrak{r}}
\def\fs{\mathfrak{s}}
\def\fl{\mathfrak{l}}
\def\fP{\mathfrak{P}}
\def\fd{\mathfrak{d}}
\def\fQ{\mathfrak{Q}}
\def\T{\mathbf{T}}
\def\Z{\mathbf{Z}}
\def\Q{\mathbf{Q}}
\def\C{\mathbf{C}}
\def\R{\mathbf{R}}
\def\bdf{\begin{defn}}
\def\edf{\end{defn}}
\def\cH{\mathcal{H}}
\def\cO{\mathcal{O}}
\def\cP{\mathcal{P}}
\def\cU{\mathcal{U}}
\def\cR{\mathcal{R}}
\def\ff{\mathfrak{f}}
\def\fb{\mathfrak{b}}
\def\fc{\mathfrak{c}}
\def\cP{{\cal P}}
\begin{document}

\baselineskip=17pt

\title{On Constant terms of Eisenstein Series}
\author{Samit Dasgupta \and Mahesh Kakde}

\date{}

\maketitle

\renewcommand{\thefootnote}{}

\footnote{2020 \emph{Mathematics Subject Classification}: Primary 11F41; Secondary 11F30.}

\footnote{\emph{Key words and phrases}: Eisenstein series, Hilbert modular forms, Ordinary forms.}

\renewcommand{\thefootnote}{\arabic{footnote}}
\setcounter{footnote}{0}

\begin{abstract}
We calculate the constant terms of certain Hilbert modular Eisenstein series at all cusps.  Our formula relates these constant terms to special values of Hecke $L$-series.  This builds on previous work of Ozawa, in which a restricted class of Eisenstein series were studied.  Our results have direct arithmetic applications---in separate work we apply these formulas to prove the Brumer--Stark conjecture away from $p=2$ and to give an exact analytic formula for Brumer--Stark units.
\end{abstract}

\tableofcontents

\section{Introduction}

Let $F$ be a totally real field of degree $d$, and let $M_k(\fn)$ denote the space of Hilbert modular forms of level $\fn \subset \cO_F$ and weight $k$ over $F$.  Let $E_k(\fn) \subset M_k(\fn)$ denote the subspace of Eisenstein series.
In this paper we generalize results of \cite{ddp}*{Section 2.1} and \cite{ozawa} to give the constant terms of nearly all Eisenstein series 
$E \in E_k(\fn)$ at all cusps.  The space $E_k(\fn)$ has a basis consisting of forms of the form $E_k(\eta, \psi)|_\fm$, where $\eta$ and $\psi$ are primitive ray class characters (see \S\ref{s:eisenstein}).  Our formula in Theorem~\ref{t:levelraised} gives the constant terms of these series at all cusps when $\fm$ is squarefree and coprime to the conductors of $\eta$ and $\psi$.  In fact, Theorems~\ref{t:eiscon1} and~\ref{t:levelraised} are more general than this; in particular we handle the case where $\eta$ and $\psi$ are not necessarily primitive characters.  We  work with all weights $k \ge 1$. In \cite{ozawa}, only primitive characters are considered, the level raising operator $|_\fm$ is not applied, and the weight $k$ taken to be at least $2$. 

There are concrete arithmetic applications of our results.  In \cite{dk}, we prove the Brumer--Stark conjecture away from $p=2$ and in \cite{dk2} we prove an exact $p$-adic formula for Brumer--Stark units.  Broadly speaking, both of these results apply Ribet's method, whereby cusp forms are constructed by taking linear combinations of products of Eisenstein series \cite{ribet}.  Central to the advance of \cite{dk} is the method by which this cusp form is constructed.  For this, we require knowledge of the constant terms at all cusps of level-raised Eisenstein series associated to possibly imprimitive characters; we also need to include weight $k=1$.   Therefore the calculations of \cite{ozawa} are not general enough for our application, which provides the motivation for this paper.  

In addition, we prove here some other results that may be of independent interest.  Firstly, we provide a complete enumeration of the cusps on the Hilbert modular variety.
Also, we prove that in weight $k > 1$, the cuspidality of modular forms that are ordinary at a prime $p$ is regulated by the constant terms at cusps that are unramified at $p$.  We provide two proofs of this fact; one applies our results on Eisenstein series, and the other is a direct study of the $U_p$ operator.  While these  two results are likely known to the experts, we have not found a precise reference for them in the literature.

\bigskip

We now outline the paper and describe our results in greater detail.
In \S\ref{s:hmf} we recall the definition of the space of Hilbert modular forms $M_k(\fn)$ of weight $k$ and level $\fn \subset \cO_F$, following 
Shimura \cite{shim}.   Associated to each $\lambda$ in the narrow class group $\Cl^+(F)$  is a congruence subgroup
 $\Gamma_{1, \lambda}(\fn) \subset \GL_2^+(F)$.  The open Hilbert modular variety corresponding to our forms has $h^+ = \#\Cl^+(F)$ 
 components:

\[ 
Y = \bigsqcup_{\lambda \in \Cl^+(F)} \Gamma_{1, \lambda}(\fn) \backslash \cH^d, \qquad \cH = \text{complex upper half plane.} 
\]

The space of modular forms $M_k(\fn)$ is endowed with an action of Hecke operators described in \S\ref{s:heckedefs}. 
Among these operators are the diamond operators $S(\fm)$, indexed by the classes $\fm \in G_\fn^+$, the narrow ray class group of $F$ attached to the modulus $\fn$.  The diamond operators play a central role in our applications \cite{dk}, \cite{dk2}.

In \S\ref{s:cusps}, we study the set of cusps associated to $Y$:
\[ \cusps(\fn) = \bigsqcup_{\lambda \in \Cl^+(F)} \Gamma_{1, \lambda}(\fn) \backslash \PP^1(F). \]
We provide an explicit enumeration of this set.  For $\fm \mid \fn$, let $Q_{\fm, \fn}$ denote the quotient of $G_{\fm}^+ \times G_{\fn/\fm}^+$ by the subgroup generated by diagonally embedded principal ideals $(x)$, where $x \in \cO_F$ is congruent to $1$ modulo $\fn$.   The following result proved in \S\ref{s:enumerate} is already implicit in \cite{wileseis}*{Pp.\ 422-423}. 

\begin{ithm}  There is a stratification $\cusps(\fn) = \bigsqcup_{\fm \mid \fn} \fQ_{\fm, \fn}$ with $\#\fQ_{\fm, \fn} = \#Q_{\fm, \fn}$.  Each $\fQ_{\fm, \fn}$ is stable under the action of $G_\fn^+$ via the diamond operators.
\end{ithm}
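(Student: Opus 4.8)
The plan is to give an explicit bijection between $\cusps(\fn)$ and the disjoint union $\bigsqcup_{\fm \mid \fn} Q_{\fm,\fn}$ by attaching to each cusp an invariant, the ``denominator ideal,'' which will turn out to be a divisor $\fm$ of $\fn$. First I would recall that a point of $\PP^1(F)$ on the $\lambda$-component is represented by a column vector $\binom{a}{c}$ with $a,c \in F$, not both zero, well-defined up to scaling by $F^\times$; after clearing denominators and using the fact that the relevant fractional ideals attached to $\Gamma_{1,\lambda}(\fn)$ are in the class $\lambda$, one may normalize so that $a, c$ generate $\cO_F$-ideals whose product is controlled by $\lambda$. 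The key numerical invariant is the ideal $\fm = \fn + (c)\fa^{-1}$ (or an appropriate variant), measuring the ``level'' at which the cusp sits; one checks this is a divisor of $\fn$ that is invariant under the action of $\Gamma_{1,\lambda}(\fn)$ on the left, since the lower-left entries of matrices in this group lie in $\fn$ and the group acts by integral unimodular matrices preserving the relevant lattice. This produces the stratification map $\cusps(\fn) \to \{\fm : \fm \mid \fn\}$, and $\fQ_{\fm,\fn}$ is by definition the fiber over $\fm$.

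Next I would compute each fiber. Having fixed $\fm \mid \fn$, a cusp in $\fQ_{\fm,\fn}$ is represented by a pair $(a,c)$ with prescribed ideal-theoretic behavior, and the residual freedom is: (i) scaling, (ii) the left action of $\Gamma_{1,\lambda}(\fn)$, and (iii) the choice of component $\lambda$. Unwinding these, the data of the cusp reduces to the class of $a$ modulo $\fm$ together with the class of $c$ modulo $\fn/\fm$ (up to the ideal classes these represent), subject to the identification coming from scaling by units and by elements $x \equiv 1 \pmod \fn$ embedded diagonally. This is exactly the definition of $Q_{\fm,\fn}$ as the quotient of $G_\fm^+ \times G_{\fn/\fm}^+$ by diagonally embedded principal ideals $(x)$ with $x \equiv 1 \pmod \fn$. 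Carrying this out carefully, using strong approximation to pass between adelic and classical descriptions and to handle all components $\lambda$ simultaneously, gives a bijection $\fQ_{\fm,\fn} \xrightarrow{\sim} Q_{\fm,\fn}$, hence $\#\fQ_{\fm,\fn} = \#Q_{\fm,\fn}$.

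For the last assertion, that each stratum is stable under the diamond operators: the operator $S(\fm')$ for $\fm' \in G_\fn^+$ acts on cusps via a matrix whose reduction modulo $\fn$ is diagonal (or by an ideal-twisting operation on $\PP^1(F)$), and in particular it does not change the denominator ideal $\fm$ attached to a cusp, since conjugating or multiplying by such an element preserves the valuation of the lower-left entry at every prime dividing $\fn$. Thus $S(\fm')$ permutes the cusps within each $\fQ_{\fm,\fn}$; under the bijection with $Q_{\fm,\fn}$ it becomes the natural translation action of $G_\fn^+$ (acting through its quotients $G_\fm^+ \times G_{\fn/\fm}^+$), which visibly preserves the set.

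The main obstacle I anticipate is bookkeeping the ideal classes correctly across all $h^+$ components: because $\Gamma_{1,\lambda}(\fn)$ depends on $\lambda$ and the natural representatives $\binom{a}{c}$ live in fractional ideals in the class $\lambda$ rather than in $\cO_F$, one must track how the narrow class $\lambda$ interacts with the ideals generated by $a$ and $c$, and verify that after quotienting by the $\Gamma_{1,\lambda}(\fn)$-action and summing over $\lambda$ one lands precisely on $Q_{\fm,\fn}$ and not some larger or smaller quotient. Getting the diagonal ``$x \equiv 1 \pmod \fn$'' subgroup exactly right—rather than, say, an $\cO_F^\times$-worth of extra identifications or too few—is the delicate point, and it is here that the hypotheses defining $\Gamma_{1,\lambda}(\fn)$ (as opposed to $\Gamma_{0}$ or $\Gamma$) must be used in full.
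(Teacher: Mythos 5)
Your plan follows essentially the same route as the paper: attach to each cusp the invariant $\fm_\cA = \gcd(\fc_\cA,\fn)$, show it is $\Gamma_{1,\lambda}(\fn)$-invariant, identify the fiber over $\fm$ with residue data modulo $\fm$ and $\fn/\fm$, and check that the diamond matrices do not change the invariant. Two points, however, deserve flagging. First, the real content of the theorem is the \emph{injectivity} of the invariant map: that two pairs $(a,c)$ and $(a',c')$ with the same ideal $\fb_\cA$, the same $\fm_\cA$, and the same classes in $(\fb/\fb\fm)^*/\cO_{F,+}^*$ and $(\fb\fd\ft_\lambda\fm/\fb\fd\ft_\lambda\fn)^*/\cO_{F,+}^*$ actually lie in one $\Gamma_{1,\lambda}(\fn)$-orbit. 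This is where the paper spends almost all of its effort (completing $(a,c)$ and $(a',c')$ to unimodular matrices and arranging $d'\equiv du \pmod{\fb^{-1}\fn}$ via a CRT argument), and your plan passes over it with ``unwinding these''; you do name it as the delicate point at the end, but without indicating how you would carry it out. Second, you claim a bijection $\fQ_{\fm,\fn}\xrightarrow{\sim}Q_{\fm,\fn}$; the paper proves only the cardinality equality, and for a reason: the stratum $\fQ_{\fm,\fn}$ fibers over $\Cl^+(F)\times\Cl(F)$ with fibers that are \emph{torsors} under $(\cR_\fm\times\cR_{\fn/\fm})/\cU$, while $Q_{\fm,\fn}$ is a group extension of $\Cl^+(F)\times\Cl(F)$ by the same quotient, so any identification requires non-canonical choices of base points. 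Your statement that the diamond action ``becomes the natural translation action'' on $Q_{\fm,\fn}$ is therefore only true after such choices; the paper instead proves stability directly, by computing $\fm_{S(\fm')\cA}\subset\fm_\cA$ and then invoking the group-action property to reverse the inclusion. None of this invalidates your outline, but the injectivity step is the part you would actually have to prove.
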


In \S\ref{s:eisenstein} we study the Eisenstein series in $M_k(\fn)$ and calculate their constant terms at all cusps. This generalizes the results of \cite{ddp}*{Proposition 2.1} and \cite{ozawa}. We work in a more general setting in this paper by considering all cusps and allowing for Eisenstein series associated to imprimitive characters.  We normalize our constant terms  (see (\ref{e:cadef}) below) so that they are independent of  choice of representatives (up to sign).  Furthermore, with these normalizations the constant terms exhibit nice integrality properties that are studied by Silliman in \cite{dks}. For an ideal $\fb \mid \fn$, define
\begin{equation} \label{e:cibn}
 C_\infty(\fb, \fn) = \bigsqcup_{\fb \mid \fm} \fQ_{\fm, \fn}. \end{equation}

In Theorem~\ref{t:eiscon1} we prove the following.

\begin{ithm}
Let $k >  1$, and let $\chi_1$ and $\chi_2$ be narrow ray class characters of $F$ with associated signs $q_1, q_2 \in (\Z/2\Z)^n$, respectively.  Assume that $\chi_2$ is primitive of conductor $\fb$. Then the constant term of $E_k(\chi_1, \chi_2)$ vanishes 
at any cusp not lying in $C_\infty(\fb, \fn)$.
Furthermore, if \[ \cA \in \Gamma_{1,\lambda}(\fn) \backslash \PP^1(F) \] is represented by $a/c \in \PP^1(F)$ and lies in 
$C_\infty(\fb, \fn),$
 the constant term of $E_k(\chi_1, \chi_2)$ at $\cA$ normalized as in (\ref{e:cadef}) is given by
\begin{equation} \label{e:constterm0}
\begin{aligned}
\frac{1}{2^n} \frac{\tau(\chi_1 \chi_2^{-1})}{\tau(\chi_2^{-1})} \left( \frac{\N\fb}{\N\ff}\right)^k & \sgn(-c)^{q_1} \sgn(a)^{q_2} \chi_1(\fc_{\cA}/\fb) \chi_2^{-1}(\fa_{\cA}) \\
& \times  L(\chi^{-1}, 1-k) \prod_{\fq}(1 - \chi(\fq) \N\fq^{-k}).
\end{aligned}
\end{equation}
Here $\chi$ denotes the  primitive character associated to $\chi_1\chi_2^{-1}$, $\ff = \cond(\chi)$, and 
  $\fq$ runs through all primes dividing $\fn$ but not $\ff$.  The integral ideals $\fa_\cA$ and $\fc_\cA$ associated to $\cA$ are defined in (\ref{e:acdef}), and the condition $\cA \in C_\infty(\fb, \fn)$ implies that $\fb \mid \fc_\cA$.
\end{ithm}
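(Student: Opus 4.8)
The plan is to reduce the computation of the constant term at an arbitrary cusp $\cA = a/c$ to the classical Fourier expansion of $E_k(\chi_1,\chi_2)$ at $\infty$ by choosing an explicit element $\gamma \in \GL_2^+(F)$ with $\gamma(\infty) = \cA$, acting by $\gamma$, and extracting the zeroth Fourier coefficient of the transformed form. Concretely, given $a/c \in \PP^1(F)$ one picks $b,d \in F$ with $ad - bc = 1$ and sets $\gamma = \mat{a}{b}{c}{d}$; the constant term of $E_k(\chi_1,\chi_2)$ at $\cA$ is then (up to the automorphy factor and the normalization in (\ref{e:cadef})) the constant term of $(E_k(\chi_1,\chi_2))|_k\gamma$ at $\infty$, computed on the appropriate component of $Y$ indexed by the narrow class of the relevant ideal. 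The ideals $\fa_\cA$ and $\fc_\cA$ of (\ref{e:acdef}) are exactly the integral ideals attached to the fractional ideals generated by $a$ and $c$, so the dependence of the answer on $\cA$ enters through these ideals together with the signs $\sgn(a)$, $\sgn(-c)$.

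The key steps, in order: (i) recall from \S\ref{s:eisenstein} the Fourier expansion of $E_k(\chi_1,\chi_2)$ at $\infty$, whose constant term involves $L(\chi_1\chi_2^{-1}, 1-k)$ (or $0$, depending on signs and whether $\chi_1\chi_2^{-1}$ is trivial) together with the Gauss sum factor $\tau(\chi_1\chi_2^{-1})/\tau(\chi_2^{-1})$ and the power of $\N\fb/\N\ff$; (ii) translate the cusp $\cA$ to $\infty$ via $\gamma$ as above, tracking carefully how the level structure and the character data transform — this is where $\chi_1(\fc_\cA/\fb)$ and $\chi_2^{-1}(\fa_\cA)$ appear, as the values of the nebentype-type characters on the ideals determined by $c$ and $a$; (iii) account for the sign characters $q_1, q_2$, which arise because at a real place the automorphy factor picks up $\sgn$ of the relevant entries of $\gamma$, producing $\sgn(-c)^{q_1}\sgn(a)^{q_2}$; (iv) use the hypothesis that $\chi_2$ is primitive of conductor $\fb$ to pin down precisely which cusps give a nonzero contribution — namely those in $C_\infty(\fb,\fn)$, equivalently those with $\fb \mid \fc_\cA$, since a cusp outside this locus forces a character sum over a nontrivial coset to vanish; (v) replace $\chi_1\chi_2^{-1}$ by its associated primitive character $\chi$ and remove the Euler factors at primes $\fq \mid \fn$, $\fq \nmid \ff$, yielding the product $\prod_\fq(1 - \chi(\fq)\N\fq^{-k})$; finally (vi) match the normalization (\ref{e:cadef}) to absorb the automorphy factor and the factor $2^{-n}$ coming from summing over the $2^n$ choices of signs at the archimedean places.

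I expect the main obstacle to be step (ii) together with (iv): correctly bookkeeping the transformation of the imprimitive character data and the level $\fn$ under $\gamma$, and identifying exactly the coset over which the resulting exponential sum is taken, so that one obtains the clean vanishing criterion $\cA \notin C_\infty(\fb,\fn)$ and the precise values $\chi_1(\fc_\cA/\fb)$, $\chi_2^{-1}(\fa_\cA)$ rather than something that depends on the choice of representative $a/c$ or on the auxiliary entries $b, d$. This requires a careful choice of representatives for the cusps adapted to the stratification $\cusps(\fn) = \bigsqcup_{\fm \mid \fn} \fQ_{\fm,\fn}$ from the first theorem, and a verification that the final formula is genuinely independent of these choices up to the sign ambiguity already present in (\ref{e:cadef}). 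The weight $k > 1$ hypothesis is used only to guarantee absolute convergence of the Eisenstein series so that the Fourier expansion and the term-by-term manipulations are valid; the case $k = 1$, handled elsewhere in the paper, requires a separate analytic continuation argument.
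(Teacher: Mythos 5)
Your high-level strategy---represent the cusp by a matrix $A \in \GL_2^+(F)$ sending $\infty$ to $a/c$, slash $E_k(\chi_1,\chi_2)$ by $A$, and read off the zeroth Fourier coefficient---is indeed the skeleton of the paper's proof. But as written the proposal has a genuine gap at its starting point, step (i). You propose to begin from ``the classical Fourier expansion of $E_k(\chi_1,\chi_2)$ at $\infty$.'' The $q$-expansion at $\infty$ (the coefficients $c(\fm, E_k(\chi_1,\chi_2))$ and $c_\lambda(0,\cdot)$ recalled in \S\ref{s:eisenstein}) does not transform in any usable way under the slash action, and your outline provides no mechanism by which the constant term of $E_k(\chi_1,\chi_2)|_{A,k}$ could be extracted from that data; note also that the Gauss-sum factor and the power of $\N\fb/\N\ff$ you attribute to the expansion at $\infty$ do not appear there at all. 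What is actually needed, and what the paper uses, is the defining lattice-sum presentation via Hecke's trick: $f_\lambda(z,s)$ is a sum over pairs $(a,b)$ in $\fr\times\fd^{-1}\fb^{-1}\ft_\lambda^{-1}\fr$ weighted by $\chi_1$, $\chi_2^{-1}$ and $(az+b)^{-k}|az+b|^{-2s}$. The slash by $A$ then acts by the change of variables $(a,b)\mapsto(a,b)A$ on the lattice, and the constant term of each partial sum $\sum_{(u,v)\equiv(u_0,v_0)}(uz+v)^{-k}|uz+v|^{-2s}$ is given by Shimura's formula (his equation (3.7)): it is supported on the cosets with $u_0$ in the smaller lattice, and for those cosets it is a partial zeta value in $v_0$. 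This is the key technical input that makes your steps (ii)--(v) possible, and it is absent from the outline. The vanishing off $C_\infty(\fb,\fn)$ then falls out because the surviving cosets force $b_0\fb\fd\ft_\lambda\fr^{-1}$ to be non-coprime to $\fb$, so $\chi_2^{-1}$ of it is zero---not, in weight $k>1$, from a character sum over a nontrivial coset (that mechanism is the one operating in the extra weight-one term). Finally, the representative must be chosen with care ($\det A=1$, $\beta\in(\ft_\lambda\fd\fb_\cA\fg)^{-1}$, $\delta\in\fb(\fb_\cA\fg)^{-1}$ with $\fg=\gcd(\fb,\fc_\cA)$) for the coset bookkeeping to close up; you correctly flag this as the hard point but do not resolve it.

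Two smaller inaccuracies. The factor $2^{-n}$ and the passage from $L(\chi,k)$ to $L(\chi^{-1},1-k)$ both come from the functional equation of $L(\chi,s)$ combined with the Gauss-sum identities, not from ``summing over the $2^n$ choices of signs at the archimedean places'': the lattice computation naturally produces $L(\chi,k)\prod_\fq(1-\chi(\fq)\N\fq^{-k})$, and the stated formula appears only after the functional equation is applied. And the hypothesis $k>1$ is not merely about convergence (the series is defined by analytic continuation in $s$ for all $k\ge 1$); its real role is that Shimura's constant-term formula acquires an additional non-holomorphic contribution precisely when $k=1$, which is the source of the second summand in part (2) of Theorem~\ref{t:eiscon1}. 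An argument resting only on absolute convergence would not see why that dichotomy occurs.
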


  In Theorem~\ref{t:eiscon1} we also consider the  case $k=1$.  In Theorem~\ref{t:levelraised} we build on the result above and consider a more generalize case; we calculate the constant terms of all level-raised Eisenstein series $E_k(\chi_1, \chi_2)|_\fm$, where $\chi_1$ and $\chi_2$ are possibly imprimitive, under certain mild conditions.
These results are essential in our arithmetic applications \cite{dk} and \cite{dk2}.  In those works, we construct cusp forms by taking the appropriate linear combinations of products of  Eisenstein series considered here with certain other auxiliary forms constructed in \cite{dks}.

In \S\ref{s:ordinary} we conclude with the following  result on the cuspidality of ordinary forms that is applied in our arithmetic applications \cite{dk}, \cite{dk2}.  Fix a prime $p$ and let 
 $\fP = \gcd(p^\infty, \fn)$ denote the $p$-part of $\fn$.  The set $C_\infty(\fP, \fn)$ defined in (\ref{e:cibn}) may be viewed as the set of ``$p$-unramified" cusps. 
\begin{ithm} \label{t:ord} Let $p$ be a prime.  If $f \in M_k(\fn)$ is $\fp$-ordinary for each prime $\fp \subset \cO_F$ dividing $p$, then $f$ is cuspidal if and only if the constant term of $f$ vanishes at each cusp in $C_\infty(\fP, \fn)$.
\end{ithm}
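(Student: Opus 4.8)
The ``only if'' direction requires no ordinarity hypothesis, since a cusp form has vanishing constant term at every cusp.  For the converse, assemble the constant terms at all cusps (normalized as in (\ref{e:cadef})) into a linear map $c\colon M_k(\fn)\to V$, where $V:=\bigoplus_{\cA}\C$ ranges over $\cusps(\fn)$; by definition $\ker c=S_k(\fn)$, so it suffices to prove $c(f)=0$.  We are given $e_\fp f=f$ for every prime $\fp\mid p$, where $e_\fp=\lim_n U_\fp^{n!}$ is the $\fp$-ordinary idempotent, and (as in Theorem~\ref{t:ord}) the weight satisfies $k>1$.

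The main computation is the action of $U_\fp$ on constant terms.  Writing out $U_\fp$ explicitly (\S\ref{s:heckedefs}) as a finite sum of slash operators and using the enumeration of cusps of \S\ref{s:enumerate} to put each cusp in standard form, one checks that the constant term of $U_\fp g$ at a cusp is a $\C$-linear function of the constant terms of $g$: the triangular factors occurring in the relevant coset decompositions shift $q$-expansions by nonnegative exponents, so only $0$-th coefficients feed into $0$-th coefficients.  Hence there is an operator $\widetilde U_\fp\in\End(V)$ with $c\circ U_\fp=\widetilde U_\fp\circ c$.  Moreover, tracking which stratum $\fQ_{\fm,\fn}$ each coset carries a cusp into, one finds that $\widetilde U_\fp$ is upper-triangular for the decreasing filtration $F^i_\fp:=\bigoplus_{\cA:\,\delta_\fp(\cA)\ge i}\C$, where $\delta_\fp(\cA):=v_\fp(\fn)-v_\fp(\fm)$ for $\cA\in\fQ_{\fm,\fn}$ (so $\delta_\fp(\cA)=0$ exactly for the $\fp$-unramified cusps): each $F^i_\fp$ is $\widetilde U_\fp$-stable, the induced action of $\widetilde U_\fp$ on $F^0_\fp/F^1_\fp$ has finite order (it is built from diamond operators and the permutation of cusps induced by $U_\fp$), while for $i\ge 1$ its action on $F^i_\fp/F^{i+1}_\fp$ has all eigenvalues divisible by $\N\fp^{k-1}$.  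It is exactly here that $k>1$ enters: for $i\ge1$ these eigenvalues then have strictly positive $p$-adic valuation.  (For $k=1$ this fails, and so does the theorem: e.g.\ $E_1(\chi_1,\chi_2)$ with $\fp\mid\cond(\chi_1)$ is $\fp$-ordinary but has nonzero constant terms at $\fp$-ramified cusps.)

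From $c\circ U_\fp^{n!}=\widetilde U_\fp^{n!}\circ c$ and the convergence of $U_\fp^{n!}$ to $e_\fp$ we obtain $c\circ e_\fp=\widetilde e_\fp\circ c$, where $\widetilde e_\fp:=\lim_n\widetilde U_\fp^{n!}\in\End(V)$ (this limit exists by the triangular structure) is the ordinary idempotent of $\widetilde U_\fp$, and by that same structure $\ker\widetilde e_\fp=F^1_\fp$, the span of the coordinates at the $\fp$-ramified cusps.  The $\widetilde e_\fp$ for $\fp\mid p$ commute, so $\widetilde e:=\prod_{\fp\mid p}\widetilde e_\fp$ is an idempotent.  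A cusp fails to lie in $C_\infty(\fP,\fn)$ precisely when $\delta_\fp(\cA)\ge1$ for some $\fp\mid p$, so $\sum_{\fp\mid p}\ker\widetilde e_\fp=\bigoplus_{\cA\notin C_\infty(\fP,\fn)}\C$.  If $y\in\mathrm{im}(\widetilde e)$ also lies in $\sum_{\fp\mid p}\ker\widetilde e_\fp$, writing $y=\sum_\fp y_\fp$ with $y_\fp\in\ker\widetilde e_\fp$ and using commutativity gives $\widetilde e\,y_\fp=0$ for each $\fp$, hence $y=\widetilde e\,y=0$.  Therefore the coordinate projection $V\to\bigoplus_{\cA\in C_\infty(\fP,\fn)}\C$ is injective on $\mathrm{im}(\widetilde e)$.

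Finally, $e_\fp f=f$ for all $\fp\mid p$ gives $c(f)=c(e_\fp f)=\widetilde e_\fp\,c(f)$ for each $\fp$, hence $c(f)=\widetilde e\,c(f)\in\mathrm{im}(\widetilde e)$; by hypothesis the $C_\infty(\fP,\fn)$-coordinates of $c(f)$ vanish, so $c(f)=0$ and $f\in S_k(\fn)$.  The one nonformal ingredient is the computation, in the second paragraph, of the matrix of $\widetilde U_\fp$ relative to the cusp basis of \S\ref{s:enumerate} --- everything else is bookkeeping with idempotents.  We will also give a second proof, deducing the result from the constant-term formulas of Theorems~\ref{t:eiscon1} and~\ref{t:levelraised}: after reducing (via the Hecke-stability of $S_k(\fn)$ and $E_k(\fn)$ and the vanishing of constant terms of cusp forms) to an Eisenstein eigenform $E$ with $e_\fp E=E$ for all $\fp\mid p$, those formulas show such an $E$ arises from a pair $(\chi_1,\chi_2)$ with $\fp\nmid\cond(\chi_1)$ for every $\fp\mid p$ --- again because $k>1$ forces the $\fp$-ordinary stabilization to have the unit $U_\fp$-eigenvalue $\chi_1(\fp)$ rather than the valuation-$(k-1)v_\fp(\N\fp)$ eigenvalue attached to $\fp\mid\cond(\chi_1)$ --- and that the constant terms of $E$ at $\fp$-ramified cusps are then explicit nonzero multiples of those at $\fp$-unramified cusps, so that vanishing on $C_\infty(\fP,\fn)$ forces $E=0$.
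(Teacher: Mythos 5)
Your argument is, at its computational core, the paper's second proof of Theorem~\ref{t:cusp}: the inputs you need about $\widetilde U_\fp$ --- that the constant term of $U_\fp g$ at $\cA$ is a combination of constant terms of $g$ at the cusps $(m_\beta A,\lambda)$, that $\fp^r\mid\fc_\cA$ with $r>0$ forces $\fp^{r+1}\mid\fc_{(m_\beta A,\lambda)}$ (whence your filtration is stable and the graded action vanishes on the intermediate strata), and that on the bottom stratum $\fp\nmid\fc_\cA$ the induced map is $\N\fp^{k-1}$ times an eventually periodic function matrix --- are exactly the coset computations in that proof, and the use of $k>1$ via the dichotomy ``unit eigenvalue versus eigenvalue of valuation $(k-1)v_p(\N\fp)$'' is the same. (Two small remarks: the theorem as stated does not carry the hypothesis $k>1$, though the paper's own argument also requires it; and your parenthetical $k=1$ counterexample is not one as stated --- a form with nonzero constant terms at $\fp$-ramified cusps only contradicts the theorem if its constant terms at the cusps of $C_\infty(\fP,\fn)$ all vanish, which you neither claim nor arrange.)

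The repackaging via idempotents does, however, introduce one genuine gap that the paper's direct argument avoids. To run your bookkeeping you need $\widetilde e_\fp=\lim_n\widetilde U_\fp^{n!}$ to exist on all of $V$ (you apply it to the components $y_\fp$ of an arbitrary decomposition, not just to vectors in the image of the constant term map), and for that you need the eigenvalues of $\widetilde U_\fp$ on $F^0_\fp/F^1_\fp$ to be roots of unity; the triangular structure says nothing about this top graded piece. You assert that the induced action there ``has finite order,'' but this is precisely the nontrivial point: it requires checking that for a cusp with $\fp^{v_\fp(\fn)}\mid\fc_\cA$ all $\N\fp$ cosets in (\ref{e:updef}) land on a single cusp class and that the normalization factors conspire to give a monomial matrix with unit entries (the analogue of $c_\mu(0,f|U_\fp)=c_{\mu\fp^{-1}}(0,f)$ at the $\infty$-cusps), or alternatively invoking the isomorphism (\ref{e:conke}) to identify $\widetilde U_\fp$ with $U_\fp$ acting on $E_k(\fn,E)$ and transporting Hida's idempotent --- which is essentially the content of the paper's first proof. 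The paper's second proof sidesteps this entirely: it never constructs an idempotent on the cusp space, but instead takes the given vanishing on $C_\infty(\fP,\fn)$ as the starting point and propagates it outward stratum by stratum, closing each stratum with the relation $a_\fp^r f(\cA'')=\N\fp^{(k-1)r}f(\cA'')$ around a cycle. Either supply the coset computation on the unramified stratum or reroute through (\ref{e:conke}); note also that the limits should be taken over a $p$-adic coefficient field rather than $\C$.
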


\section{Notation on Hilbert Modular Forms} \label{s:hmf}

We refer the reader to \cite[\S2.1]{ddp} for our precise definitions and notations, following Shimura \cite{shim}, concerning the space of classical Hilbert modular forms over the totally real field $F$.  We recall certain aspects of this definition.

\subsection{Hilbert Modular Forms}

Let $\cH$ denote the complex upper half plane endowed with the usual action of $\GL_2^+(\R)$ via linear fractional transformations, where $\GL_2^+$ denotes the group of matrices with positive determinant.  We fix an ordering of the $n$ embeddings $F \hookrightarrow \R$, which yields an embedding of $\GL_2^+(F) \hookrightarrow \GL_2^+(\R)^n$ and hence an action of $\GL_2^+(F)$ on $\cH^n$.  Here $\GL_2^+(F)$ denotes the group of matrices with totally positive determinant.

For each class $\lambda$ in the narrow class group $\Cl^+(F)$, we choose a representative fractional ideal $\ft_\lambda$.  Let $\fn \subset \cO_F$ be an ideal, and assume that the representative ideals $\ft_\lambda$ have been chosen to be relatively prime to $\fn$.  Define the groups
\begin{align*}
 \Gamma_{0, \lambda}(\fn) &= \left\{ \mat{a}{b}{c}{d} \in \GL_2^+(F): a, d \in \cO_F, c \in \ft_\lambda \fd \fn, b \in (\ft_\lambda \fd)^{-1}, ad-bc \in \cO_F^* \right\}, \\ 
  \Gamma_{1, \lambda}(\fn) &= \left\{ \mat{a}{b}{c}{d} \in \Gamma_{0, \lambda}: d \equiv 1 \pmod{\fn} \right\}.
 \end{align*} 
Here $\fd$ denotes the different of $F$.  

Let $k$ be a positive integer.  We denote by $M_k(\fn)$ the space of Hilbert modular forms for $F$ of level $\fn$ and weight $k$.  Each element $f \in M_k(\fn)$ is a tuple $f = (f_\lambda)_{\lambda \in \Cl^+(F)}$ of holomorphic functions $f_\lambda\colon \cH^n \rightarrow \C$ 
such that $f_\lambda|_{\alpha,k } = f_\lambda$ for all $\lambda \in \Cl^+(F)$ and $\alpha \in \Gamma_{1, \lambda}$.  Here the weight $k$ slash action is defined in the usual way:
\[ f_\lambda|_{\alpha,k }(z_1, \dotsc, z_n) = \N(\det(\alpha))^{k/2} \prod_{i=1}^n (c_i z_i + d_i)^{-k} \cdot f_\lambda\left( \frac{a_1 z_1 + b_1}{c_1 z_1 + d_1}, \dotsc, \frac{a_n z_n + b_n}{c_n z_n + d_n}\right), \]
where $a_i$ denotes the image of $a$ under the $i$th real embedding of $F$ and similarly for $b_i, c_i, d_i$.

\subsection{Constant terms and cusp forms} \label{s:constant}

  Suppose that $\cA = (A, \lambda)$ is an ordered pair with  \[ A= \mat{a}{b}{c}{d} \in \GL^+(F)\] and $\lambda \in \Cl^+(F)$.  We define the fractional ideal
 \[ \fb_\cA = a \cO_F + c(\ft_\lambda \fd)^{-1}. \]

Given $f = (f_\lambda) \in M_k(\fn)$ and a pair $\cA = (A, \lambda)$ as above, the function $f_\lambda|_{A, k}$ has a Fourier expansion
\begin{equation} \label{e:qexp}
f_\lambda|_{A, k}(z) = a_{\cA}(0) + \sum_{\stack{b \in \fa}{b \gg 0}} a_{\cA}(b) e_F(bz), 
\end{equation}
where $\fa$ is a lattice in $F$ depending on $\cA$, and
\[ 
e_F(bz) := \exp(2 \pi i (b_1 z_1 + \cdots +  b_n z_n)). 
\]

\begin{definition} The {\em normalized constant term} of the form $f$ at $\cA$ is
\begin{equation} \label{e:cadef}
c_{\cA}(0,f) = a_{\cA}(0) \cdot (\N\ft_{\lambda})^{-k/2}(\N\fb_\cA)^{-k}(\det A)^{k/2}.
\end{equation}
\end{definition}
As we will see later, the constant terms with this normalization will exhibit nice invariance properties as well as integrality properties. 
The space of {\em cusp forms} $S_k(\fn) \subset M_k(\fn)$ is defined to be subspace of forms $f$ such that $c_{\cA}(0, f)=0$ for all pairs $\cA$.

\subsection{$q$-expansion}  \label{s:q}

When $A=1$ we drop the subscript $A$ and write simply 
\[
c_{\lambda}(0, f) = a_{\lambda}(0)  (\N\ft_{\lambda})^{-k/2}.
\]

Furthermore when $A=1$, the lattice $\fa$ appearing in (\ref{e:qexp}) is the ideal $\ft_{\lambda}$. Any non-zero integral ideal $\fm$ may be written  $\fm = (b) \ft_{\lambda}^{-1}$ with $b \in \ft_{\lambda}$ totally positive for a unique $\lambda \in \Cl^+(F)$. We define the {\em normalized Fourier coefficients}
\[
c(\fm, f) = a_{\lambda}(b) (\N\ft_{\lambda})^{-k/2}.
\]
The collection of normalized Fourier coefficients $\{c_{\lambda}(0,f), c(\fm, f)\}$ is called the \emph{$q$-expansion} of $f$.
Note that these normalized coefficients are denoted with a capital $C$ in \cite{shim}.
\subsection{Forms over a field $K$} \label{s:formsoverr}

Each tuple $f \in M_k(\fn)$ is determined by its $q$-expansion, which the collection of coefficients
\[ 
c_\lambda(0, f) \in \C, \lambda \in \Cl^+(F), \qquad c(\fm, f) \in \C, \fm \subset \cO_F, \fm \neq 0
\]
defined in \S\ref{s:q}.
For any subfield $K \subset \mathbf{C}$, define $M_k(\fn, K)$ to be the $K$-vector subspace of $M_k(\fn)$ consisting of modular forms whose $q$-expansion coefficients all lie in $K$. A fundamental result of Shimura \cite[Theorem~7]{shimura} states
\begin{equation}
\label{e:mkkdef}
M_k(\fn, K) = M_k(\fn, \Q) \otimes_{\Q} K.
\end{equation}
We define $M_k(\fn, K)$ by (\ref{e:mkkdef}) more generally if $K$ is any field of characteristic 0.
This generalizes in the obvious way to define $S_k(\fn, K)$.

\subsection{Hecke operators} \label{s:heckedefs}

The space $M_k(\fn)$ is endowed with the action of a Hecke algebra $\tilde{\T} \subset \End(M_k(\fn))$ generated over $\Z$ by the following operators:
\begin{itemize}
\item $T_\fq$ for $\fq \nmid \fn$.
\item $U_\fq$ for $\fq \mid \fn$.
\item The ``diamond operators" $S(\fm)$ for  each class $\fm \in G_\fn^+ = $ narrow ray class group of $F$ of conductor $\fn$.
\end{itemize}

We refer to \cite[\S2]{shim} for the definition of these operators. We warn that in \emph{loc.\ cit.}\  both $T_{\fq}$ and $U_{\fq}$ are denoted by $T_{\fq}$. 

Let us recall the definition of the diamond operators $S(\fm)$.  Let $f = (f_\lambda)_{\lambda \in \Cl^+(F)} \in M_k(\fn)$ and let $\fm$ denote an  ideal of $\cO_F$ that is relatively prime to $\fn$, and which hence represents a class in $G_\fn^+$.  For each $\mu \in \Cl^+(F)$, let $\lambda \in \Cl^+(F)$ denote the class of $\mu \fm^{-2}$.  Write $\ft_\lambda \ft_\mu^{-1} \fm^2 = (x)$ where $x$ is a totally positive element of $F^*$, uniquely determined up to multiplication by a totally positive unit in $\cO_F^*$.  Let \begin{equation} \label{e:alphamu}
\alpha_\mu = \mat{a}{b}{c}{d} \in \GL_2^+(F) \end{equation} be a matrix satisfying the following conditions: \begin{equation} 
\label{e:alphamudef}
 a \in \fm,\ \ b \in \ft_\mu^{-1} \fd^{-1} \fm, \ \ c \in \ft_\lambda \fd \fn \fm, \ \ d \in \ft_\lambda \ft_\mu^{-1} \fm, \ \ \det(\alpha_\mu) = x, \ \ d \equiv x \pmod{\ft_\lambda \ft_\mu^{-1} \fm \fn}. \end{equation}
Then \begin{equation} \label{e:smdef}
f|_{S(\fm)} = (g_\mu)_{\mu \in \Cl^+(F)} \quad  \text{ where  } g_\mu = f_\lambda|_{\alpha_\mu}. \end{equation}

\subsection{Raising the level} \label{s:level}

For a Hilbert modular form $f \in M_k(\fn)$ and an integral ideal $\fq$ of $F$, there is a form \[ f|\fq \in M_k(\fn \fq) \] characterized by the fact that for nonzero integral ideals $\fa$ we have
\begin{equation} \label{e:fq}
 c(\fa, f|\fq) = \begin{cases}
c(\fa/\fq, f) & \text{if } \fq \mid \fa \\
0 & \text{if } \fq \nmid \fa
\end{cases} \end{equation}
and 
\begin{equation} \label{eq:levelconstant} 
c_\lambda(0, f|\fq) = c_{\lambda \fq}(0, f)
\end{equation}
for all $\lambda \in \Cl^+(F)$.  We recall the definition of $f|\fq$.  For every $\lambda$ there is a $\mu \in \Cl^+(F)$ and a totally positive element $a_{\mu} \in F$ such that
\[
\fq \ft_{\lambda} = (a_{\mu}) \ft_{\mu}.
\]
Then \begin{equation} \label{e:lrdef}
 (f|\fq)_{\lambda} := \N\fq^{-k/2} f_{\mu}|_{\left( \begin{array}{cc} a_{\mu} & 0 \\ 0 & 1\end{array}\right)}. \end{equation}
The fact that $f|\fq$ lies in $M_k(\fn\fq)$ and satisfies (\ref{e:fq})--(\ref{eq:levelconstant}) is proven in \cite[Prop 2.3]{shim}.

\section{Cusps}  \label{s:cusps}

\subsection{Admissibility} \label{ss:adm}

  Recall  the fractional ideal
 \begin{equation} \label{e:bdef}
 \fb_\cA = a \cO_F + c(\ft_\lambda \fd)^{-1} \end{equation}
 defined in \S\ref{s:constant} associated to a pair $\cA = (A, \lambda)$ with  $A \in \GL^+(F)$ and $\lambda \in \Cl^+(F)$.
We now define the integral ideals  \begin{equation} \label{e:acdef}
  \fa_\cA = a \fb_\cA^{-1}, \qquad \fc_\cA = c (\ft_\lambda \fd \fb_\cA)^{-1}. \end{equation}
   The ideals $\fa_\cA, \fc_\cA$ are relatively prime.   
   
   To explain the meaning of these invariants, consider the case $F=\Q$.  If $\cA$ represents the cusp $a/c \in \PP^1(\Q)$ with $a, c$ relatively prime integers, then $\fa_\cA = (a)$ and $\fc_\cA = (c)$.  Finally, we define
 \begin{equation} \label{e:mdef}
  \fm_\cA = \gcd(\fc_\cA, \fn). \end{equation}
   
Given a form $f \in M_k(\fn)$, 
it is clear that the normalized constant term $c_{\cA}(0, f)$ defined in \S\ref{s:constant} depends only on $A$ up to left multiplication by an element of $\Gamma_{1, \lambda}(\fn)$.
Furthermore, writing 
 \[ B = \left\{\mat{a}{b}{0}{d} \in \GL_2^+(F)\right\}, \]
it is {\em almost} true that $c_{\cA}(0, f)$ depends  on $A$ up to right multiplication by an element of $B$---there is a sign ambiguity 
\begin{equation} \label{e:borel}
 c_{(AA', \lambda)}(0, f) = \sign(\N d)^k c_{(A, \lambda)}(0, f), \qquad A'= \mat{a}{b}{0}{d} \in B. 
 \end{equation}
If $k$ is odd, and the class of $A$ in $\Gamma_{1, \lambda}(\fn) \backslash \GL_2^+(F)$
is fixed under right multiplication by an element $A' \in B$ with $\N d < 0$, then it follows from (\ref{e:borel}) that
$c_{\cA}(0, f) = 0$.   Let us determine the pairs $\cA$ for which this is the case.  
  
\begin{definition}
A pair $(\fm, \fn)$ with $\fm \mid \fn$ is called {\em admissible} if there does not exist a pair of units $\epsilon_1, \epsilon_2 \in \cO_F^*$ such that $\epsilon_1 \equiv 1 \pmod{\fm}, \epsilon_2 \equiv 1 \pmod{\fn/\fm}$, with 
$\N\epsilon_1 = \N\epsilon_2 = -1$ and $\epsilon_1/\epsilon_2$ totally positive. 
\end{definition}

\begin{definition}  With the level $\fn$ fixed,  a pair $\cA = (A, \lambda)$ 
with $A \in \GL_2^+(F)$ and $\lambda \in \Cl^+(F)$ is called admissible if $(\fm_A, \fn)$ is admissible.
\end{definition}

\begin{theorem}  \label{t:admissible}
Given a pair $\cA = (A, \lambda)$, the class of $A$ in $\Gamma_{1, \lambda}(\fn) \backslash \GL_2^+(F)$ is fixed by right multiplication by an element $A' \in B$ with $\N d < 0$ if and only if $\cA$ is not admissible.
\end{theorem}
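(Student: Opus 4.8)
The plan is to reinterpret the statement in terms of stabilizers and then carry out an explicit matrix computation. Fix $\cA = (A, \lambda)$ with $A = \mat{a_0}{b_0}{c_0}{d_0}$. Right multiplication by $A' = \mat{\alpha}{\beta}{0}{\delta} \in B$ fixes the class of $A$ in $\Gamma_{1,\lambda}(\fn)\backslash \GL_2^+(F)$ exactly when $\gamma := A A' A^{-1} \in \Gamma_{1,\lambda}(\fn)$, i.e.\ when $A' \in B \cap A^{-1}\Gamma_{1,\lambda}(\fn) A$; so the theorem asserts that this group contains an element with $\N\delta < 0$ (this is the ``$\N d<0$'' of the statement) if and only if $(\fm_\cA, \fn)$ fails to be admissible. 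Observe first that if $\gamma \in \Gamma_{1,\lambda}(\fn)$ then $\gamma$ has eigenvalues $\alpha, \delta \in F$ (with $\alpha$ the eigenvalue along the line $A e_1 = \binom{a_0}{c_0}$ representing the cusp $A\infty$), its characteristic polynomial is monic with coefficients in $\cO_F$ (using that $bc \in \fn$ for $\gamma = \mat{a}{b}{c}{d} \in \Gamma_{1,\lambda}(\fn)$), and $\alpha\delta = \det\gamma$ is a totally positive unit; hence $\alpha, \delta \in \cO_F^*$ with $\N\alpha = \N\delta \in \{\pm 1\}$. In particular the case $\N\delta < 0$ forces $\N\alpha = \N\delta = -1$.

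Next I would normalize the representative. Since $\fa_\cA, \fc_\cA$ (hence $\fm_\cA = \gcd(\fc_\cA, \fn)$) depend only on the double coset $\Gamma_{1,\lambda}(\fn) A B$, while $\fb_\cA$ scales by an arbitrary principal ideal under right multiplication by $B$, we may choose $A$ so that $A \in \SL_2(F)$ and $\fb_\cA$ is prime to $\fn$. A direct computation of $\gamma = A \mat{\alpha}{\beta}{0}{\delta} A^{-1}$ then shows that, writing $\beta' = b/a_0$, its entries are
\[ a = \alpha - c_0\beta', \qquad d = \delta + c_0\beta', \qquad b = a_0\beta', \qquad c = (c_0/a_0)(\alpha - d), \]
where $(a_0) = \fa_\cA\fb_\cA$ and $(c_0) = \fc_\cA\ft_\lambda\fd\fb_\cA$, so that $(c_0/a_0) = \fc_\cA\ft_\lambda\fd\fa_\cA^{-1}$. (The exceptional cusps with $a_0 = 0$ or $c_0 = 0$ are handled by the same bookkeeping, or directly.)

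The crux is to determine, for fixed units $\alpha, \delta$ with $\alpha\delta \gg 0$, whether a choice of $\beta'$ exists making $\gamma \in \Gamma_{1,\lambda}(\fn)$; this is a local problem. At $\fp \nmid \fn$ the only requirements are the $\Gamma_{0,\lambda}$-conditions, and the $\fp$-adic stabilizer of $A\infty$ imposes no condition on $\alpha, \delta$ beyond their being $\fp$-adic units (the stabilizer of any point of $\PP^1(F_\fp)$ in the relevant local group is conjugate to its upper-triangular subgroup, whose diagonal ranges over all units). At $\fp \mid \fn$, using that $\fb_\cA$ is prime to $\fn$ and $\fa_\cA, \fc_\cA$ are coprime, I would split according to whether $\fp \mid \fc_\cA$ and track the constraints imposed by $c \in \ft_\lambda\fd\fn$, $b \in (\ft_\lambda\fd)^{-1}$, and $d \equiv 1 \pmod{\fn}$: in each case the allowable values of $c_0\beta'$ form a prescribed $\fp$-adic coset, and the outcome is that $\gamma$ can be made to lie in $\Gamma_{1,\lambda}(\fn)$ at $\fp$ precisely when $\delta \equiv 1 \pmod{\fp^{v_\fp(\fm_\cA)}}$ and $\alpha \equiv 1 \pmod{\fp^{v_\fp(\fn/\fm_\cA)}}$ (note $v_\fp(\fm_\cA) + v_\fp(\fn/\fm_\cA) = v_\fp(\fn)$). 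Combining over all $\fp \mid \fn$ and invoking the approximation theorem to produce one global $\beta'$ satisfying the finitely many local conditions together with the lattice condition $b \in (\ft_\lambda\fd)^{-1}$, we conclude that $B \cap A^{-1}\Gamma_{1,\lambda}(\fn)A$ contains $\mat{\alpha}{\beta}{0}{\delta}$ for some $\beta$ if and only if $\alpha, \delta \in \cO_F^*$, $\alpha\delta \gg 0$, $\delta \equiv 1 \pmod{\fm_\cA}$, and $\alpha \equiv 1 \pmod{\fn/\fm_\cA}$.

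Finally I would read off the equivalence. If such an element has $\N\delta < 0$, then $\epsilon_1 := \delta$ and $\epsilon_2 := \alpha$ are units with $\N\epsilon_1 = \N\epsilon_2 = -1$, $\epsilon_1 \equiv 1 \pmod{\fm_\cA}$, $\epsilon_2 \equiv 1 \pmod{\fn/\fm_\cA}$, and $\epsilon_1/\epsilon_2 = \delta/\alpha$ totally positive (since $\alpha\delta \gg 0$ and $\alpha^2 \gg 0$), so $(\fm_\cA, \fn)$ is not admissible. Conversely, given such $\epsilon_1, \epsilon_2$, the pair $\delta = \epsilon_1$, $\alpha = \epsilon_2$ satisfies $\alpha\delta = \epsilon_1\epsilon_2 \gg 0$ together with the required congruences, so by the previous step it yields an element of $B \cap A^{-1}\Gamma_{1,\lambda}(\fn)A$ with $\N\delta = -1 < 0$. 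I expect the prime-by-prime analysis to be the main obstacle — correctly untangling the interaction between the lattice condition on $\beta$, the congruence $d \equiv 1 \pmod{\fn}$, and the divisibility required of $c$, particularly at primes dividing both $\fn$ and $\fa_\cA$; the remaining steps are formal.
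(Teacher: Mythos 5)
Your proposal is correct in substance but takes a genuinely different route from the paper. The paper deduces Theorem~\ref{t:admissible} from Lemma~\ref{l:bijection}: non-admissibility supplies a unit $\epsilon$ with $\varphi(a\epsilon,c\epsilon)=\varphi(a,c)$, and the injectivity of $\varphi$ then produces $\gamma\in\Gamma_{1,\lambda}(\fn)$ with $\gamma A=A\,\diag(\epsilon,\epsilon^{-1})N$ for some $N=\mat{1}{*}{0}{*}$; conversely, compatibility of $\varphi$ with the first-column scaling $(a,c)\mapsto(ax,cx)$ yields congruences $x\equiv u_1\pmod{\fm}$, $x\equiv u_2\pmod{\fn/\fm}$ only up to totally positive units. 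Since Lemma~\ref{l:bijection} is needed anyway for the enumeration of cusps, the theorem comes nearly for free there. You instead compute the group $B\cap A^{-1}\Gamma_{1,\lambda}(\fn)A$ directly. Your pivotal assertion --- that after normalizing $\det A=1$ and $\fb_\cA$ prime to $\fn$, this group contains $\mat{\alpha}{\beta}{0}{\delta}$ for some $\beta$ iff $\alpha,\delta\in\cO_F^*$, $\alpha\delta\gg 0$, $\delta\equiv 1\pmod{\fm_\cA}$ and $\alpha\equiv 1\pmod{\fn/\fm_\cA}$ --- is only sketched, but it does check out: setting $t=c_0\beta'$, the membership conditions reduce to $t\in\fc_\cA$, $t\equiv 1-\delta\pmod{\fn}$, and $\alpha-\delta-t\in\fn\fa_\cA\fc_\cA^{-1}$, and a prime-by-prime check (splitting on $v_\fp(\fc_\cA)\ge v_\fp(\fn)$, $0<v_\fp(\fc_\cA)<v_\fp(\fn)$, and $v_\fp(\fc_\cA)=0$) shows these are simultaneously solvable for $t$ exactly under your stated congruences, after which strong approximation globalizes the choice. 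Your final translation to admissibility is also correct; note that your witnesses $\epsilon_1=\delta$, $\epsilon_2=\alpha$ differ from the paper's $x/u_1$, $x/u_2$, and both are legitimate. What your approach buys is a sharper, self-contained description of the full Borel stabilizer of a cusp (exact congruences on both diagonal entries, not just congruences up to totally positive units on one); what the paper's approach buys is economy, piggybacking on a lemma it must prove regardless, and avoiding the local bookkeeping you rightly identify as the main labor.
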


Before proving the theorem, we introduce some notation and prove an important lemma.  Given a fractional ideal $\fb$ and an integral ideal $\fm$, we denote by $(\fb / \fb \fm)^*$ the subset of elements of $\fb/\fb\fm$ that generate this quotient as an $\cO_F/\fm$-module.  This is a principal homogeneous space for the group $(\cO_F/\fm)^*$.  

\begin{definition} For a fractional ideal $\fb$ and an integral ideal $\fm$, we define 
\[ \cR^{\fb}_{\fm}  = (\fb / \fb \fm)^*/ \cO_{F,+}^{*}, \]
the quotient of the set $(\fb / \fb \fm)^*$ by the action of multiplication by the group of totally positive units of $F$.
\end{definition}

\begin{definition} \label{d:pl}
Let  $\cP_{\lambda}(\fn)$  be the set of tuples $(\fb, \fm, a, c)$ where $\fb$ is a fractional ideal of $F$, $\fm$ is an integral ideal dividing $\fn$, $a \in \cR^{\fb}_{\fm}$, and $c \in \cR^{\fb \fd \ft_\lambda \fm}_{\fn/\fm}$.
\end{definition}

  The heart of Theorem~\ref{t:admissible} is the following lemma.

\begin{lemma}  \label{l:bijection}
Fix $\lambda \in \Cl^+(F)$. 
  
  There is a canonical bijection
\[ \varphi \colon \Gamma_{1, \lambda}(\fn) \backslash ( F^2 \setminus (0,0)) \longrightarrow \cP_{\lambda}(\fn) \]
given by $(a, c) \mapsto (\fb, \fm, \overline{a}, \overline{c}), $ where $\fb = \fb_\cA$ and $\fm = \fm_\cA$ are defined as in (\ref{e:bdef}) and (\ref{e:mdef}).
\end{lemma}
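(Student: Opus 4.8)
\textbf{Proof proposal for Lemma~\ref{l:bijection}.}

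The plan is to check that $\varphi$ is well-defined, construct an explicit inverse, and verify the two maps are mutually inverse. First I would verify that $\varphi$ is well-defined on orbits. Given $(a,c) \in F^2 \setminus (0,0)$, complete it (after clearing denominators appropriately) to a matrix $A = \mat{a}{b}{c}{d} \in \GL_2^+(F)$, so the invariants $\fb = \fb_\cA$, $\fm = \fm_\cA$ make sense; note $\fb$ depends only on $(a,c)$, not on the completion. One computes that $a \in \fb$ generates $\fb/\fb\fm$ over $\cO_F/\fm$: indeed $a\cO_F + c(\ft_\lambda\fd)^{-1} = \fb$ means $a\cO_F + \fb\fc_\cA = \fb$ where $\fc_\cA = c(\ft_\lambda\fd\fb)^{-1}$, and since $\gcd(\fa_\cA,\fc_\cA) = 1$ and $\fm = \gcd(\fc_\cA,\fn)$, reducing mod $\fb\fm$ shows $\overline{a}$ generates. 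Similarly $c \in \fb\fd\ft_\lambda$ and one checks $\overline{c}$ generates $(\fb\fd\ft_\lambda\fm)/(\fb\fd\ft_\lambda\fm \cdot \fn/\fm)$ over $\cO_F/(\fn/\fm)$, using again that $\fa_\cA$ and $\fc_\cA$ are coprime and $\fm \mid \fc_\cA$. Then I must show the class $\overline{a} \in \cR^\fb_\fm$ and $\overline{c} \in \cR^{\fb\fd\ft_\lambda\fm}_{\fn/\fm}$ are independent of the choice of $A$ representing a fixed $\Gamma_{1,\lambda}(\fn)$-orbit: left multiplication by $\gamma = \mat{\alpha}{\beta}{\gamma'}{\delta} \in \Gamma_{1,\lambda}(\fn)$ sends $(a,c)$ to $(\alpha a + \beta c, \gamma' a + \delta c)$; since $\delta \equiv 1 \pmod \fn$, $\gamma' \in \ft_\lambda\fd\fn$, $\alpha,\delta \in \cO_F$ with $\alpha\delta - \beta\gamma' \in \cO_F^*$, one checks $\alpha$ is a unit mod $\fm$ (as $\fm \mid \fn$ and $\alpha\delta - \beta\gamma' \in \cO_F^*$ forces $\alpha$ coprime to $\fm$ since $\gamma'\in\fn\subseteq\fm$-part), so modulo $\fb\fm$ the new first coordinate is $\alpha \bar a$, a unit multiple; the unit ambiguity on $\alpha$ is exactly $\cO_{F,+}^*$ up to the principal-unit condition, which is why we quotient by $\cO_{F,+}^*$ in the definition of $\cR$. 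The second coordinate transforms compatibly modulo $\fb\fd\ft_\lambda\fn$.

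Next I would construct the inverse. Given $(\fb, \fm, \bar a, \bar c) \in \cP_\lambda(\fn)$, lift $\bar a$ to $a \in \fb$ and $\bar c$ to $c \in \fb\fd\ft_\lambda\fm$; the genericity conditions guarantee $a\cO_F + c(\ft_\lambda\fd)^{-1} = \fb$ and $\gcd(a\fb^{-1}, c(\ft_\lambda\fd\fb)^{-1}) $ has $\fn$-part equal to $\fm$—actually one must choose the lifts so that $\fc_\cA = c(\ft_\lambda\fd\fb)^{-1}$ is coprime to $\fa_\cA = a\fb^{-1}$ and so that $\gcd(\fc_\cA, \fn) = \fm$ exactly (not just divisible by $\fm$); this is achievable by the approximation/CRT lemma since $\bar c$ generating modulo $\fn/\fm$ forces the $\fn/\fm$-part of $\fc_\cA$ to be trivial, and one adjusts the lift of $a$ by an element of $\fb\fm$ to kill common factors with $\fn$ outside $\fm$. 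Then $(a,c)$ is the required preimage, and its $\Gamma_{1,\lambda}(\fn)$-orbit is well-defined by the same transformation analysis as above. Finally I would check $\varphi \circ \varphi^{-1} = \mathrm{id}$ and $\varphi^{-1} \circ \varphi = \mathrm{id}$; both reduce to the statements that $(\fb_\cA, \fm_\cA)$ recovers the starting ideal data and that two pairs $(a,c), (a',c')$ with the same image differ by an element of $\Gamma_{1,\lambda}(\fn)$. The latter is the crux: given matching reductions, I solve for $\gamma \in \GL_2^+(F)$ with $\gamma(a,c)^t = (a',c')^t$ lying in $\Gamma_{1,\lambda}(\fn)$, by first matching the first coordinates up to $\fb\fm$ (absorbing the discrepancy into $\beta c$ with $\beta \in (\ft_\lambda\fd)^{-1}$) and the second up to $\fb\fd\ft_\lambda\fn$ (absorbing into $\gamma' a$ with $\gamma' \in \ft_\lambda\fd\fn$), then arranging $\delta \equiv 1 \pmod \fn$ and totally positive determinant a unit.

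I expect the main obstacle to be the surjectivity argument, specifically the choice of lifts in $\varphi^{-1}$: ensuring the lift of $c$ can be taken so that $\fc_\cA$ has exactly the prescribed $\fn$-part $\fm$ (rather than something divisible by $\fm$) and simultaneously that $\fa_\cA$ and $\fc_\cA$ are globally coprime. This requires a careful application of strong approximation / the Chinese Remainder Theorem handling the primes dividing $\fn$, the primes dividing $\fb$, and extra primes introduced by lifting, and keeping track of the totally-positive-unit ambiguity throughout. The transformation-invariance computations under $\Gamma_{1,\lambda}(\fn)$ are routine but must be done on both coordinates and at the correct moduli; the bookkeeping with the different $\fd$ and the representative ideal $\ft_\lambda$ is where sign and normalization errors are most likely to creep in.
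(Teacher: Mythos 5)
Your overall architecture (well-definedness, an explicit inverse for surjectivity, then matching two preimages by an element of $\Gamma_{1,\lambda}(\fn)$) parallels the paper's, and the first two steps are essentially fine --- indeed easier than you fear. The coprimality of $\fa_\cA$ and $\fc_\cA$ is automatic from $a\cO_F + c(\ft_\lambda\fd)^{-1} = \fb$, and any lift of a class in $(\fb\fd\ft_\lambda\fm/\fb\fd\ft_\lambda\fn)^*$ automatically satisfies $\gcd(\fc_\cA,\fn)=\fm$ exactly; so the CRT gymnastics you anticipate for surjectivity collapse to the paper's two-line scaling argument. You have located the difficulty in the wrong place: it is the injectivity step, which you compress into ``then arranging $\delta\equiv 1\pmod{\fn}$ and totally positive determinant a unit,'' that carries all the content, and as written your plan cannot be completed there.

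Concretely: if $\gamma=\mat{\alpha}{\beta}{\gamma'}{\delta}\in\Gamma_{1,\lambda}(\fn)$ sends $(a,c)$ to $(a',c')$, then $\gamma'a\in\fb\fd\ft_\lambda\fn$ and $(\delta-1)c\in\fb\fd\ft_\lambda\fn$, so $c'\equiv c\pmod{\fb\fd\ft_\lambda\fn}$ \emph{on the nose}; but equality of the fourth components in $\cR^{\fb\fd\ft_\lambda\fm}_{\fn/\fm}$ only gives $c'\equiv uc$ for some totally positive unit $u$, and when $u\not\equiv 1\pmod{\fn}$ no such $\gamma$ exists. For instance, take $F=\Q(\sqrt2)$, $\ft_\lambda=\cO_F$, $\fn=(3)$, $u=3+2\sqrt2$, $(a,c)=(1,2\sqrt2)$ and $(a',c')=(1,2\sqrt2\,u)$: these have equal image under $\varphi$, yet the second row of any candidate $\gamma$ forces $\delta\equiv u\pmod{3}$, which is incompatible with $\delta\equiv 1\pmod{3}$. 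So the step you defer is not routine bookkeeping but a genuine obstruction, and you should be aware that the paper's own argument meets the same wall: after completing to matrices $M,M'$ with $\det M=1$, $\det M'=u$ and arranging $d'\equiv du\pmod{\fb^{-1}\fn}$, the lower-right entry of $M'M^{-1}$ is $-bc'+ad'\equiv u\pmod{\fn}$, not $1$. What is actually used downstream is Corollary~\ref{c:cusps}, where both sides are further divided by $F^*$; there the extra diagonal scaling by units lets one trade the offending $u$ between the two coordinates, and the count in Corollary~\ref{c:cuspsize} goes through. Any correct write-up must therefore either carry the $F^*$-quotient throughout, or refine the fourth component of $\cP_\lambda(\fn)$ so that it is not divided by $\cO_{F,+}^*$; your proposal, like the paper's sketch, does neither.
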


\begin{proof}  The fact that the map $\varphi$ is well-defined is elementary and left to the reader.  Surjectivity is also not difficult.
Given a fractional ideal $\fb$ and an integral ideal $\fm \mid \fn$, choose $c \in \ft_\lambda \fd \fb$ such that $\gcd(c \fd^{-1} \ft_\lambda^{-1}, \fn) = \fm$.   Scaling $c$ by an appropriate element of $\cO_F$ relatively prime to $\fn$, we can ensure that $c$ lands in any class in $\cR^{\fb \fd \ft_\lambda \fm}_{\fn/\fm}$ without changing the gcd condition.
 Next we choose any $a \in \cO_F$ such that 
 \begin{equation} \label{e:idealmu} a \cO_F + c (\fd \ft_\lambda)^{-1} = \fb.
 \end{equation}  Scaling $a$ by an appropriate element  of $\cO_F$ relatively prime to $c (\fd \ft_\lambda \fb)^{-1}$, we can ensure that $a$ lands in any class in $\cR_{\fm}^{\fb}$ without affecting (\ref{e:idealmu}).  This proves the desired surjectivity.
 
 For injectivity, suppose that \begin{equation} \label{e:injective}
 \varphi(a, c) = \varphi(a', c') = (\fb, \fm, \overline{a}, \overline{c}).\end{equation}  Let $\cA$ correspond to $(a,c)$ and $\cA'$ to $(a', c')$ as in the statement of the lemma.  From the third component of (\ref{e:injective}), 
  there exists a totally positive unit $\epsilon \in \cO_{F,+}^*$ such that
 $a' \equiv \epsilon a \mod{\fb\fm}$.  
  Since $c \fd^{-1} \ft_\lambda^{-1} + \fb\fn = \fb\fm$,
   we can act by an element of the form 
   $\mat{\epsilon}{b}{0}{1}$ on $(a,c)$ with $b \in (\fd \ft_\lambda)^{-1}$ 
   to ensure that $a \equiv a' \pmod{\fb\fn}$.  
 
 Next let $u \in \cO_{F,+}^*$ such that 
    $c' \equiv u c \pmod{\fb \fd \ft_\lambda \fn}$.  Such a $u$ exists by the fourth component of (\ref{e:injective}).
 Note that the pairs $(a, c), (a', c')$ can be completed to matrices 
 \[  M = \mat{a}{b}{c}{d}\!, \ \det(M) = 1, \qquad M' = \mat{a'}{b'}{c'}{d'}\!, \ \det(M') = u, \]
 with $b, b' \in (\fd \ft_\lambda \fb)^{-1}$ and $d, d' \in \fb^{-1}$.  We claim that $d$ and $d'$ can be chosen to satisfy \[ d' \equiv du \pmod{\fb^{-1} \fn}. \]  
 Granting the claim, it is straightforward to check that \[ M' M^{-1}\binom{a}{c} = \binom{a'}{c'} \qquad \text{ and } \quad M' M^{-1} \in \Gamma_{1, \lambda}(\fn) \] as desired.
 
 It remains prove the claim.
 Given any $y \in \fb^{-1}  \fc_\cA$, we can write $y = cx$ with $x \in (\fd \ft_\lambda \fb^2)^{-1}$ and 
 replace $(b,d)$ by $(b + ax, d + cx)$.  Hence $d$ can be replaced by any element in its equivalence class in $\fb^{-1}/\fb^{-1} \fc_{\cA}$.  Since $\gcd(\fc_\cA, \fn) = \fm$, to prove the claim it therefore suffices to show that $d' \equiv ud \pmod{\fb^{-1} \fm}$.  If $a=0$, this is clear since $\fm =1$.  Otherwise, multiplication by $a$ induces an ismorphism $\fb^{-1}/\fb^{-1}\fm \rightarrow \fa_\cA / \fa_\cA \fm$, so we must show that $ad' \equiv adu \pmod{\fa_\cA \fm}$.  Now $\fm$ divides $\fc_\cA$, which is coprime to $\fa_\cA$, so by the Chinese Remainder Theorem it suffices to separately show that $ad' \equiv adu \pmod{\fa_\cA}$ and $ad' \equiv adu \pmod{\fm}$.  The first of these is trivial since $ad', adu \in \fa_\cA$.  To prove $ad' \equiv adu \pmod{\fm}$ we note
 \[ ad' - adu = b'c' + (a - a')d' - bcu \]
 with $c=b'c' \in \fc_{\cA'} \subset \fm$, $bcu \in \fc_{\cA} \subset \fm$, and $(a-a')d \in \fn \subset \fm$.  This concludes the proof.
\end{proof}

\begin{proof}[Proof of Theorem~\ref{t:admissible}]  Suppose that $\cA = (A, \lambda)$ is not admissible with $A = \mat{a}{*}{c}{*}$, and let $\fm = \fm_\cA$.
Let $\epsilon = \epsilon_1 \in \cO_F^*$ in the definition of admissibility, so $\N\epsilon = -1$, $\epsilon \equiv 1 \pmod{\fm}$, and $\epsilon \equiv \epsilon_1\epsilon_2 \pmod{\fn/\fm}$ with  $\epsilon_1\epsilon_2 \in \cO_F^*$ a totally positive unit.  Right multiplication by $M = \mat{\epsilon}{0}{0}{\epsilon^{-1}}$ sends $A$ to $\mat{a\epsilon}{*}{c\epsilon}{*}$. It is immediate from the definition that $\varphi(a,c) = \varphi(a\epsilon, c\epsilon)$, hence Lemma~\ref{l:bijection} implies that there exists $\gamma \in \Gamma_{1, \lambda}(\fn)$ such that $\gamma A$ and $AM$ have the same first column.   Therefore there exists $N = \mat{1}{*}{0}{*} \in B$ such that $\gamma A = AMN$.  It follows that the class of $A$ in $\Gamma_{1, \lambda}(\fn) \backslash \GL_2^+(F)$ is fixed by right multiplication by $MN \in B$, and the lower right entry of $MN$ has negative norm.

To prove the converse, suppose that $A = \mat{a}{b}{c}{d} \in \GL_2^+(F)$ and the class of $A$ in $\Gamma_{1, \lambda}(\fn) \backslash \GL_2^+(F)$ is fixed by right multiplication by $M = \mat{x}{*}{0}{z} \in B$, where $\N z < 0$.
By Lemma~\ref{l:bijection}, we have $\varphi(a,c) = \varphi(ax, cx)$. From the first coordinate of this equation, we see that $x \in \cO_F^*$.
From the third and fourth components we see that there exist totally positive units $u_1, u_2 \in \cO_{F, +}^*$ such that
\[ x \equiv u_1 \pmod{\fm} \qquad x \equiv u_2 \pmod{\fn/\fm}, \]
where $\fm = \fm_{\cA}$.
Note that $\N x < 0$ (and hence $\N x = -1$) since $\N z < 0$ and $xz$ is totally positive.  Then letting $\epsilon_1 = x/u_1$ and $\epsilon_2 = x/u_2$ shows that $(\fm, \fn)$ is not admissible.
  \end{proof}

\begin{corollary}  Let $k$ be odd, and let $f \in M_k(\fn)$.  The constant term $c_\cA(0, f)$ vanishes if  $\cA$ is not  admissible.
\end{corollary}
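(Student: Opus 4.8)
The plan is to deduce this directly from Theorem~\ref{t:admissible} together with the sign-ambiguity relation (\ref{e:borel}); indeed the argument is precisely the one foreshadowed in the paragraph preceding the definition of admissibility. Suppose $\cA = (A, \lambda)$ is not admissible. By Theorem~\ref{t:admissible}, the class of $A$ in $\Gamma_{1, \lambda}(\fn) \backslash \GL_2^+(F)$ is fixed by right multiplication by some element $A' = \mat{a}{b}{0}{d} \in B$ with $\N d < 0$; concretely, there exists $\gamma \in \Gamma_{1, \lambda}(\fn)$ with $AA' = \gamma A$.

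The key step is then to play the two invariance properties of the normalized constant term against each other. On the one hand, $c_\cA(0, f)$ depends only on $A$ up to left multiplication by $\Gamma_{1, \lambda}(\fn)$ (as observed in \S\ref{ss:adm}), so $c_{(AA', \lambda)}(0, f) = c_{(\gamma A, \lambda)}(0, f) = c_{(A, \lambda)}(0, f)$. On the other hand, (\ref{e:borel}) gives $c_{(AA', \lambda)}(0, f) = \sign(\N d)^k c_{(A, \lambda)}(0, f)$, and since $k$ is odd and $\N d < 0$ the factor $\sign(\N d)^k$ equals $-1$. Comparing the two expressions forces $c_\cA(0, f) = -c_\cA(0, f)$, hence $c_\cA(0, f) = 0$.

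There is no real obstacle here; the entire substance has been absorbed into Theorem~\ref{t:admissible} and the computation of the sign in (\ref{e:borel}). The only point meriting a moment's care is matching the direction of the implication: Theorem~\ref{t:admissible} supplies exactly an element $A' \in B$ with $\N d < 0$ stabilizing the class of $A$, which is precisely the hypothesis needed to run the sign cancellation above.
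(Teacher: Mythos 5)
Your proof is correct and is essentially identical to the paper's own argument: both invoke Theorem~\ref{t:admissible} to produce $A' \in B$ with $\N d < 0$ fixing the class of $A$, then combine the left-$\Gamma_{1,\lambda}(\fn)$-invariance of $c_\cA(0,f)$ with the sign relation (\ref{e:borel}) to conclude $c_\cA(0,f) = -c_\cA(0,f) = 0$.
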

\begin{proof} By Theorem~\ref{t:admissible}, if $\cA = (A, \lambda)$ is not admissible then
the class of $A$ in $\Gamma_{1, \lambda}(\fn) \backslash \GL_2^+(F)$ is fixed by right multiplication by an element $A' \in B$ with $\N d < 0$.  Then $c_{\cA}(0, f) = c_{(AA', \lambda)}(0, f)$, but by (\ref{e:borel}) we also have $c_{\cA}(0, f) = -c_{(AA', \lambda)}(0, f)$.  The result follows.
\end{proof}

\subsection{Definition of cusps} 

For $\Gamma \subset \Gamma_{0,\lambda}(\fn)$ any congruence subgroup, the associated set of cusps
 is by definition the finite set
\begin{equation} \label{e:cuspsdef}
\cusps(\Gamma) := \Gamma \backslash \GL_2^+(F) / \left\{ \mat{a}{b}{0}{d} \in \GL_2^+(F) \right\} \leftrightarrow \Gamma \backslash \PP^1(F). 
\end{equation}
The bijection in (\ref{e:cuspsdef}) is $\mat{\alpha}{\beta}{\gamma}{\delta} \rightarrow (\alpha:\gamma)$.  We define 
\begin{equation} \label{e:disjoint} \cusps(\fn) = \bigsqcup_{\lambda} \cusps(\Gamma_{1,\lambda}(\fn)). 
\end{equation}
An ordered pair $\cA = (A, \lambda)$ with $A \in \GL_2^+(F)$ and $\lambda \in \Cl^+(F)$  gives rise to 
an element of $\cusps(\fn)$ by considering the image of $A$ in $\cusps(\Gamma_{1,\lambda}(\fn))$ in the $\lambda$-component of the disjoint union (\ref{e:disjoint}).  The cusp represented by $\cA$ will be denoted $[\cA] \in \cusps(\fn)$.

  \begin{definition}  \label{d:fqdef}
  For each $\fm \mid \fn$, we define $\fQ_{\fm, \fn}$ to be the set of cusps $[\cA]  \in \cusps(\fn)$ such that $\fm_\cA= \fm$. 
  \end{definition}
  
  The set of admissible cusps is defined by
  \[ \cusps^*(\fn) = \bigsqcup_{\substack{\fm \mid \fn \\ (\fm, \fn) \text{ admissible}}}\fQ_{\fm, \fn} = \{ [\cA] \colon \cA \text{ is admissible} \}.\]

There is a canonical action of the diamond operators on $\cusps(\fn)$ that is compatible with its action on modular forms.  Given an integral ideal $\fm$ coprime with $\fn$ and a cusp $\cA = (A, \mu)$, we define $\lambda$ and $\alpha_\mu$ as in (\ref{e:alphamudef}) and define
\[ S(\fm) \cA = \cA' = (A', \lambda), \qquad \text{where} \qquad A' = \alpha_\mu A. \]

\begin{prop}
Each set $\fQ_{\fm, \fn}$ is invariant under the action of $G_\fn^+$ via the diamond operators.
\end{prop}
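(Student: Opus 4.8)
The plan is to reduce the Proposition to the pointwise statement that the diamond operators preserve the invariant $\fm$: namely, for every cusp $\cA=(A,\mu)$ and every integral ideal $\fr$ coprime to $\fn$, one has $\fm_{S(\fr)\cA}=\fm_\cA$. Granting this, $S(\fr)$ carries $\fQ_{\fm,\fn}$ into itself for each $\fm\mid\fn$; since $\cusps(\fn)=\bigsqcup_{\fm\mid\fn}\fQ_{\fm,\fn}$ and the $S(\fr)$ realize an action of the group $G_\fn^+$ on the finite set $\cusps(\fn)$, a bijection carrying every block of a partition into itself carries each block onto itself. So everything comes down to the identity $\fm_{S(\fr)\cA}=\fm_\cA$.

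To prove it, write $(a,c)$ for the first column of $A$ and let $\alpha_\mu=\mat{a_0}{b_0}{c_0}{d_0}$ be a matrix satisfying the conditions of (\ref{e:alphamudef}), with $x=\det\alpha_\mu$ and $(x)=\ft_\lambda\ft_\mu^{-1}\fr^2$, so that $\cA':=S(\fr)\cA=(\alpha_\mu A,\lambda)$ has first column $(a',c')=(a_0a+b_0c,\ c_0a+d_0c)$. The first step is to show $\fb_{\cA'}=\fr\,\fb_\cA$. Using the divisibilities of (\ref{e:alphamudef}) together with the identity $c\,(\ft_\mu\fd)^{-1}=\fc_\cA\fb_\cA$, one checks that $a'\cO_F$ and $c'(\ft_\lambda\fd)^{-1}$ are both contained in $\fr\,\fb_\cA$; the reverse inclusion follows by applying the same estimates to the first column of $A=\alpha_\mu^{-1}(\alpha_\mu A)$, where $\alpha_\mu^{-1}=x^{-1}\mat{d_0}{-b_0}{-c_0}{a_0}$, again using $(x)=\ft_\lambda\ft_\mu^{-1}\fr^2$. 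In particular $\ord_\fp(\fb_{\cA'})=\ord_\fp(\fb_\cA)$ for every prime $\fp\mid\fn$.

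The heart of the argument is the local comparison of $\fc_{\cA'}=c'(\ft_\lambda\fd\fb_{\cA'})^{-1}$ with $\fc_\cA$ at a prime $\fp\mid\fn$; at $\fp\nmid\fn$ nothing is needed since $\fm_\cA$ and $\fm_{\cA'}$ both divide $\fn$. Because the chosen $\ft_\lambda,\ft_\mu$ are coprime to $\fn$ and $\fp\nmid\fr$, we have $\ord_\fp(x)=0$, so the congruence $d_0\equiv x\pmod{\ft_\lambda\ft_\mu^{-1}\fr\fn}$ forces $\ord_\fp(d_0)=0$, while (\ref{e:alphamudef}) gives $\ord_\fp(c_0)\ge\ord_\fp(\fd\fn)$. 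Substituting $\ord_\fp(c)=\ord_\fp(\fc_\cA)+\ord_\fp(\fd\fb_\cA)$ and $\ord_\fp(a)=\ord_\fp(\fa_\cA\fb_\cA)$ into $c'=c_0a+d_0c$ yields $\ord_\fp(d_0c)=\ord_\fp(\fc_\cA)+\ord_\fp(\fd\fb_\cA)$ and $\ord_\fp(c_0a)\ge\ord_\fp(\fn)+\ord_\fp(\fa_\cA)+\ord_\fp(\fd\fb_\cA)$. Now split on the size of $\ord_\fp(\fc_\cA)$ relative to $\ord_\fp(\fn)$, using that $\fa_\cA$ and $\fc_\cA$ are coprime: if $\ord_\fp(\fc_\cA)<\ord_\fp(\fn)$ then $\ord_\fp(d_0c)<\ord_\fp(c_0a)$, so there is no cancellation in the sum, $\ord_\fp(c')=\ord_\fp(d_0c)$, and hence (using $\ord_\fp(\fb_{\cA'})=\ord_\fp(\fb_\cA)$) $\ord_\fp(\fc_{\cA'})=\ord_\fp(c')-\ord_\fp(\fd\fb_\cA)=\ord_\fp(\fc_\cA)$; if $\ord_\fp(\fc_\cA)\ge\ord_\fp(\fn)$ then both summands have $\fp$-valuation $\ge\ord_\fp(\fn)+\ord_\fp(\fd\fb_\cA)$, so $\ord_\fp(\fc_{\cA'})\ge\ord_\fp(\fn)$. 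In either case $\min(\ord_\fp(\fc_{\cA'}),\ord_\fp(\fn))=\min(\ord_\fp(\fc_\cA),\ord_\fp(\fn))$, that is $\ord_\fp(\fm_{\cA'})=\ord_\fp(\fm_\cA)$; running over all $\fp\mid\fn$ gives $\fm_{\cA'}=\fm_\cA$.

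The main obstacle I anticipate is purely organizational: carrying the fractional ideals $\ft_\lambda,\ft_\mu,\fd,\fb_\cA$ faithfully through the two matrix products, and isolating the one place an unexpected cancellation could occur, namely in the sum $c'=c_0a+d_0c$. That place is handled exactly by the two inputs used above—the coprimality of $\fa_\cA$ and $\fc_\cA$, which bounds $\ord_\fp(a)$, and the fact that $d_0$ is a unit at every prime dividing $\fn$, which is forced by the congruence condition in (\ref{e:alphamudef}). A minor point worth noting is that the computation uses only the defining properties (\ref{e:alphamudef}) of $\alpha_\mu$, so it is automatically independent of the choice of $\alpha_\mu$.
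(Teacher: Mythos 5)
Your proof is correct, and while it follows the same skeleton as the paper's --- first show $\fb_{\cA'}=\fr\,\fb_\cA$, then analyze the lower-left entry $c'=c_0a+d_0c$ of $\alpha_\mu A$ --- the decisive step is handled by a genuinely different mechanism. The paper establishes only the single inclusion $\fm_{\cA'}\subset\fm_\cA$, via a short chain of global ideal containments: the term $c_0a(\fd\ft_\lambda)^{-1}$ is absorbed into $\fn\fb_{\cA'}$ because $c_0\in\ft_\lambda\fd\fn\fr$, and the term $d_0c(\fd\ft_\lambda)^{-1}$ lands in $\fc_\cA\fb_{\cA'}$ because $d_0\in\ft_\lambda\ft_\mu^{-1}\fr$; the reverse inclusion is then obtained for free by observing that the diamond operators form a group action, so one may run the same bound with an inverse class and swap $\cA$ and $\cA'$. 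You instead prove the equality $\fm_{\cA'}=\fm_\cA$ outright by a prime-by-prime valuation analysis of the sum $c_0a+d_0c$, using that the congruence in (\ref{e:alphamudef}) forces $d_0$ to be a unit at each $\fp\mid\fn$ and that no cancellation can occur when $\ord_\fp(\fc_\cA)<\ord_\fp(\fn)$ (while any cancellation in the remaining case is harmless since both summands already have valuation at least $\ord_\fp(\fn)+\ord_\fp(\fd\fb_\cA)$). The paper's route is shorter and sidesteps the case analysis entirely --- cancellation in $c'$ could only make $\fm_{\cA'}$ larger, and the symmetry argument rules that out --- whereas yours is self-contained for a single operator $S(\fr)$, never invokes the group structure in an essential way, and pinpoints exactly where cancellation could occur and why it does not matter. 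Both are complete; your local computation even yields the slightly finer statement that $\ord_\fp(\fc_{\cA'})=\ord_\fp(\fc_\cA)$ exactly at primes with $\ord_\fp(\fc_\cA)<\ord_\fp(\fn)$.
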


\begin{proof}  With notation as above, one checks directly from the definitions that $\fb_{\cA'} = \fb_\cA \fm$.  Furthermore one calculates that
\begin{align*}
 \fm_{\cA'} \fb_{\cA'} &= \gcd( (c\alpha + d\gamma)(\fd \ft_\lambda)^{-1} , \fn \fb_{\cA'})  \\
&= \gcd(d\gamma(\fd \ft_\lambda)^{-1} , \fn \fb_{\cA'}) \\
& \subset \gcd(\gamma(\fd \ft_\mu)^{-1} \fm, \fn \fb_{\cA'}) \\
&= \gcd(\fc_\cA \fb_\cA \fm, \fn \fb_{\cA'}) \\
&= \fm_\cA \fb_{\cA'}.
\end{align*}
Therefore $\fm_{\cA'} \subset \fm_\cA$.  However, this is a group action; replacing $\fm$ by an ideal whose image in $G_\fn^+$ is inverse to $\fm$ and switching the roles of $\cA$ and  $\cA'$, we find the reverse inclusion  $\fm_{\cA} \subset \fm_{\cA'}$.  Hence $\fm_{\cA} = \fm_{\cA'}$, and the result follows.
\end{proof}

\subsection{Enumeration of cusps} \label{s:enumerate}

   We now enumerate $\cusps(\fn)$ and more specifically each subset $\fQ_{\fm, \fn}$.
Recall the set of 4-tuples $\cP_{\lambda}(\fn)$ defined in Definition~\ref{d:pl}.
For each ideal $\fm \mid \fn$, let
$\cP_{\lambda}(\fm, \fn) \subset \cP_\lambda(\fn)$ denote the set of tuples whose second coordinate is $\fm$.  There is a natural action of $F^*$ on $\cP_\lambda(\fn)$ that preserves each $\cP_{\lambda}(\fm, \fn)$, given by
\[ x \cdot (\fb, \fm, a, c) = (\fb(x), \fm, ax, cx). \]

The following is an immediate corollary of Lemma~\ref{l:bijection}.
\begin{corollary} \label{c:cusps} For each class $\lambda \in G_1^+$, there is a canonical bijection
\[  \Gamma_{1, \lambda}(\fn) \backslash \PP^1(F)
 \longrightarrow  P_\lambda(\fn)/F^*.
\] 
There are canonical bijections
\begin{align} \label{e:p1p}
 \fQ_{\fm, \fn} \longrightarrow & \bigsqcup_{\lambda \in \Cl^+(F)} P_\lambda(\fm, \fn)/F^*, \\
\cusps(\fn) 
 \longrightarrow &  \bigsqcup_{\lambda \in \Cl^+(F)} P_\lambda(\fn)/F^*.  \nonumber \label{e:cusps}
  \end{align} 
\end{corollary}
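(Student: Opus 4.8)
The plan is to deduce all three bijections directly from Lemma~\ref{l:bijection} by passing to $F^*$-quotients. Recall that $\PP^1(F)$ is the quotient of $F^2 \setminus (0,0)$ by the scaling action $(a,c) \mapsto (ax,cx)$ of $F^*$, that this action commutes with the left action of $\Gamma_{1,\lambda}(\fn)$ by matrix multiplication on column vectors, and that under the identification in (\ref{e:cuspsdef}) the cusp $(\alpha:\gamma)$ is the image of the first column $(\alpha,\gamma)$. Hence
\[ \Gamma_{1,\lambda}(\fn) \backslash \PP^1(F) \;=\; \bigl(\Gamma_{1,\lambda}(\fn) \backslash (F^2 \setminus (0,0))\bigr) / F^*. \]

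First I would check that the bijection $\varphi$ of Lemma~\ref{l:bijection} is $F^*$-equivariant for the action $x \cdot (\fb, \fm, a, c) = (\fb(x), \fm, ax, cx)$ on $\cP_\lambda(\fn)$ defined just above the corollary. For $x \in F^*$, replacing the column $(a,c)$ by $(ax,cx)$ replaces $\fb_\cA = a\cO_F + c(\ft_\lambda\fd)^{-1}$ by $x\fb_\cA = \fb_\cA(x)$, replaces $\fc_\cA = c(\ft_\lambda\fd\fb_\cA)^{-1}$ by itself, and therefore leaves $\fm_\cA = \gcd(\fc_\cA,\fn)$ unchanged; meanwhile the classes $\overline{a} \in \cR^{\fb_\cA}_{\fm_\cA}$ and $\overline{c} \in \cR^{\fb_\cA \fd \ft_\lambda \fm_\cA}_{\fn/\fm_\cA}$ are carried to $\overline{ax}$ and $\overline{cx}$ in the correspondingly scaled quotients. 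This is the only computation in the proof, and it is routine. Since an equivariant bijection of $G$-sets induces a bijection of orbit spaces, $\varphi$ descends to a canonical bijection $\Gamma_{1,\lambda}(\fn)\backslash\PP^1(F) \overset{\sim}{\longrightarrow} \cP_\lambda(\fn)/F^*$, which is the first assertion.

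For the remaining two bijections I would simply take disjoint unions. By (\ref{e:disjoint}) we have $\cusps(\fn) = \bigsqcup_\lambda \Gamma_{1,\lambda}(\fn)\backslash\PP^1(F)$, so summing the first bijection over all $\lambda \in \Cl^+(F)$ gives the bijection $\cusps(\fn) \overset{\sim}{\longrightarrow} \bigsqcup_\lambda \cP_\lambda(\fn)/F^*$. Moreover, since $\varphi$ carries the invariant $\fm_\cA$ to the ($F^*$-invariant) second coordinate $\fm$, and since $\cP_\lambda(\fn) = \bigsqcup_{\fm\mid\fn}\cP_\lambda(\fm,\fn)$, this bijection restricts on the locus $\fm_\cA = \fm$ to a bijection $\fQ_{\fm,\fn} \overset{\sim}{\longrightarrow} \bigsqcup_\lambda \cP_\lambda(\fm,\fn)/F^*$, as claimed. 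The only point requiring any care is the equivariance verification above—in particular the $F^*$-invariance of $\fm_\cA$—and everything else is formal; I do not anticipate a serious obstacle.
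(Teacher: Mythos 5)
Your proposal is correct and matches the paper's intent exactly: the paper states this as an immediate consequence of Lemma~\ref{l:bijection}, and the argument is precisely the $F^*$-equivariance of $\varphi$ (with the key point being the invariance of $\fc_\cA$ and hence of $\fm_\cA$ under scaling), followed by passing to orbit spaces, summing over $\lambda$, and restricting to the strata $\fm_\cA = \fm$. No gaps.
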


\begin{definition}
Let $Q_{\fm, \fn}$ denote the quotient of $G_{\fm}^+ \times G_{\fn/\fm}^+$ by the subgroup generated by diagonally embedded principal ideals $(x)$, where $x \in \cO_F$ is congruent to $1$ modulo $\fn$. 
\end{definition}

\begin{corollary} \label{c:cuspsize}
We have $ \#\fQ_{\fm, \fn} = \#Q_{\fm, \fn}$, hence $\# \cusps(\fn) = \sum_{\fm | \fn} \#Q_{\fm, \fn}.$
\end{corollary}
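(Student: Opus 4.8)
The plan is to reduce everything to an explicit bijection. First, the second (``hence'') statement follows formally from the first: by Definition~\ref{d:fqdef} every cusp $[\cA]$ lies in exactly one $\fQ_{\fm,\fn}$, namely the one with $\fm=\fm_\cA=\gcd(\fc_\cA,\fn)$ — this ideal depends only on $[\cA]$, being the second coordinate of the datum $\varphi(a,c)$ of Lemma~\ref{l:bijection}, which is $\Gamma_{1,\lambda}(\fn)$-invariant, and the $F^*$-action on $P_\lambda(\fn)$ preserves each $P_\lambda(\fm,\fn)$. Hence $\cusps(\fn)=\bigsqcup_{\fm\mid\fn}\fQ_{\fm,\fn}$ and $\#\cusps(\fn)=\sum_{\fm\mid\fn}\#\fQ_{\fm,\fn}$. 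So it remains to prove $\#\fQ_{\fm,\fn}=\#Q_{\fm,\fn}$, and by Corollary~\ref{c:cusps} I would do this by constructing a bijection
\[ \Psi\colon Q_{\fm,\fn}\ \longrightarrow\ \bigsqcup_{\lambda\in\Cl^+(F)}P_\lambda(\fm,\fn)/F^*. \]

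To build $\Psi$, I would represent a class in $Q_{\fm,\fn}$ by a pair of integral ideals $(\fa_0,\fg_0)$ with $\fa_0$ prime to $\fm$ and $\fg_0$ prime to $\fn/\fm$, and put $\lambda:=[\fa_0\fg_0^{-1}(\fd\fm)^{-1}]\in\Cl^+(F)$. This is the unique narrow class for which an ideal $\fb$ in the narrow class $-[\fa_0]$ makes \emph{both} $\fa_0\fb$ and $\fg_0\fb\fd\ft_\lambda\fm$ trivial in $\Cl^+(F)$, hence each principal with a totally positive generator. Choosing such a $\fb$ and writing $\fa_0\fb=(\alpha)$, $\fg_0\fb\fd\ft_\lambda\fm=(\gamma)$ with $\alpha,\gamma\gg0$ (each determined up to $\cO_{F,+}^*$), I would let $\overline\alpha\in\cR^{\fb}_{\fm}$ and $\overline\gamma\in\cR^{\fb\fd\ft_\lambda\fm}_{\fn/\fm}$ be their classes and set $\Psi([(\fa_0,\fg_0)]):=[(\fb,\fm,\overline\alpha,\overline\gamma)]$.

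The work is then to check that $\Psi$ is a well-defined bijection. For well-definedness one must see that the output $F^*$-orbit is unaffected by: (i) a different choice of $\fb$ in the narrow class $-[\fa_0]$ — any two differ by a totally positive principal ideal $(x)$, and translating the whole tuple by $x\in F^*$ matches them; (ii) replacing $\fa_0$ by $(\delta)\fa_0$ with $\delta\equiv1\pmod\fm$ totally positive (and symmetrically for $\fg_0$) — a valuation computation gives $\delta\alpha\equiv\alpha\pmod{\fb\fm}$, so $\overline\alpha$ does not change; and (iii) replacing $(\fa_0,\fg_0)$ by $((x)\fa_0,(x)\fg_0)$ for $x\in\cO_F$ with $x\equiv1\pmod\fn$. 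In case (iii), since $[(x)]$ is $2$-torsion in $\Cl^+(F)$ one may take the new $\fb$ to be $(x)\fb$; the new totally positive generators become $x^2\alpha$ and $x^2\gamma$, and $x\equiv1\pmod\fn$ forces $x^2\alpha\equiv x\alpha\pmod{(x)\fb\fm}$ (and similarly for $\gamma$), so the new tuple is $x\cdot(\fb,\fm,\overline\alpha,\overline\gamma)$. For surjectivity, given $(\fb,\fm,\overline\alpha,\overline\gamma)\in P_\lambda(\fm,\fn)$ I would lift $\overline\alpha,\overline\gamma$ to \emph{totally positive} $\alpha\in\fb$, $\gamma\in\fb\fd\ft_\lambda\fm$ — possible since every coset of a full lattice in $F\otimes\R$ meets the totally positive cone — and verify that $\Psi$ returns the given class with $\fa_0=\alpha\fb^{-1}$, $\fg_0=\gamma(\fb\fd\ft_\lambda\fm)^{-1}$. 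For injectivity, if the tuples attached to $(\fa_0,\fg_0)$ and $(\fa_0',\fg_0')$ are related by some $x\in F^*$, then comparing totally positive generators gives $\fa_0'=(\delta_1)\fa_0$ and $\fg_0'=(\delta_2)\fg_0$ with $\delta_1\equiv1\pmod\fm$, $\delta_2\equiv1\pmod{\fn/\fm}$ and, crucially, $\sgn\delta_1=\sgn\delta_2=\sgn x$; picking $y\in\cO_F$ with $y\equiv1\pmod\fn$ and $\sgn y=\sgn x$ yields $([(\delta_1)],[(\delta_2)])=([(y)],[(y)])$ in $G_\fm^+\times G_{\fn/\fm}^+$, so $(\fa_0',\fg_0')$ and $(\fa_0,\fg_0)$ represent the same class in $Q_{\fm,\fn}$.

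I expect the hard part to be exactly this sign bookkeeping, in case (iii) of well-definedness and in the injectivity step. The naive invariant attached to a cusp — morally the pair $([\fa_\cA],[\fc_\cA/\fm_\cA])$ — is only well-defined in the \emph{wide} ray class groups $G_\fm\times G_{\fn/\fm}$, which is too coarse; forcing $\alpha$ and $\gamma$ to be totally positive is precisely what ties the two individual sign ambiguities into a single \emph{diagonal} one, and that diagonal ambiguity is exactly what is divided out in the definition of $Q_{\fm,\fn}$.
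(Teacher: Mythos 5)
Your proof is correct, but it takes a genuinely different route from the paper's. The paper never constructs a bijection between $\fQ_{\fm,\fn}$ and $Q_{\fm,\fn}$: it counts each side separately and observes that the answers agree. Concretely, it fibres $P_\lambda(\fm,\fn)/F^*$ over $\Cl(F)$ via $(\fb,a,c)\mapsto[\fb]$, identifies each fibre as a principal homogeneous space for $(\cR_{\fm}\times\cR_{\fn/\fm})/\cU$ with $\cU$ the diagonal image of $\cO_F^*$, and then exhibits $Q_{\fm,\fn}$ as an extension of $\Cl^+(F)\times\Cl(F)$ by that same group, so that both cardinalities equal $h^+h\cdot\#\bigl((\cR_{\fm}\times\cR_{\fn/\fm})/\cU\bigr)$. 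Your explicit map $\Psi$ proves strictly more --- a canonical bijection rather than a mere equality of cardinalities --- at the price of the sign bookkeeping you flag: the choice of totally positive generators, the $2$-torsion trick for the diagonal relation $(x)$ with $x\equiv1\pmod{\fn}$, and the reduction of the two sign ambiguities to a single diagonal one in the injectivity step. All of these verifications check out (in particular the congruences $\delta\alpha\equiv\alpha\pmod{\fb\fm}$ and $x^2\alpha\equiv x\alpha\pmod{(x)\fb\fm}$ are exactly right, and the existence of $y\equiv1\pmod{\fn}$ with prescribed signs is standard). What your approach buys is a functorial identification that could, for instance, make the $G_\fn^+$-action on $\fQ_{\fm,\fn}$ explicit on the $Q_{\fm,\fn}$ side; what the paper's approach buys is brevity, since the sign issues are absorbed once and for all into the single quotient $(\cR_{\fm}\times\cR_{\fn/\fm})/\cU$ appearing on both sides of the count.
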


\begin{proof} 
From (\ref{e:p1p}) we have $\#\fQ_{\fm, \fn} = h^+ \#(P_\lambda(\fm, \fn)/F^*)$, where $h^+ = \#\Cl^+(F)$.
There is a surjective map \begin{equation} \label{e:plproj}
 P_\lambda(\fm, \fn)/F^* \longrightarrow \Cl(F), \qquad (\fb, a, c) \mapsto [\fb]. \end{equation}
If $\cU$ denotes the image of $\cO_F^*$ mapped diagonally to $\cR_{\fm} \times \cR_{\fn/\fm}$, then the fiber over a point in
(\ref{e:plproj}) is a principal homogeneous space for $(\cR_{\fm} \times \cR_{\fn/\fm})/\cU$.  Therefore
\begin{equation} \label{e:qmn}
\#\fQ_{\fm, \fn} = (h^+ h) \cdot \#((\cR_{\fm} \times \cR_{\fn/\fm})/\cU), \end{equation}
where $h=\#\Cl(F)$. 
Meanwhile, there is an exact sequence
\begin{equation} \label{e:qseq}
 1 \rightarrow (\cR_{\fm} \times \cR_{\fn/\fm})/\cU \rightarrow Q_{\fm, \fn} \rightarrow \Cl^+(F) \times \Cl(F) \rightarrow 1, 
 \end{equation}
 where the second nontrivial arrow is $([\fa], [\fb]) \mapsto ([\fa/\fb], [\fa])$.
From (\ref{e:qseq})  we deduce that $\#Q_{\fm, \fn}$ also equals the right side of (\ref{e:qmn}), completing the proof of the corollary.
\end{proof}

\subsection{Constant term map} \label{s:ctm}

If $k$ is even we define
\[  C_k = \bigoplus_{\cusps(\fn)} \C.\] 
Then (\ref{e:borel}) implies that we have a well-defined constant term map
 \begin{equation} \label{e:condef}
 \con_k \colon M_k(\fn) \longrightarrow C_k, \qquad f \mapsto (c_{\cA}(0, f))_{[\cA]}. 
 \end{equation}

For $k$ odd we must deal with the sign ambiguity in (\ref{e:borel}). 
For $k$ an odd integer, let
\[ \tilde{C}_k = \bigoplus_{\lambda \in \Cl^+(F)} \  \bigoplus_{\substack{\cA \in \Gamma_{1, \lambda}(\fn) \backslash \GL_2^+(F) \\ \cA \text{ admissible}}} \C. \]
 Endow $\tilde{C}_k$ with a right action of the upper triangular Borel $B$ by \[ [A] \cdot A' \mapsto [AA']\sign(\N d). \]
We let \[ C_k = H_0(B, \tilde{C}_k) = \tilde{C}_k/\langle c \cdot b - c\colon c \in \tilde{C}_k, b \in B \rangle. \]

Of course, Theorem~\ref{t:admissible} implies that
\[ C_k \cong \bigoplus_{\cusps^*(\fn)} \C. \]
However, fixing such an isomorphism requires making a non-canonical choice, which we would like to avoid. For positive odd $k$ we again have a canonical constant term map
\begin{equation} \label{e:condef2}
 \con_k \colon M_k(\fn) \longrightarrow C_k
 \end{equation}
that sends a modular form $f = (f_\lambda)$ to the tuple of normalized constant terms $c_{\cA}(0,f)$.  The discussion above implies that this map is well-defined.

\subsection{Cusps above $\infty$ and $0$}

We introduce the suggestive notation \[ C_\infty(\fn) =  \fQ_{\fn, \fn} =  \{ [\cA] \in \cusps(\fn) \colon \fn \mid \fc_\cA\}. \]
 This is the smallest set of cusps containing the cusps $\infty \in \Gamma_{1, \lambda}(\fn) \backslash \PP^1(F)$ for each $\lambda \in \Cl^+(F)$ that is stable under the action of the diamond operators $S(\fm)$.
It follows from Corollary~\ref{c:cuspsize} that $\#C_\infty(\fn) = h^+ h_\fn$, where $ h_\fn = \#G_\fn $, the size of the wide ray class group of conductor $\fn.$ 
Similarly, let
 \[ C_0(\fn) =  \fQ_{1, \fn} =  \{ [\cA] \in \cusps(\fn) \colon \gcd(\fn, \fc_\cA) = 1 \}. \]
 This is the smallest $G_\fn^+$-invariant set of cusps containing $0 \in \Gamma_{1, \lambda}(\fn) \backslash \PP^1(F)$ for each  $\lambda \in \Cl^+(F)$.

If $\fn = 1$, then $\cusps(\fn) = C_\infty(\fn) = C_0(\fn)$.  If $\fn$ is prime, then $\cusps(\fn)$ is a disjoint union of $C_\infty(\fn)$ and $C_0(\fn)$.  In general there will be more cusps.

If $\fb$ is a divisor of $\fn$, then for $* = 0, \infty$ we define $C_{*}(\fb, \fn)$ to be all elements in $\cusps(\fn)$ whose image under the canonical map $\cusps(\fn) \rightarrow \cusps(\fb)$ lies in $C_{*}(\fb)$, i.e.\ 
\[ 
C_{\infty}(\fb, \fn )  = \ \bigsqcup_{\fb \mid \fm \mid \fn}  \mathfrak{Q}_{\fm, \fn}, \qquad C_0(\fb, \fn) =  \ \bigsqcup_{\substack{\fm | \fn \\ (\fb, \fm) =1}} \mathfrak{Q}_{\fm, \fn}.
\]

\subsection{Lemma on level raising and cusps}

The following remark will be used in later computations.

\begin{lemma} \label{l:fq}
Let $f \in M_k(\fn)$ and let $\fq \nmid \fn$ be a prime ideal.  Let $\cA = (A, \lambda)$ represent a cusp
$[\cA] \in C_\infty(\fq\fm, \fq\fn)$ for some $\fm \mid \fn$.  There exists a pair $\cA' = (A', \mu)$ such that $[\cA'] \in C_\infty(\fm, \fn)$ and \begin{equation} \label{e:cfq}
 c_\cA(0,f|\fq) = c_{\cA'}(0, f) \end{equation}
\end{lemma}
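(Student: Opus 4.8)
The plan is to unwind the definition of the level-raising operator $f \mapsto f|\fq$ from \S\ref{s:level} and match it against the explicit description of the cusps in $C_\infty$ provided by Corollary~\ref{c:cusps} and the definition of $\fQ_{\fm,\fn}$. Recall from \eqref{e:lrdef} that for each class $\lambda$ there is a class $\mu \in \Cl^+(F)$ and a totally positive $a_\mu \in F$ with $\fq \ft_\lambda = (a_\mu)\ft_\mu$, and $(f|\fq)_\lambda = \N\fq^{-k/2} f_\mu|_{\left(\begin{smallmatrix} a_\mu & 0 \\ 0 & 1\end{smallmatrix}\right)}$. So given $\cA = (A, \lambda)$ with $A = \left(\begin{smallmatrix} a & b \\ c & d\end{smallmatrix}\right)$, the Fourier expansion of $(f|\fq)_\lambda|_{A,k}$ equals $\N\fq^{-k/2}$ times that of $f_\mu|_{D A, k}$, where $D = \left(\begin{smallmatrix} a_\mu & 0 \\ 0 & 1\end{smallmatrix}\right)$. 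I would therefore set $A' = DA = \left(\begin{smallmatrix} a_\mu a & a_\mu b \\ c & d\end{smallmatrix}\right)$ and $\cA' = (A', \mu)$, and then need to (i) verify the constant-term identity \eqref{e:cfq} after accounting for all the normalization factors in \eqref{e:cadef}, and (ii) check that $[\cA'] \in C_\infty(\fm, \fn)$, i.e.\ that $\fm_{\cA'} = \gcd(\fc_{\cA'}, \fn)$ is divisible by $\fm$ (equivalently, lies in $\bigsqcup_{\fm \mid \fm' \mid \fn}\fQ_{\fm',\fn}$ — here I should be slightly careful whether the lemma wants $\fm_{\cA'} = \fm$ exactly, which would follow from the same gcd bookkeeping, or just membership in $C_\infty(\fm,\fn)$; the statement as written only asserts membership).

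For step (i), I would compute the ideals attached to $\cA'$ over the field $\cO_F$: we have $\fb_{\cA'} = a_\mu a\,\cO_F + c(\ft_\mu \fd)^{-1}$. Using $(a_\mu) = \fq\ft_\lambda\ft_\mu^{-1}$ one finds $\fb_{\cA'} = a_\mu\bigl(a\,\cO_F + a_\mu^{-1}c(\ft_\mu\fd)^{-1}\bigr) = a_\mu\bigl(a\,\cO_F + c(\fq\ft_\lambda\fd)^{-1}\bigr)$. Since $[\cA] \in C_\infty(\fq\fm, \fq\fn)$ means $\fq\fm \mid \fc_\cA$, in particular $\fq \mid \fc_\cA = c(\ft_\lambda\fd\fb_\cA)^{-1}$, so $c(\fq\ft_\lambda\fd)^{-1} = \fq^{-1}\fc_\cA\fb_\cA \subseteq \fb_\cA$ and in fact $a\,\cO_F + c(\fq\ft_\lambda\fd)^{-1} = \fb_\cA$ because $\fa_\cA + \fq^{-1}\fc_\cA = \cO_F$ (as $\fa_\cA, \fc_\cA$ are coprime and $\fq \mid \fc_\cA$). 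Hence $\fb_{\cA'} = a_\mu \fb_\cA$, $\N\fb_{\cA'} = \N\fq \cdot \N\ft_\lambda\ft_\mu^{-1} \cdot \N\fb_\cA$, $\det A' = a_\mu\det A$, and $\N\ft_\mu = \N\fq\,\N\ft_\lambda \cdot |{\N(a_\mu)}|^{-1}$; plugging these into \eqref{e:cadef} and comparing with the $\N\fq^{-k/2}$ prefactor, all the $a_\mu$- and $\N\fq$-contributions should cancel, yielding $c_\cA(0, f|\fq) = c_{\cA'}(0,f)$ exactly (possibly up to the harmless sign in \eqref{e:borel}, which I would absorb by an appropriate choice of representative as in the proof of Lemma~\ref{l:bijection}). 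For step (ii), from $\fb_{\cA'} = a_\mu\fb_\cA$ one computes $\fc_{\cA'} = c(\ft_\mu\fd\fb_{\cA'})^{-1} = c(\fq^{-1}\ft_\lambda\fd\fb_\cA)^{-1} = \fq\,\fc_\cA$, so $\fm_{\cA'} = \gcd(\fq\fc_\cA, \fn)$; since $\fq \nmid \fn$ this equals $\gcd(\fc_\cA, \fn)$, and $\fq\fm \mid \fc_\cA$ gives $\fm \mid \gcd(\fc_\cA, \fn) = \fm_{\cA'}$, so $[\cA'] \in C_\infty(\fm, \fn)$ as required.

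The routine but slightly delicate part is the bookkeeping of the four normalization factors $(\N\ft_\lambda)^{-k/2}$, $(\N\fb_\cA)^{-k}$, $(\det A)^{k/2}$, and the $\N\fq^{-k/2}$ from \eqref{e:lrdef} — one has to confirm that the exponents conspire to cancel, which they do precisely because \eqref{e:cadef} was normalized to be scaling-invariant. I expect the \textbf{main obstacle} to be the verification that $a\,\cO_F + c(\fq\ft_\lambda\fd)^{-1} = \fb_\cA$ rather than a proper sub-ideal; this is exactly where the hypothesis $[\cA] \in C_\infty(\fq\fm, \fq\fn)$ (and not merely $[\cA] \in C_\infty(\fm, \fq\fn)$) is used, since it guarantees $\fq \mid \fc_\cA$ and hence that dividing the $c$-part by $\fq$ does not shrink the ideal generated. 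Once that is in hand, everything else is a direct comparison of Fourier expansions via the slash action, and the conclusion follows.
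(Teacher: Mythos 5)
Your proposal is correct and follows essentially the same route as the paper, which likewise takes $\cA' = (A',\mu)$ with $A' = \left(\begin{smallmatrix} a_\mu & 0 \\ 0 & 1\end{smallmatrix}\right)A$ and leaves the normalization bookkeeping as a direct calculation. One small slip: since $\ft_\mu \fd \fb_{\cA'} = \fq\,\ft_\lambda\fd\fb_\cA$, one gets $\fc_{\cA'} = \fq^{-1}\fc_\cA$ rather than $\fq\,\fc_\cA$, but this only makes your step (ii) more immediate ($\fq\fm \mid \fc_\cA$ gives $\fm \mid \fc_{\cA'}$ directly) and does not affect the conclusion.
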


\begin{proof}  Write $\cA=(A, \lambda)$.
By definition, 
\[ ((f|\fq)_\lambda)|_A = \N\fq^{-k/2} f_\mu|_{A'}, \quad \text{ where } A' = \mat{a_\mu \alpha}{a_\mu \beta}{\gamma}{\delta} \]
and $\fq \ft_\lambda = (a_\mu)\ft_\mu$. Let $\cA' = (A', \mu)$.  Then $[\cA'] \in C_\infty(\fm, \fn)$ and $c_\cA(0,f|\fq) = c_{\cA'}(0, f)$ are direct calculations using (\ref{e:lrdef}).
\end{proof}

\section{Eisenstein series} \label{s:eisenstein}

We recall the well-known Eisenstein series using \cite[\S2.2]{ddp} and \cite{al} as  convenient references. 
Let  $\eta, \chi$ be narrow ray class characters of modulus $\fa$, $\fb$, respectively, such that  $\eta\chi$ has sign $(k,k, \dotsc, k)$.
With the exception of the case $F=\Q, k=2, \eta^0 = \chi^0 = 1$, when there are convergence issues,
 there is an Eisenstein series $E_k(\eta, \chi) \in M_k(\fa\fb)$.   
Here $\eta$ and $\chi$ are not assumed to be primitive characters, and $\eta^0, \chi^0$ denote the primitive characters
 associated to $\eta, \chi$.  The Eisenstein series $E_k(\eta, \chi)$ has  $q$-expansion coefficients given by
\[ c(\fm, E_k(\eta, \chi)) = \sum_{\fr \mid \fm} \eta(\fm/\fr) \chi(\fr) \N\fr^{k-1}. \]
For $k > 1$, we have
\[  c_\lambda(0, E_k(\eta, \chi) ) = \begin{cases} 2^{-n} \eta^{-1}(\lambda) L(\chi \eta^{-1}, 1-k) & \text{ if } \fa = 1 \\
0 & \text{ if } \fa \neq 1.
\end{cases}
\]
For $k=1$, $\fa = 1$, and $\eta = 1$, we note that 
\[
c_\lambda(0, E_1(1, \chi)) = 2^{-n} \cdot \left\{ \begin{array}{cc} L(\chi, 0) & \text{ if } \fb \neq 1 \\
L(\chi, 0) + \chi^{-1}(\ft_{\lambda}) L(\chi^{-1}, 0) & \text{ if } \fb = 1
\end{array} \right.
\]
for all $\lambda \in \Cl^+(F)$.

Given a fixed level $\fn$, the Eisenstein subspace $E_k(\fn) \subset M_k(\fn)$ is defined to be subspace spanned by the Eisenstein series $E_k(\eta, \chi)|\fq$ where $\eta, \chi$ are primitive narrow ray class characters of conductor $\fa$, $\fb$, respectively, such that  $\eta\chi$ has sign $(k,k, \dotsc, k)$ and $\fa \fb \fq \mid \fn$.  An elementary argument using Hecke operators shows that for $k > 1$, these Eisenstein series are linearly independent (see \cite{al}*{Prop.\ 3.8}).  For $k=1$, we have $E_1(\eta, \chi)|\fq = E_1(\chi, \eta)|\fq$, and these equations generate the space of relations among the Eisenstein series.

The Eisenstein subspace is a complement to the space of cusp forms $S_k(\fn)$, i.e.\ we have \begin{equation}
 M_k(\fn) = E_k(\fn) \oplus S_k(\fn). \label{e:mes}
 \end{equation}
Furthermore, for $k \ge 2$ (excluding the case $F=\Q, k=2$) the restriction of the map $\con_k$ defined in (\ref{e:condef}) and (\ref{e:condef2}) to the subspace $E_k(\fn)$ is an isomorphism:
\begin{equation} \label{e:conke}
 \con_{k,E} \colon E_k(\fn) \xrightarrow{\sim} C_k. 
 \end{equation}
The results (\ref{e:mes}) and (\ref{e:conke}) are proven in \cite[Prop.\ 1.5]{wileseis} for weight $k=2$, and we sketch now a  proof in the general case.
Firstly, one can show  that (excluding the case $F = \Q, k=2$)  that there is an equality of dimensions in (\ref{e:conke}):
\begin{equation} \label{e:ecc}
 \dim_{\C} E_k(\fn) = \begin{cases}
 \#\cusps(\fn) = \sum_{\fm \mid \fn} \#Q_{\fm, \fn} \text{ if } k \ge 2 \text{ is even}, \\
\# \cusps^*(\fn) = \sum_{\fm \mid \fn}^* \#Q_{\fm, \fn}  \text{ if } k \ge 3 \text{ is odd}.  
 \end{cases} \end{equation}
To see this for $k$ even, note that $\#Q_{\fm, \fn}$ is the number of pairs $(\chi_1, \chi_2)$ where $\chi_1, \chi_2$ are ray class characters of  modulus $\fm, \fn/\fm$, respectively, such that $\chi_1 \chi_2$ is totally even. To such a pair we can associate the Eisenstein series $E_k(\chi_1^0, \chi_2^0)|_{\fm/\cond(\chi_1)}$.  Here 
$\chi_1^0$ and $\chi_2^0$ denote the primitive avatars of $\chi_1$ and $\chi_2$, respectively.  These Eisenstein series form the defining basis for $E_k(\fn)$.

For $k \ge 3$ odd, it is not hard to show that there exists a pair of characters $\chi_1, \chi_2$ of modulus $\fm, \fn/\fm$, respectively, with $\chi_1 \chi_2$ totally odd 
if and only if $(\fm, \fn)$ is an admissible pair.  
In this case, $\#Q_{\fm, \fn}$ is equal to the size of the set of such pairs $(\chi_1, \chi_2)$. 
The argument then continues as in the case for $k$ even, and we find that  the dimension of $E_k(\fn)$ is $\sum_{\fm \mid \fn}^* \#Q_{\fm, \fn}$.

With (\ref{e:ecc}) in hand, both (\ref{e:mes}) and (\ref{e:conke}) for $k \ge 2$ follow from the fact that \[ E_k(\fn) \cap S_k(\fn) = \{0\}, \] which is usually proven using the Petersson inner product (see for instance \cite{al}*{Prop.\ 3.9} or \cite{wileseis}*{Page 423}).

\subsection{Evaluation of constant terms of Eisenstein series}

Let $\chi_1$ and $\chi_2$ be ray class characters of modulus $\fa$ and $\fb$ and signatures $q_1$ and $q_2$, respectively. Put $\fn = \fa\fb$. Let $k$ be a positive integer and assume $q_1+q_2 \equiv (k,\ldots, k) \pmod{2}$. In this section we compute the constant terms of Eisenstein series $E_k(\chi_1, \chi_2)$ at various cusps $\cA = (A, \lambda)$, where $A = \begin{psmallmatrix}\alpha & *\\ \gamma & *  \end{psmallmatrix} \in \GL_2^+(F)$.

 We write $\fa_0 = \cond(\chi)$, $\fb_0=\cond(\psi)$ and let $\fa_1 = \fa/\fa_0$, $\fb_1 = \fb/\fb_0$.  Without loss of generality, we assume that $\gcd(\fa_0, \fa_1) = 1$ and that $\fa_1$ is square-free, since increasing the modulus at a prime already dividing the conductor or increasing the power of a prime already dividing the modulus does not affect the character or associated Eisenstein series.  We make the same assumptions about $\fb_1$.

\begin{definition} The Gauss sum associated to a primitive character $\chi$ of conductor $\fb$ and sign $r \in (\Z/2\Z)^n$ is given by
\[ \tau(\chi) = \sum_{x \in \fb^{-1}\fd^{-1} / \fd^{-1}} \!\!\!\!\!\!\!\! \sgn(x)^r \chi(x \fb \fd) e_F(x). \]
For a general character $\chi$ we define $\tau(\chi) = \tau(\chi^0)$ where $\chi^0$ is the associated primitive character.
\end{definition}

\medskip

Recall the invariants $\fa_\cA, \fb_\cA, \fc_\cA$ defined in \S\ref{ss:adm}.

\begin{definition} \label{d:pdef}
Assume that $[\cA] \in C_\infty(\fb, \fn)$, i.e.\ that $\fb \mid \fc_{\cA}$. Write $\fb_0 = \cond(\chi_2)$.
Let $\chi$ denote the  primitive character associated to $\chi_1\chi_2^{-1}$, and write $\ff = \cond(\chi)$.
Define
\begin{align}
& \!\!\!\!\! P_\cA(\chi_1, \chi_2, k) =  \nonumber \\ 
 & \frac{1}{2^n} \frac{\tau(\chi_1 \chi_2^{-1})}{\tau(\chi_2^{-1})} \left( \frac{\N\fb_0}{\N\ff}\right)^k \sgn(-\gamma)^{q_1} \sgn(\alpha)^{q_2} \chi_1(\fc_{\cA}/\fb_0) (\chi_2^0)^{-1}(\fa_{\cA})   L(\chi^{-1}, 1-k), \label{e:pdef}
\end{align}  
where $\chi_2^0$ denotes the primitive avatar of $\chi_2$. Here and throughout this article, we adopt the convention that $\chi_1(\fm) =0$ if $\gcd(\fm, \fa) \neq 1$, and similarly for any ray class character.   We also use the convention that if $a=0$ and $\fm$ is a fractional ideal, then 
\[ \sgn(a)^{q_1} \chi_1(a \fm)= \begin{cases}
0 & \text{ if }  \fa \neq 1, \\
\chi_1(\fm) & \text{ if } \fa = 1.
\end{cases}
\]  For example, when $\gamma = 0$ in (\ref{e:pdef}) the expression $\sgn(-\gamma)^{q_1} \chi_1(\fc_\cA/\fb)$ should be interpreted as 0 if $\fa \neq 1$ and as $\chi_1^{-1}(\ft_\lambda \fd \fb_A \fb)$ if $\fa =1$.
The analogous convention holds for the term $\sgn(\alpha)^{q_2} \chi_2^{-1}(\fa_{\cA}) = \sgn(\alpha)^{q_2} \chi_2^{-1}(\alpha \fb_\cA^{-1}).$
 
 For finite sets $S$ and $T$ of finite places of $F$, define
\[  P_\cA(\chi_1, \chi_2, k, S, T) =  P_\cA(\chi_1, \chi_2, k) \prod_{\fq \in S} (1 - \chi^{-1}(\fq)\N\fq^{k-1}) \prod_{\fq \in T} (1 - \chi(\fq) \N\fq^{-k}).
\]
Further, for an ideal $\fa \mid \fn$ we define
\[
\delta_{0, \cA}(\fa) = \left\{ \begin{array}{cc} 0 & \text{ if } [\cA] \notin C_0(\fa, \fn) \\
1 & [\cA] \in C_0(\fa, \fn) \end{array} \right.
\]
and
\[
\delta_{\infty, \cA}(\fa) = \left\{ \begin{array}{cc} 0 & \text{ if } [\cA] \notin C_\infty(\fa, \fn) \\
1 & [\cA] \in C_\infty(\fa, \fn). \end{array} \right.
\]
\end{definition}

\begin{remark}  Suppose that $\fa_0  = \cond(\chi_1)$ and $\fb_0 = \cond(\chi_2)$ are coprime.  Then
\[
 \tau(\chi_1\chi_2^{-1}) = \chi_1(\fb_0) \chi_2^{-1}(\fa_0) \tau(\chi_1) \tau(\chi_2^{-1}) \]  and hence
 \begin{equation} \label{e:reformp}
 P_\cA(\chi_1, \chi_2, k) = \frac{\tau(\chi_1)}{2^n (\N\fa_0^k)}  \sgn(-\gamma)^{q_1} \sgn(\alpha)^{q_2} \chi_1(\fc_{\cA}) \chi_2^{-1}(\fa_{\cA}/\fa_0)   L(\chi^{-1}, 1-k).
\end{equation}
\end{remark}

We require one more piece of notation.

\begin{definition}  Let $\fm \mid \fn$.  We write $J_\fm = J_\fm(\cA)$ for the set of prime divisors $\fq \mid \fm$ such that $[\cA] \in C_0(\fq, \fn)$ and $J_\fm^c$ for the set of prime divisors $\fq \mid \fm$ such that $[\cA] \in C_0(\fq, \fn)$.
\end{definition}

\begin{theorem} \label{t:eiscon1}
\begin{itemize}
\item[(1)] Let $k >  1$. Assume that $\chi_2$ is primitive of conductor $\fb$. The normalized constant term of $E_k(\chi_1, \chi_2)$ at $\cA$ equals
\begin{equation} \label{e:constterm}
\delta_{\infty, \cA}(\fb) P_{\cA}(\chi_1, \chi_2, k, \emptyset, T_{\fn, \ff}),
\end{equation}
where $T_{\fn,\ff}$ is the set of primes dividing $\fn$ but not $\ff$. 
\item[(2)] Let $k=1$. Assume that $\chi_2$ is primitive of conductor $\fb$. Suppose further that $\fa$ and $\fb$ are coprime. The  normalized constant term of $E_1(\chi_1, \chi_2)$ at a cusp $\cA$ equals
\begin{equation} \label{e:consttermwt1}
\begin{aligned}
 \delta_{0,\cA}(\fa) & \delta_{\infty, \cA}(\fb)P_{\cA}(\chi_1, \chi_2, 1, \emptyset, T_{\fn, \ff})  \\
+ \  \delta_{\infty,\cA}(\fa_0)& \delta_{0, \cA}(\fb)P_{\cA}(\chi_2, \chi_1, 1,J_{\fa_1}^c, \emptyset) \prod_{\fq \in J_{\fa_1}} (1 - \N\fq^{-1}),  
\end{aligned} 
\end{equation}

\end{itemize}
\end{theorem}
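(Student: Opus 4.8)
The plan is to compute the Fourier expansion of $E_k(\chi_1,\chi_2)_\lambda|_{A,k}$ directly from the classical ``Hecke sum'' representation of the Eisenstein series and to read off its constant coefficient. Up to an explicit nonzero normalizing constant $\kappa=\kappa(k,F,\chi_2)$ --- essentially a product of $((2\pi i)^k/(k-1)!)^n$, a power of $\sqrt{|d_F|}$, and the Gauss sum $\tau(\chi_2^{-1})$, the last because the $q$-expansion normalization of \S\ref{s:q} differs from the analytic one precisely by Gauss-sum factors --- one has
\[
E_k(\chi_1,\chi_2)_\lambda(z)\ =\ \kappa\!\!\sum_{(\xi,\eta)}\frac{\chi_1(\fe_\xi)\,\chi_2^{-1}(\fe_\eta)}{\N(\xi z+\eta)^k},
\]
where $(\xi,\eta)$ runs over a suitable lattice $\Lambda=\Lambda(\lambda,\fn)\subset F^2$ modulo the diagonal action of $\cO_{F,+}^*$, the symbols $\fe_\xi,\fe_\eta$ denote the ideals cut out by $\xi,\eta$ after scaling by fixed fractional ideals, and --- with the sign conventions recalled in Definition~\ref{d:pdef} --- the terms with $\xi=0$ occur exactly when $\fa=1$; for $k=1,2$ the expression is understood as the value at $s=0$ of Hecke's absolutely convergent series $\sum\N(\xi z+\eta)^{-k}\,|\N(\xi z+\eta)|^{-2s}$. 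For imprimitive $\chi_1$ one may use the same sum with the imprimitive character, or first treat $\chi_1$ primitive (a case that overlaps with \cite{ozawa}) and then invoke the identity $E_k(\chi_1,\chi_2)=\sum_{\fd\mid\fa_1}\mu(\fd)\,\chi_1^0(\fd)\,E_k(\chi_1^0,\chi_2)|\fd$, verified on $q$-expansions, together with Lemma~\ref{l:fq}, which expresses the constant terms of $E_k(\chi_1^0,\chi_2)|\fd$ through those of $E_k(\chi_1^0,\chi_2)$.

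Next I would apply the slash operator $|_{A,k}$: for $A=\begin{psmallmatrix}\alpha&\beta\\\gamma&\delta\end{psmallmatrix}$ this amounts to the substitution $(\xi,\eta)\mapsto(\xi,\eta)A=(\xi\alpha+\eta\gamma,\ \xi\beta+\eta\delta)$, so that after renaming $(\xi',\eta')=(\xi,\eta)A$ the sum runs over $\Lambda A$ with the arguments of $\chi_1,\chi_2$ rewritten through $A^{-1}$. The constant coefficient is the subsum over those $(\xi',\eta')\in\Lambda A$ with $\xi'=0$: for $k\ge2$, each term with $\xi'\ne0$, once summed over $\eta'$, has vanishing constant coefficient by the Lipschitz summation formula, so only $\xi'=0$ survives. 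On that locus $\xi=-\gamma\eta'/\det A$ and $\eta=\alpha\eta'/\det A$, which already pulls out the signs $\sgn(-\gamma)^{q_1}\sgn(\alpha)^{q_2}$ and the fixed ideal values $\chi_1(\fc_\cA/\fb_0)(\chi_2^0)^{-1}(\fa_\cA)$ from the definitions (\ref{e:bdef})--(\ref{e:acdef}) (the case $\gamma=0$ being handled by the convention in Definition~\ref{d:pdef}); reindexing the remaining sum over ideals collapses it to a sum over ideals prime to $\fn$ equal to $L(\chi,k)\prod_{\fq\mid\fn,\ \fq\nmid\ff}(1-\chi(\fq)\N\fq^{-k})$. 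The functional equation of $L(\chi,s)$ then turns $L(\chi,k)$ into $L(\chi^{-1},1-k)$ times $\Gamma$-, $\pi$-, $\tau(\chi)$- and $\N\ff$-factors; these cancel the transcendental part of $\kappa$, and collecting the survivors --- the $2^{-n}$, the ratio $\tau(\chi_1\chi_2^{-1})/\tau(\chi_2^{-1})$, the factor $(\N\fb_0/\N\ff)^k$, the signs, the ideal values, and the powers of $\N\ft_\lambda$, $\N\fb_\cA$ and $\det A$ required by the normalization (\ref{e:cadef}) --- produces exactly $P_\cA(\chi_1,\chi_2,k,\emptyset,T_{\fn,\ff})$. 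The vanishing at cusps $[\cA]\notin C_\infty(\fb,\fn)$ drops out of the same computation: when $\fb\nmid\fc_\cA$ the inner sum over $\eta'$ through a full residue cycle modulo the conductor $\fb$ of the primitive character $\chi_2$ is a complete twisted character sum, hence $0$ by orthogonality.

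For $k=1$ I would carry out the same computation keeping Hecke's parameter $s$; two genuinely new features enter. First, one must know that $E_1(\chi_1,\chi_2)$ is holomorphic, i.e. that the $s$-series is regular at $s=0$, and this is precisely where the hypothesis $\gcd(\fa,\fb)=1$ is used; it also makes the reformulation (\ref{e:reformp}) available and lets $\ff=\cond(\chi_1\chi_2^{-1})$ be read off cleanly. Second, in weight $1$ the $s$-continued Lipschitz formula has a nonzero constant term, so the terms with $\xi'\ne0$ now contribute as well; by the symmetry $z\mapsto-1/z$, equivalently the identity $E_1(\chi_1,\chi_2)=E_1(\chi_2,\chi_1)$ of \S\ref{s:eisenstein}, this extra contribution is the image of the $\xi'=0$ term under $(\chi_1,q_1)\leftrightarrow(\chi_2,q_2)$, which is the second summand of (\ref{e:consttermwt1}). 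Stripping the imprimitive part of $\chi_1$ via the oldform identity then introduces the factors $\prod_{\fq\in J_{\fa_1}}(1-\N\fq^{-1})$ and those indexed by $J_{\fa_1}^c$, the dichotomy recording for each $\fq\mid\fa_1$ whether $[\cA]\in C_0(\fq,\fn)$. I expect the main obstacle to be this bookkeeping rather than any conceptual difficulty: pinning down $\kappa$ and the exact coset over which $\eta'$ ranges so that the powers of $\N\ft_\lambda$, $\N\fb_\cA$ and $\det A$, the sign exponents $q_1,q_2$, and the set $T_{\fn,\ff}$ of removed Euler factors all come out correctly, and --- in weight one --- evaluating at $s=0$ the Hurwitz-type series that produces the second summand while correctly tracking the $J_{\fa_1}/J_{\fa_1}^c$ split through the oldform identity.
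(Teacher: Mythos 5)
For part (1) your outline is essentially the paper's proof: write $E_k(\chi_1,\chi_2)_\lambda$ as Shimura's Hecke-regularized series, apply $|_A$ via the substitution $(a,b)\mapsto(a,b)A$, extract the constant coefficient (only the $\xi'=0$ locus survives for $k\ge 2$), and convert $L(\chi,k)\prod_{\fq\in T_{\fn,\ff}}(1-\chi(\fq)\N\fq^{-k})$ into the stated expression by the functional equation and Gauss-sum identities. Two minor differences: the vanishing off $C_\infty(\fb,\fn)$ falls out termwise --- each argument of $\chi_2^{-1}$ is an integral ideal not coprime to $\fb$, so every summand is already zero --- rather than from an orthogonality sum (though your mechanism would also work); and the paper keeps the imprimitive $\chi_1$ inside the series throughout rather than reducing to the primitive case by an oldform identity (that reduction is reserved for Theorem~\ref{t:levelraised}). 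Your misgivings about pinning down the normalizing constant and the exact lattice cosets are warranted but these are indeed only bookkeeping.

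The genuine gap is in part (2). You propose to obtain the second summand of (\ref{e:consttermwt1}) --- the contribution of the $\xi'\neq 0$ terms through the weight-one Hurwitz constant --- from the symmetry $E_1(\chi_1,\chi_2)=E_1(\chi_2,\chi_1)$. That identity only equates the \emph{total} constant terms: writing $\mathrm{CT}_\cA=I(\chi_1,\chi_2)+II(\chi_1,\chi_2)=I(\chi_2,\chi_1)+II(\chi_2,\chi_1)$, you cannot conclude $II(\chi_1,\chi_2)=I(\chi_2,\chi_1)$ without either computing $II$ directly or first establishing the supports of all four pieces and their disjointness --- and the part (1) formula does not even apply to $I(\chi_2,\chi_1)$, since there the character in the second slot ($\chi_1$) may be imprimitive, which is precisely where the $J_{\fa_1}$, $J_{\fa_1}^c$ factors come from. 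The paper evaluates the Hurwitz term head-on: the inner sum over the $\fb$-coset is a complete twisted sum for the primitive $\chi_{2,f}$, forcing $\fg=1$, i.e.\ $[\cA]\in C_0(\fb,\fn)$; the outer sum is a character sum for $\chi_{1,f}$ over a coset of $\fc_\cA$, forcing $\fa_0\mid\fc_\cA$ and producing the $J_{\fa_1}/J_{\fa_1}^c$ split together with $\prod_{\fq\in J_{\fa_1}}(1-\N\fq^{-1})$; finally the congruences $\alpha\delta\equiv 1\pmod{\fa_0\fa_2}$ and $-\beta\gamma\equiv 1\pmod{\fb}$ convert the $\delta,\beta$-dependence back into $\sgn(\alpha)^{q_1}\chi_1^{-1}(\fa_\cA/\fb)$ and $\sgn(-\gamma)^{q_2}\chi_2(\fc_\cA)$. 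None of this is delivered by the symmetry heuristic. A smaller error: regularity at $s=0$ in weight one does not require $\gcd(\fa,\fb)=1$ (the only singular case is $F=\Q$, $k=2$, trivial characters); that hypothesis is used to put $P_\cA$ in the form (\ref{e:reformp}) and to state the second summand cleanly.
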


\begin{remark} \label{r:wt1}
Note that $E_1(\chi_1, \chi_2) = E_1(\chi_2, \chi_1)$.  The theorem only assumes that $\chi_2$ is primitive; if we assume also that $\chi_1$ is primitive, then in the setting of part (2) the sets $T_{\fn, \ff}$, $J_{\fa_1}$ and $J_{\fa_1}^c$ are empty and (\ref{e:consttermwt1}) becomes symmetric with respect to $\chi_1$ and $\chi_2$.
\end{remark}

\begin{proof}[Proof of Theorem~\ref{t:eiscon1}]   Recall the definition of the Eisenstein series $E_k(\chi_1, \chi_2)$ given in \cite[Prop. 3.4]{shim} (see also \cite[section 2.2]{ddp}). Let 
\[
U = \{u \in \cO_F^* : \N u^k = 1, u \equiv 1 (\text{mod }\fn)\}.
\]
For $k \geq 1$, we have $E_k(\chi_1, \chi_2) = (f_{\lambda})$, where $f_{\lambda}(z)$ is defined via Hecke's trick as follows. For $z \in \cH$ and $s \in \C$ with $\text{Re}(2s+k) > 2$, define 
\begin{equation} \label{eq:heckeeseries0}
f_{\lambda}(z,s) = C_{\lambda} \tau(\chi_2) \sum_{\fr \in \Cl(F)} \N\fr^k  g_\lambda(z, s) 
\end{equation}
where 
\[ 
C_{\lambda} = \frac{\sqrt{d_F}\Gamma(k)^n \N(\ft_{\lambda})^{-k/2}}{[\cO_F^*:U]\N(\fd)\N(\fb)(-2 \pi i)^{kn}}
\] 
and 
\begin{align} \label{eq:heckeeseries}
g_\lambda(z, s)  &=  \sum_{a,b} \frac{\sgn(a)^{q_1}\chi_1(a \fr^{-1})\sgn(-b)^{q_2} \chi_2^{-1}(-b\fb \fd \ft_{\lambda} \fr^{-1})}{(az+b)^k|az+b|^{2s}}  \\
\begin{split}
& = \sum_{(a_0,b_0)} \sgn(a_0)^{q_1}\chi_1(a_0 \fr^{-1})\sgn(-b_0)^{q_2} \chi_2^{-1}(-b_0\fb \fd \ft_{\lambda} \fr^{-1}) \ \times \label{e:heckeeseries2} \\
&  \hspace{4cm}  \sum_{(a,b) \equiv (a_0, b_0)} \frac{1}{(az+b)^k|az+b|^{2s}}.
 \end{split}
\end{align}

 The sum in (\ref{eq:heckeeseries0}) runs over representatives $\fr$ for the wide class group $\Cl(F)$.
 The sum in (\ref{eq:heckeeseries}) runs over representatives $(a,b)$ for the nonzero elements of the product $\fr \times \fd^{-1} \fb^{-1} \ft_{\lambda}^{-1} \fr$ modulo the diagonal action of $U$. In equation (\ref{e:heckeeseries2}) the sum $(a_0, b_0)$ runs through $(\fr/\fr\fa\fb) \times (\fd^{-1} \fb^{-1}\ft_{\lambda}^{-1} \fr/ \fd^{-1} \ft_{\lambda}^{-1} \fr \fa)$, while $(a,b)$ ranges over nonzero elements of $\fr \times \fd^{-1} \fb^{-1} \ft_{\lambda}^{-1} \fr$ modulo the diagonal action of $U$ such that $a \equiv a_0 \pmod{\fr\fa\fb}$ and $b \equiv b_0 \pmod{\fd^{-1} \ft_\lambda^{-1} \fr\fa}$.
 
  Here we use 
\[
\chi_1(a) = \sgn(a)^{q_1} \text{   for  } a \equiv 1 \pmod{\fa}, \qquad  \chi_2(b) = \sgn(b)^{q_2} \text{   for  } b \equiv 1 \pmod{\fb}.
\]
We remark that in the definition (\ref{eq:heckeeseries}) we already use that $\chi_2$ is primitive, applying \cite[equation (3.11)]{shim}).
The function $f_\lambda(z, s)$ can be analytically continued in the variable $s$ to the entire complex plane, and we set $f_\lambda(z) = f_\lambda(z, 0)$. 
 
We choose representatives of the cusp $[\cA] = [(A, \lambda)]$, with $A = \begin{psmallmatrix}\alpha & \beta \\ \gamma & \delta  \end{psmallmatrix}$, as follows. 
Let $\fg = \gcd(\fb, \fc_{\cA})$. The cusp $[\cA]$ only depends on $(\alpha, \gamma)$, so for convenience we are free to choose $\beta, \delta$ such that 
\begin{equation} \label{e:cuspcondition}
\det(A) = 1, \qquad \beta \in (\ft_{\lambda} \fd \fb_{\cA} \fg)^{-1}, \qquad \delta \in \fb(\fb_{\cA}\fg)^{-1}.
\end{equation}
Such $\beta, \delta$ exist by the definition of $\fg$. The map $(a,b) \mapsto (u, v) = (a,b)A$ induces a bijection 
\begin{equation} \label{e:Abijection}
\fr \times \fr(\fd\ft_{\lambda}\fb)^{-1} \longrightarrow \fr \fb_{\cA} \fg \fb^{-1} \times \fr (\fd \ft_{\lambda} \fb_{\cA} \fg)^{-1}.
\end{equation}
This bijection restricts to a bijection
\[
\fr \fa \fb \times \fr (\fd \ft_{\lambda})^{-1} \fa \longrightarrow \fr \fb_{\cA} \fg \fa \times \fr (\fd \ft_{\lambda} \fb_{\cA} \fg)^{-1} \fn.
\]
The function $g_{\lambda}(z,s)|_A$ can be written
\begin{align}
 g_{\lambda}(z,s)|_A 
= &\sum_{u_0, v_0} \sgn(u_0\delta - v_0\gamma)^{q_1} \chi_1((u_0\delta-v_0\gamma)\fr^{-1}) \sgn(u_0\beta - v_0 \alpha)^{q_2} \chi_2^{-1}((u_0 \beta - v_0 \alpha)\fb \fd \ft_{\lambda} \fr^{-1})  \nonumber \\
& \times \sum_{(u,v) \equiv (u_0, v_0)} \frac{1}{(uz+v)^k |uz+v|^{2s}}. \label{e:gdef}
\end{align}
Here $u_0$ and $v_0$ run through complete sets of representatives of \[ \fr\fb_{\cA} \fg \fb^{-1}/ \fr \fb_{\cA} \fg \fa \quad  \text{ and } \quad \fr(\fd \ft_{\lambda} \fb_{\cA} \fg)^{-1}/\fr(\fd \ft_{\lambda} \fb_{\cA} \fg)^{-1} \fn,\] respectively, and the pair $(u,v)$ runs through representatives for
$$
\left((\fr \fb_{\cA} \fg \fb^{-1} \times \fr( \fd \ft_{\lambda} \fb_{\cA} \fg)^{-1}) \setminus \{(0,0)\} \right)/U
$$  
such that $(u,v) \equiv (u_0, v_0)$.
We now recall notation from \cite[\S 3]{shim}. Up to a constant factor the  sum 
\[
\sum_{(u,v) \equiv (u_0, v_0)} \frac{1}{(uz+v)^k |uz+v|^{2s}}
\]
equals the series $E_{k,U}(z, u_0, v_0, \fr \fb_{\cA} \fg \fa, \fr (\fd \ft_{\lambda} \fb_{\cA} \fg)^{-1} \fn)$ defined in \cite[equation (3.1)]{shim}, with $r$ in \emph{loc.\ cit.}\ set to 0.

 We are now ready to prove (1), though we remark that much of what is said below also applies to (2). We have $k \geq 2$. By \cite[equation (3.7)]{shim} the constant term of $g_{\lambda}(z,s)|_A$ at $s=0$ is the value at $s=0$ of 
\begin{align*}
\sum_{u_0,v_0} & \sgn(u_0\delta - v_0\gamma)^{q_1} \chi_1((u_0\delta-v_0\gamma)\fr^{-1}) \sgn(u_0\beta - v_0 \alpha)^{q_2} \chi_2^{-1}((u_0 \beta - v_0 \alpha)\fb \fd \ft_{\lambda} \fr^{-1}) \\
& \times (-1)^{kn} \delta(u_0, \fr \fb_{\cA}\fg \fa) \sum_{v \equiv v_0} \sgn(\N v)^k |\N v|^{-k-2s}.
\end{align*}
Here $\delta(u_0, \fr \fb_{\cA}\fg \fa) = 0$ if $u_0 \notin \fr \fb_{\cA}\fg \fa$ and is 1 otherwise. Therefore the constant term of $g_{\lambda}(z,s)|_A$ at $s=0$ is zero if $u_0 \notin \fr \fb_{\cA}\fg \fa$. 

If $u_0 \in \fr \fb_{\cA}\fg \fa$, then using the relation $b_0 \gamma = u_0 - a_0 \alpha$ we deduce that $b_0\gamma \in \fr \fb_{\cA}$. On the other hand $b_0 \gamma \in  \fr \fb_{\cA} \fc_{\cA} \fb^{-1}$. Therefore 
\[
 b_0 \gamma \in  \fr \fb_{\cA} \cap  \fr \fb_{\cA} \fc_{\cA} \fb^{-1} = \fr \fb_{\cA} \fc_{\cA} \fg^{-1}.
\]
From this we obtain $b_0 \in \fr(\ft_\lambda \fd \fg)^{-1}.$
Now consider the case that $\cA$ does not represent a cusp in $C_{\infty}(\fb, \fn)$. This is equivalent to  $\fb \nmid \fc_{\cA}$ which in turn is equivalent to $\fg \neq \fb$. Hence $b_0 \fb \fd \ft_{\lambda} \fr^{-1}$ is an integral ideal not coprime to $\fb$. Therefore $\chi_2^{-1}(-b_0 \fb \fd \ft_{\lambda} \fr^{-1}) = 0$ and hence the constant term of $g_{\lambda}(z,s)|_A$ at $s=0$ is 0. Note that here we have used $b_0 = -u_0 \beta + v_0 \alpha$.

Next we turn to the case that $\cA$ does represent a cusp in $C_{\infty}(\fb, \fn)$, so $\fg = \fb$. As observed above we are only interested in the term with $u_0 \in \fr \fb_{\cA}\fg \fa$. We choose $u_0 = 0$ to represent the trivial coset  in $\fr \fb_{\cA}\fg \fb^{-1}/\fr \fb_{\cA}\fg \fa$. Therefore the constant term of $g_{\lambda}(z,s)|_A$ at $s=0$ is the value at $s=0$ of 
\begin{align*}
& \sum_{v_0} \sgn(-v_0)^{k} \sgn(\gamma)^{q_1} \sgn(\alpha)^{q_2} \chi_1(- v_0 \gamma \fr^{-1}) \chi_2^{-1}(-v_0 \alpha \fb \fd \ft_{\lambda} \fr^{-1}) (-1)^{kn} \sum_{v \equiv v_0} \frac{\sgn(\N v)^k}{|\N v|^{k+2s}}  \\
 & = \sum_{v} \sgn(\gamma)^{q_1} \sgn(\alpha)^{q_2} \chi_1(-v \gamma \fr^{-1}) \chi_2^{-1}(-v\alpha \fb \fd \ft_{\lambda} \fr^{-1}) \N(v\cO_F)^{-k-2s}.
\end{align*}
Hence the constant term of $f_{\lambda}(z,s)|_A$ at $s=0$ is the value of the following sum at $s=0$:
\begin{align*}
 &   C_{\lambda} \tau(\chi_2) \sum_{\fr \in \Cl(F)} \N\fr^k \sum_{v} \sgn(\gamma)^{q_1} \sgn(\alpha)^{q_2} \chi_1(-v \gamma \fr^{-1}) \chi_2^{-1}(-v\alpha \fb \fd \ft_{\lambda} \fr^{-1}) \N(v\cO_F)^{-k-2s} \\
= & \ C_{\lambda} \tau(\chi_2) \sgn(\gamma)^{q_1} \sgn(\alpha)^{q_2} \chi_1(\gamma(\fb \fd \ft_{\lambda} \fb_{\cA})^{-1}) \chi_2^{-1}(\alpha (\fb_{\cA})^{-1}) \N(\fb\fd \ft_{\lambda} \fb_{\cA})^{k+2s}  \\
& \ \  \times \sum_{\fr} \sum_{v} \chi_1(v \fb \fd \ft_{\lambda} \fb_{\cA} \fr^{-1}) \chi_2^{-1}(v \fb \fd \ft_{\lambda} \fb_{\cA} \fr^{-1}) \N(v \fb \fd \ft_{\lambda} \fb_{\cA} )^{-k-2s} \N(\fr^{-1})^{-k}.
\end{align*}
The value of this at $s=0$ is 
\begin{align*}
& C_{\lambda} \tau(\chi_2) \sgn(\gamma)^{q_1} \sgn(\alpha)^{q_2} \chi_1(\fc_{\cA}/\fb) \chi_2^{-1}(\fa_{\cA}) \N(\fb \fd \ft_{\lambda}\fb_{\cA})^{k} \times \\
& [\cO_F^*:U]L(\chi, k) \prod_{\fq \in T_{\fn, \ff}}(1 - \chi(\fq) \N\fq^{-k}),
\end{align*}
where $\chi$ is the primitive character associated to $\chi_1 \chi_2^{-1}$ and
 $\fq$ runs through the set $T_{\fn, \ff}$ of all primes dividing $\fn$ but not  $\ff = \cond(\chi).$ Next we use the functional equation 
\[
L(\chi, k) = \frac{d(F)^{\frac{1}{2}-k} N\ff^{1-k} (2 \pi i )^{kn}}{2^n \Gamma(k)^n \tau(\chi^{-1})} L(\chi^{-1}, 1-k)
\] 
together with the relations 
\begin{align*}
\tau(\chi_2)\tau(\chi_2^{-1}) &= \sgn(-1)^{q_2} \N\fb, \\ 
\tau(\chi)\tau(\chi^{-1}) &= \sgn(-1)^{q_1+q_2} \N\fn.
\end{align*}
We find that the unnormalized  constant term $a_{\lambda,A}(0)$ of $f_{\lambda}(z,0)|_A$ is 
\begin{align*}
 \frac{\tau(\chi_1 \chi_2^{-1})}{\tau(\chi_2^{-1})} \left( \frac{\N\fb\fb_{\cA}}{\N\ff}\right)^k  \N\ft_{\lambda}^{k/2}\sgn(-\gamma)^{q_1} \sgn(\alpha)^{q_2} & \chi_1(\fc_{\cA}/\fb) \chi_2^{-1}( \fa_{\cA})\ \times \\
  & \frac{L(\chi^{-1}, 1-k)}{2^n}  \prod_{\fq}(1 - \chi(\fq) \N\fq^{-k}).
\end{align*}
The normalized constant term is $(\N\fb_{\cA})^{-k}\N \ft_{\lambda}^{-k/2}$ multiplied by this, yielding statement (1) of the theorem.

Everything up to this point also applies when $k=1$. However, when $k=1$, the formula in \cite[equation (3.7)]{shim} shows that there is  an additional term which arises from the constant term in the $q$-expansion of \[ \sum_{(u,v) \equiv (u_0,v_0)} \frac{1}{(uz+v)|uz+v|^{2s}} \] at $s=0$; its value is the following sum at $s=0$:
\[
\frac{(-2\pi i )^{kn} \N(\fb^{-1})}{2^n\sqrt{d(F)}} \sum_{u \equiv u_0} \frac{\sgn \N(u)}{|\N u|^{2s}}.
\] 
Therefore the second term in the constant term of $g_{\lambda}(z,s)|_A$ at $s=0$ is the value of the following at $s=0$:
\begin{align}
& \sum_{u_0, v_0} \sgn(u_0\delta - v_0 \gamma)^{q_1} \chi_1((u_0 \delta - v_0 \gamma)\fr^{-1}) \sgn(u_0 \beta - v_0 \alpha)^{q_2} \chi_2^{-1}((u_0 \beta - v_0 \alpha) \fb \fd \ft_{\lambda} \fr^{-1}) \nonumber \\
& \ \ \ \  \times  \frac{(-2\pi i )^{kn} \N(\fb^{-1})}{2^n\sqrt{d(F)}} \sum_{u \equiv u_0} \frac{\sgn \N(u)}{|\N u|^{2s}}  \nonumber \\
 \begin{split}
 = & \  \frac{(-2\pi i )^{kn} \N(\fb^{-1})}{2^n\sqrt{d(F)}} \sum_u \frac{\sgn \N u}{|\N u|^{2s}}   \\
&  \ \ \ \  \times \sum_{v_0} \sgn(u\delta - v_0 \gamma)^{q_1} \chi_1((u \delta - v_0 \gamma)\fr^{-1}) \sgn(u \beta - v_0 \alpha)^{q_2} \chi_2^{-1}((u \beta - v_0 \alpha) \fb \fd \ft_{\lambda} \fr^{-1}). \label{e:lastsum}
\end{split}
\end{align}
Here the second sum in (\ref{e:lastsum}) runs through all $v_0$ in a  set of representatives for \[ \fr(\fd\ft_{\lambda} \fb_{\cA} \fg)^{-1}/\fr(\fd\ft_{\lambda} \fb_{\cA} \fg)^{-1} \fa\fb. \] By the definition of $\fg,$ we have $\gamma \in \fd \ft_\lambda \fb_{\cA} \fg$, and hence $v_0 \in \fr(\fd\ft_{\lambda} \fb_{\cA}\fg)^{-1}\fa \Rightarrow v_0 \gamma \in \fr\fa$. 
Therefore the last sum above (i.e.\ the expression appearing in (\ref{e:lastsum}) after the $\times$ symbol) can be written as a double sum 
\begin{align}
& \sum_{v_0 \in \fr(\fd\ft_{\lambda} \fb_{\cA} \fg)^{-1}/\fr(\fd\ft_{\lambda} \fb_{\cA} \fg)^{-1} \fa}   \sgn(u\delta - v_0 \gamma)^{q_1} \chi_1((u \delta - v_0 \gamma)\fr^{-1}) \label{e:vsum0}  \\
&  \ \ \ \  \ \ \ \ \ \ \ \times   \left( \sum_{\substack{v_0' \in \fr(\fd\ft_{\lambda} \fb_{\cA}\fg)^{-1}/\fr(\fd\ft_{\lambda} \fb_{\cA} \fg)^{-1} \fa\fb \\ v_0' \equiv v_0 \!\!\!\!\!\pmod{\fr(\fd\ft_{\lambda} \fb_{\cA} \fg)^{-1} \fa}}}   \sgn(u \beta - v_0' \alpha)^{q_2} \chi_2^{-1}((u \beta - v_0' \alpha) \fb \fd \ft_{\lambda} \fr^{-1}) \right).  \label{e:largesum}
\end{align}

Recall that the ``finite part" of the character $\chi_2$ is the character
\[ \chi_{2, f} \colon (\cO_F/\fb)^* \rightarrow \C^*, \qquad \chi_{2, f}(\alpha) = \sgn(\alpha)^{q_2} \chi_2((\alpha)). \]
We extend $\chi_{2, f}$ to a function of $\cO/\fb$ by dictating $\chi_{2,f}(\alpha) = 0$ if $\gcd(\alpha, \fb) \neq 1$.
Up to multiplication by a nonzero scalar,  the expression (\ref{e:largesum}) in large parenthesis is the sum of $\chi_{2,f}$ over a coset of the ideal  in $\cO/\fb$ generated by $\fa_{\cA} \fb \fg^{-1}$.  Since $\chi_{2}$ is primitive of conductor $\fb$, it is elementary that such a sum vanishes unless $\fa_{\cA} \fb \fg^{-1}$ is divisible by $\fb$, i.e.\ unless $\fg = \cO_F$.
 In other words, if  $[\cA] \not\in C_0(\fb, \fn)$ then the  sum (\ref{e:largesum}) is 0 and if $[\cA] \in  C_0(\fb, \fn)$ then (\ref{e:largesum}) equals \[ \N\fb \cdot \sgn(u \beta)^{q_2} \chi_2^{-1}(u \beta \fb \fd \ft_{\lambda} \fr^{-1}). \]

As we now show, a similar argument implies that the sum (\ref{e:lastsum}) is zero unless we also have  $[\cA] \in C_\infty(\fa_0, \fb)$. Since $\fg=1$, the sum 
\begin{equation} \label{e:vsum}
  \sum_{v_0} \sgn(u\delta - v_0 \gamma)^{q_1} \chi_1((u \delta - v_0 \gamma)\fr^{-1}). \end{equation}
appearing in (\ref{e:vsum0})
 is the sum of $\chi_{1, f}$ over a coset of the ideal in $\cO/\fa$ generated by $\gamma(\fd \ft_\lambda \fb_\cA)^{-1} = \fc_{\cA}$.  This vanishes unless $\fa_0 = \cond(\chi_1)$ divides $\fc_\cA$.  Hence (\ref{e:vsum}) vanishes unless  $\fa_0 \mid \fc_A$, i.e.\ unless $[\cA] \in C_\infty(\fa_0, \fb)$.  Furthermore when  $[\cA] \in C_\infty(\fa_0, \fb)$ the value of (\ref{e:vsum}) can be easily calculated directly.  Let $\fa_2 = \prod_{\fq \in J_{\fa_1}^c}\fq$ and $\fa_3 = \fa_1/\fa_2 = \prod_{\fq \in J_{\fa_1}} \fq$.   Then the value of (\ref{e:vsum}) is 
 \[
 \N\fa_3 \prod_{\fq \mid \fa_3} (1 - \N\fq^{-1}) \sgn(u_0\delta)^{q_1} \chi_1^{*}(u_0\delta\fr^{-1}),
\]
where $\chi_1^*$ is the character $\chi_1$ with modulus $\fa_0\fa_2$.  

Combining these calculations, we find that for $[\cA] \in C_0(\fb, \fn) \cap C_{\infty}(\fa_0, \fn)$, the second part of the constant term of $f_{\lambda}(z,s)|_A$ at $s=0$ is the value at $s=0$ of the following:
\begin{align*}
& C_{\lambda} \tau(\chi_2)  \frac{(-2\pi i )^{n} }{2^n\N(\fr (\fd \ft_{\lambda} \fb_{\cA})^{-1}\fa_3)\sqrt{d(F)}} \cdot \N\fa_3 \prod_{\fq \mid \fa_3} (1- \N\fq^{-1}) \\
& \times \sum_{\fr \in \Cl(F)} \N\fr \sum_{u_0} \sgn(u_0\delta)^{q_1} \sgn(u_0\beta)^{q_2} \chi_1^*(u_0\delta\fr^{-1}) \chi_2^{-1}(u_0\beta \fb \fd \ft_{\lambda} \fr^{-1}) \sum_{u \equiv u_0} \frac{\sgn \N u}{|\N u|^{2s}} \\
= \  &  C_{\lambda} \tau(\chi_2)  \frac{(-2\pi i )^{n}}{2^n\N((\fd \ft_{\lambda} \fb_{\cA})^{-1} )\sqrt{d(F)}} \prod_{\fq \mid \fa_3} (1- \N\fq^{-1}) \sgn(\delta)^{q_1} \sgn(\beta)^{q_2} \chi_1^*(\delta \fb^{-1} \fb_{\cA}) \chi_2^{-1}(\beta  \fd \ft_{\lambda} \fb_{\cA})  \\
& \times \sum_{\fr} \N\fr \sum_{u} \chi_1^*\chi_2^{-1}(u \fb (\fb_{\cA})^{-1}\fr^{-1}) \frac{1}{|\N u|^{2s}}.
\end{align*}
The value at $s=0$ is 
\begin{equation} \label{e:finally}
\frac{\tau(\chi_2) \N\ft_{\lambda}^{1/2} \N\fb_{\cA}}{2^n \N\fb} \sgn(\delta)^{q_1} \sgn(\beta)^{q_2} \chi_1^*(\delta \fb^{-1} \fb_{\cA}) \chi_2^{-1}(\beta  \fd \ft_{\lambda} \fb_{\cA}) L(\chi, 0) \prod_{\fq \mid \fa_2} (1 - \chi(\fq)) \prod_{\fq \mid \fa_3}(1 - \N\fq^{-1}).
\end{equation}
Since $\fa_0 \fa_2 \mid \fc_\cA$, it follows that $\beta \gamma \in \fa_0 \fa_2$ and hence $\alpha \delta \equiv 1 \pmod{\fa_0\fa_2}$, whence
\[  \sgn(\delta)^{q_1} \chi_1^*(\delta \fb^{-1} \fb_{\cA}) =  \sgn(\alpha)^{q_1} (\chi_1^*)^{-1}(\fa_\cA/\fb) = \sgn(\alpha)^{q_1} \chi_1^{-1}(\fa_\cA/\fb), \]
where the last equality follows since $\gcd(\fa_{\cA}, \fa_2)=1$.  Similarly $\alpha \delta \in \fb \Rightarrow -\beta\gamma \equiv 1 \pmod{\fb}$, hence 
\[   \sgn(\beta)^{q_2} \chi_2^{-1}(\beta  \fd \ft_{\lambda} \fb_{\cA}) = \sgn(-\gamma)^{q_2} \chi_2(\fc_\cA).
\]
Therefore, noting (\ref{e:reformp}), after scaling by the normalization factor $(\N\fb_{\cA})^{-1} \N\ft_{\lambda}^{-1/2}$ for constant terms, the value in (\ref{e:finally}) is equal to \[ P_\cA(\chi_2, \chi_1, 1)\prod_{\fq \mid \fa_2} (1 - \chi(\fq)) \prod_{\fq \mid \fa_3}(1 - \N\fq^{-1}). \]

The first term calculated above (for $k \geq 1$) is non-zero only when the cusp $[\cA]$ belongs to $C_{\infty}(\fb, \fn) \cap C_0(\fa, \fn)$. The second term is non-zero only when $[\cA]$ belongs to $C_0(\fb, \fn) \cap C_{\infty}(\fa_0, \fn)$.
This finishes the proof.
\end{proof}

\subsection{Constant terms for raised level and imprimitive characters} \label{s:rl}

In our arithmetic application \cite{dk}, we require the constant terms of the level-raised Eisenstein series $E_k(\chi, \psi)|_\fm$ for  auxiliary squarefree ideals $\fm$, with $\chi$ and $\psi$ possibly imprimitive.  This level raising is related to the $T$-smoothing operation of Deligne--Ribet \cite{dr}.

The following notation will be in effect throughout this section.
 Let $\chi$ and $\psi$ be characters of modulus $\fa$ and $\fb$ and signatures $q_1$ and $q_2$, respectively. Let $k$ be a positive integer such that $q_1+q_2 \equiv (k, \ldots, k) \pmod{2}$. We denote the conductors of $\chi$ and $\psi$ by $\fa_0$ and $\fb_0$, respectively and put $\fa_1 = \fa/\fa_0$ and $\fb_1 = \fb/\fb_0$. Assume $\gcd(\fb_1, \fa) = 1$. Let $\fn = \fa\fb\fl$ for a square-free integral ideal $\fl$ with $\gcd(\fa\fb,\fl) = 1$.   We assume that $\fa_1$ is squarefree and coprime to $\fa_0$, and similarly for $(\fb_1, \fb_0)$.

Let $\cA = (A, \lambda)$ with \[ A = \left( \begin{array}{cc} \alpha & * \\ \gamma & * \end{array}\right) \in \GL_2^+(F), \qquad \lambda \in \Cl^+(F).\]

\begin{theorem}\label{t:levelraised} Let $\fm$ be a divisor of $\fl$. The normalized constant term of $E_k(\chi, \psi)|_{\fm}$ at $\cA$ is given as follows:
\begin{itemize}
\item If $k \geq 2$, then the normalized constant term of $E_k(\chi, \psi)|_{\fm}$ at $\cA$ is
\begin{equation}
\delta_{\infty, \cA}(\fb_0) P_{\cA}(\chi, \psi, k, J_{\fb_1}^c, J_{\fa_1}) \prod_{\fq \in J_{\fb_1}}(1 - \N\fq^{-1}) \prod_{\fq \in J_{\fm}}(\psi(\fq) \N \fq^k)^{-1} \prod_{\fq \in J_{\fm}^c}\chi^{-1}(\fq).
\end{equation}
\item If $k=1$, we further assume that $\gcd(\fa, \fb) =1$. Then the normalized constant term of $E_1(\chi, \psi)|_{\fm}$ at $\cA$ is
\label{e:delta}
\begin{equation}
\begin{aligned}
& \delta_{0, \cA}(\fa) \delta_{\infty, \cA}(\fb_0) P_\cA(\chi, \psi, 1, J_{\fb_1}^c, J_{\fa_1}) \prod_{\fq \in J_{\fb_1}}(1 - \N\fq^{-1}) \prod_{\fq \in J_{\fm}} (\psi(\fq)\N\fq)^{-1} \prod_{\fq \in J_{\fm}^c} \chi^{-1}(\fq) \\
+ \ & \delta_{\infty, \cA}(\fa_0) \delta_{0, \cA}(\fb) P_\cA(\psi, \chi, 1, J_{\fa_1}^c, J_{\fb_1}) \prod_{\fq \in J_{\fa_1}}(1 - \N\fq^{-1}) \prod_{\fq \in J_{\fm}} (\chi(\fq)\N\fq)^{-1} \prod_{\fq \in J_{\fm}^c} \psi^{-1}(\fq) 
\end{aligned}
\label{e:delta1}
\end{equation}
\end{itemize}
\end{theorem}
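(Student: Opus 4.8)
The strategy is to reduce, by a Möbius inversion, to the case of primitive characters with no level raising --- which is exactly Theorem~\ref{t:eiscon1} --- and then to propagate the resulting formula back through the level-raising operators using Lemma~\ref{l:fq} together with one short direct computation. Write $\chi^0,\psi^0$ for the primitive characters attached to $\chi,\psi$. Throughout I assume the mild coprimality built into the hypotheses ($\fa_1,\fb_1$ squarefree, $\fa_1$ coprime to $\fa_0$, $\fb_1$ coprime to $\fb_0$, $\gcd(\fb_1,\fa)=1$, $\fl$ coprime to $\fa\fb$), and I set aside the usual exceptional case $F=\Q$, $k=2$ with both characters trivial.

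\textbf{Step 1: reduction to primitive characters.} The first task is to establish, for squarefree $\fm\mid\fl$, the identity
\[
E_k(\chi,\psi)\big|_{\fm}\ =\ \sum_{\fe_1\mid\fa_1}\ \sum_{\fe_2\mid\fb_1}\ \mu(\fe_1)\chi^0(\fe_1)\,\mu(\fe_2)\psi^0(\fe_2)\,\N\fe_2^{\,k-1}\ E_k(\chi^0,\psi^0)\big|_{\fe_1\fe_2\fm}
\]
in $M_k(\fn)$, noting that $\fe_1,\fe_2,\fm$ are pairwise coprime, so $\fe_1\fe_2\fm$ is squarefree and the level-raising operators make sense and commute. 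I would prove this on $q$-expansions: the Dirichlet series attached to the positive Fourier coefficients of $E_k(\eta,\phi)$ is $L(\eta,s)L(\phi,s-k+1)$, and the identity is precisely the factorization of the imprimitive Euler factors $L(\chi,s)=L(\chi^0,s)\prod_{\fq\mid\fa_1}(1-\chi^0(\fq)\N\fq^{-s})$ and $L(\psi,s-k+1)=L(\psi^0,s-k+1)\prod_{\fq\mid\fb_1}(1-\psi^0(\fq)\N\fq^{k-1}\N\fq^{-s})$, combined with the fact from (\ref{e:fq}) that $|\fq$ multiplies the Dirichlet series by $\N\fq^{-s}$. One then verifies that the constant terms $c_\lambda(0,-)$ agree as well, using (\ref{eq:levelconstant}) and the formulas for $c_\lambda(0,E_k(\chi^0,\psi^0))$ recalled in \S\ref{s:eisenstein}; since a Hilbert modular form of weight $k\ge1$ is determined by its $q$-expansion, this completes Step 1. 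The same identity with $k=1$ and $\N\fe_2^{\,k-1}=1$ holds by the same computation.

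\textbf{Step 2: effect of a single level-raising operator on a constant term.} Fix a cusp $\cA=(A,\lambda)$ with $A=\begin{psmallmatrix}\alpha&\beta\\ \gamma&\delta\end{psmallmatrix}$ and a prime $\fq$ that is coprime to the level of the form in question and to all conductors. Using (\ref{e:lrdef}) one writes $(g|\fq)_\lambda|_A=\N\fq^{-k/2}\,g_\mu|_{A''}$ with $A''=\begin{psmallmatrix}a_\mu\alpha&a_\mu\beta\\ \gamma&\delta\end{psmallmatrix}$ and $\fq\ft_\lambda=(a_\mu)\ft_\mu$, and then computes the new invariants directly. There are two cases: if $\fq\mid\fc_\cA$ then $\fb_{\cA''}=a_\mu\fb_\cA$, $\fc_{\cA''}=\fq^{-1}\fc_\cA$, $\fa_{\cA''}=\fa_\cA$, and $c_\cA(0,g|\fq)=c_{\cA''}(0,g)$ with no scalar (this is Lemma~\ref{l:fq}); if $\fq\nmid\fc_\cA$ then $\fb_{\cA''}=a_\mu\fq^{-1}\fb_\cA$, $\fc_{\cA''}=\fc_\cA$, $\fa_{\cA''}=\fq\fa_\cA$, and $c_\cA(0,g|\fq)=\N\fq^{-k}c_{\cA''}(0,g)$, where the $\N\fq^{-k}$ comes from the normalization factors in (\ref{e:cadef}). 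In both cases $\sgn(\alpha),\sgn(-\gamma)$ and the property ``$[\cA]\in C_\infty(\fb_0,-)$'' are unchanged, since the modifications involve only the totally positive element $a_\mu$ and the prime $\fq$, which is coprime to $\fb_0$.

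\textbf{Step 3: assembly.} For each pair $(\fe_1,\fe_2)$, apply Step 2 repeatedly to strip $\fe_1\fe_2\fm$ off of $E_k(\chi^0,\psi^0)|_{\fe_1\fe_2\fm}$ (the classification of primes into ``dividing $\fc_\cA$'' or not, and the accumulated scalars, are order-independent), then apply Theorem~\ref{t:eiscon1} to $E_k(\chi^0,\psi^0)$ at the resulting cusp. Because the invariants $\fa,\fc$ of that cusp differ from $\fa_\cA,\fc_\cA$ only by the primes removed, Step 2 shows that $c_\cA(0,E_k(\chi^0,\psi^0)|_{\fe_1\fe_2\fm})$ equals $\delta_{\infty,\cA}(\fb_0)\,P_\cA(\chi^0,\psi^0,k)$ times the $T_{\fa_0\fb_0,\ff}$-Euler factors of Theorem~\ref{t:eiscon1} (trivial when $\gcd(\fa_0,\fb_0)=1$; otherwise these get absorbed into the Gauss sum ratio in $P_\cA$ via the non-coprime analogue of the relation in the Remark after Definition~\ref{d:pdef}), times $\prod\chi^0(\fq)^{-1}$ over primes of $\fe_1\fe_2\fm$ dividing $\fc_\cA$ and $\prod(\psi^0(\fq)\N\fq^k)^{-1}$ over those not dividing $\fc_\cA$. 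Substituting into the sum of Step 1 and splitting each $\fe_i=\fe_i^{c}\fe_i'$ into its part dividing $\fc_\cA$ and the rest, the sums over $\fe_1$ and $\fe_2$ each factor as an Euler product $\sum_{\fe\mid D}\mu(\fe)g(\fe)=\prod_{\fq\mid D}(1-g(\fq))$. The sub-sum over $\fe_i^c$ contributes $\sum_{\fe^c\mid D}\mu(\fe^c)=[D=(1)]$ for the ``wrong'' character --- reproducing exactly the vanishing of $P_\cA(\chi,\psi,k)$ (resp.\ the $\delta_0$-conditions for $k=1$) when some prime of the imprimitive part divides $\fc_\cA$ --- while the sub-sum over $\fe_i'$ produces precisely the ``$S$''- and ``$T$''-factors of $P_\cA(\chi,\psi,k,J_{\fb_1}^c,J_{\fa_1})$ together with the $\prod_{\fq\in J_{\fb_1}}(1-\N\fq^{-1})$ factor; the primes dividing $\fm$ contribute the remaining $\prod_{\fq\in J_\fm^c}\chi^{-1}(\fq)\prod_{\fq\in J_\fm}(\psi(\fq)\N\fq^k)^{-1}$. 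This gives the stated formula for $k\ge2$. For $k=1$ one instead applies Theorem~\ref{t:eiscon1}(2), which contributes two terms; processing each through Steps 2--3 yields the two lines of (\ref{e:delta1}), the second obtained from the first through the symmetry $E_1(\chi,\psi)=E_1(\psi,\chi)$, which swaps the roles of $(\fa_0,\fa_1)$ and $(\fb_0,\fb_1)$.

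\textbf{Where the difficulty lies.} The conceptual moves (Steps 1--2) are routine; the real work is the Euler-factor bookkeeping of Step 3 --- in particular checking that the $T_{\fa_0\fb_0,\ff}$-product produced by Theorem~\ref{t:eiscon1}, the Möbius sums over divisors of $\fa_1$ and $\fb_1$, and the $\delta$-conventions of Definition~\ref{d:pdef} (including the $\gamma=0$ and $a=0$ conventions) all combine to give exactly the stated expression, and that the degenerate cases where some prime of $\fa_1$ divides $\fc_\cA$, or $\fa_0$ and $\fb_0$ share a prime, are handled consistently. None of this requires a new idea, but it does require care.
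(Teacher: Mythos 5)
Your proposal is correct in substance but takes a genuinely different route from the paper. The paper never computes how $|_\fq$ transforms a normalized constant term at a cusp with $\fq \nmid \fc_\cA$. Instead it first assumes $\psi$ primitive and inducts on the number of primes in $\fm$, using the identity $E_k(\chi_\fm,\psi) = \sum_{\ft\mid\fm}\mu(\ft)\chi(\ft)E_k(\chi,\psi)|_\ft$: the left side's constant term is known from Theorem~\ref{t:eiscon1} (which only requires $\chi_2$ primitive, so the imprimitive $\chi_\fm$ is allowed), all terms but $\ft=\fm$ are known by induction, and the unknown term is solved for; a second M\"obius sum over $\ft\mid\fb_1$ then removes the primitivity of $\psi$. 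You instead collapse everything into one double M\"obius identity over $\fe_1\mid\fa_1$, $\fe_2\mid\fb_1$ reducing to $E_k(\chi^0,\psi^0)|_{\fe_1\fe_2\fm}$, and then prove a direct transformation law for $|_\fq$ on normalized constant terms — your Step~2, which sharpens Lemma~\ref{l:fq} by adding the case $\fq\nmid\fc_\cA$ (scalar $\N\fq^{-k}$, $\fa_{\cA''}=\fq\fa_\cA$); I checked this computation against (\ref{e:cadef}) and (\ref{e:lrdef}) and it is right. What the paper's route buys is that the only input on level raising is the already-proved Lemma~\ref{l:fq} plus formal cancellation, at the cost of a two-stage induction; what your route buys is a single explicit Euler-product computation in which each factor of the final formula is visibly attributable to one prime, at the cost of the extra Step~2 lemma and heavier bookkeeping in Step~3.

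One point in your write-up is misstated, though it is an imprecision you share with the paper rather than a gap in your argument. The factors $\prod_{\fq\in T_{\fa_0\fb_0,\ff}}(1-\chi(\fq)\N\fq^{-k})$ produced by Theorem~\ref{t:eiscon1} for primes dividing $\fa_0\fb_0$ but not $\ff$ do not ``get absorbed into the Gauss sum ratio'': they arise from converting the imprimitive $L$-value $L(\chi_1\chi_2^{-1},k)$ to the primitive one, not from $\tau(\chi_1\chi_2^{-1})/\tau(\chi_2^{-1})$, and they simply persist. They are likewise dropped without comment in the paper's equation (\ref{e:el}) and are absent from the statement of Theorem~\ref{t:levelraised}, which is therefore consistent with Theorem~\ref{t:eiscon1} only when every prime of $\fa_0\fb_0$ divides $\ff$ (automatic if $\gcd(\fa_0,\fb_0)=1$). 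You should either carry these factors or note the implicit hypothesis; similarly, your Step~3 tacitly uses that $\fa_1$ is coprime to $\fb_0$, which is worth stating.
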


\begin{remark}  The term $\delta_{0, \cA}(\fa)$ is unnecessary in (\ref{e:delta1}) since $P_\cA(\chi, \psi, 1)$ already vanishes if $[\cA] \not\in C_0(\fa, \fn)$.  We include this factor simply as a reminder that this portion of the constant term is supported on $C_0(\fa, \fn) \cap C_\infty(\fb_0, \fn)$. 
\end{remark}

\begin{proof}  We give the  proof for $k \geq 2$.  The argument for $k=1$ is identical and left to the reader.

First we assume that $\fb = \fb_0$, i.e. $\fb_1 = 1$ and calculate the constant term of $E_k(\chi, \psi)|_{\fm}$. Let $\fm = \fq_1 \cdots \fq_j$ and use induction on $j$. The base case $j=0$ follows directly from (\ref{e:constterm}). For  $j>0$ we use the expression 
\begin{equation} \label{e:level}
E_k(\chi_{\fm}, \psi) = \sum_{\ft \mid \fm} \mu(\ft) \chi(\ft)E_k(\chi, \psi)|_{\ft}.
\end{equation}
Here $\chi_\fm$ denotes the character $\chi$ viewed with modulus $\fa\fm$. If $J_\fm = \{\fq_1, \ldots,\fq_j\}$, then $\delta_{0, \cA}(\fa\fm) = \delta_{0, \cA}(\fa)$, so by (\ref{e:constterm}) the normalized constant term of $E_k(\chi_\fm, \psi)$ at $\cA$ is
\begin{equation}\label{e:el}
\delta_{0, \cA}(\fa) \delta_{\infty, \cA}(\fb_0) P_{\cA}(\chi, \psi, k, \emptyset, J_{\fa_1}) \prod_{\fq \mid \fm}(1- \chi\psi^{-1}(\fq_i)\N \fq_i^{-k}). 
\end{equation}
The induction hypothesis gives the normalized constant term at $\cA$ of each term on the right side of (\ref{e:level}) except for $E_k(\chi, \psi)|_{\fm}$.  Therefore one can use (\ref{e:level}) and (\ref{e:el}) to solve for  the normalized constant term of 
$E_k(\chi, \psi)|_{\fm}$ at $\cA$.  One  obtains  
\begin{align*}
\delta_{0, \cA}(\fa) \delta_{\infty, \cA}(\fb_0) P_{\cA}(\chi, \psi, k, \emptyset, J_{\fa_1}) \prod_{\fq \mid \fm}(\chi\psi^{-1}(\fq_i)\N \fq_i^{k})^{-1}
\end{align*}
as desired.  Now suppose  $J_{\fm} \neq \{\fq_1, \ldots, \fq_j\}$, which is equivalent to $[\cA] \notin C_0(\fm, \fn)$. Then $\delta_{0, \cA}(\fa\fm) =0$ so (\ref{e:constterm}) implies that the constant term of $E_k(\chi_{\fm}, \psi)$ at $\cA$ is 0. Without loss of generality, assume that $\fq_j \notin J_\fm$. For every  subset $I \subset \{\fq_1, \ldots, \fq_{j-1}\}$, put $\mathfrak{q}_I = \prod_{\fq_i \in I} \mathfrak{q}_i$. If $I \neq \{\fq_1, \dotsc, \fq_{j-1}\},$ then we can apply the induction hypothesis to the forms $ E_k(\chi, \psi)|_{\ft}$ for both $\ft = \fq_I$ and $\ft = \fq_I \fq_j$ on the right side of (\ref{e:level}) to see that the contributions made by their constant terms at $\cA$ cancel. It follows that the constant term of $E_k(\chi, \psi)|_{\fm}$ at $\cA$ equals $\chi^{-1}({\fq_j})$ times that of $E_k(\chi, \psi)|_{\fm/\mathfrak{q}_j}$, and we are done by the induction hypothesis. 

Next we relax the condition that $\fb_1 =1$.
 We use the expression 
\begin{equation} \label{e:b1gen}
E_k(\chi, \psi)|_{\fm} = \sum_{\ft \mid \mathfrak{b}_1} \mu(\ft) \psi(\ft) (\N\ft)^{k-1}E_k(\chi, \psi^0)|_{\ft\fm},
\end{equation}
where $\psi^0$ is the primitive character associated with $\psi$. The case already completed for $\psi$ primitive gives the constant terms of the forms on the right of (\ref{e:b1gen}).  The result then follows from the formula
\begin{align*}
\sum_{\ft \mid \fb_1} & \mu(\ft) \psi(\ft) (\N\ft)^{k-1} \prod_{\fq \in J_{\ft}}(\psi(\fq) \N\fq^k)^{-1} \prod_{\fq \in J_{\ft}^c} \chi^{-1}(\fq) \\ 
= & \prod_{\fq \in J_{\fb_1}}(1 - \N\fq^{-1}) \prod_{\fq \in J_{\fb_1}^c}(1- \chi^{-1} \psi(\fq) \N\fq^{k-1}).
\end{align*}
\end{proof}

\section{Ordinary forms} \label{s:ordinary}

Let $\fp$ be a prime ideal of $\cO_F$ dividing a prime number $p$.  Following Hida, we define the ordinary operator
\[ e_\fp^{\ord} = \lim_{n \rightarrow \infty} U_\fp^{n!}.\]
Let $\mathfrak{P} = \gcd(p^\infty, \fn)$ be the $p$-part of $\mathfrak{n}$.  We define
\[ e_\fP^{\ord} = \prod_{\fp \mid \fP} e_{\fp}. \]
Let $E$ be a finite extension of $\Q_p$. The space of $\fP$-ordinary forms is defined by:
\[ M_k(\fn, E)^{\fP\dord} = e_\fP^{\ord}  M_k(\fn, E) \]
This is the largest subspace on which the operator $U_\fp$ acts invertibly  for each $\fp \mid \fP$.

\begin{theorem} \label{t:cusp}  A form $f \in M_k(\mathfrak{n}, E)^{\fP\dord}$ is cuspidal if and only if its constant terms at all cusps in $C_{\infty}(\mathfrak{P}, \mathfrak{n})$ are zero. If $f \in M_k(\mathfrak{n}, E)$ has constant terms zero at all cusps in $C_\infty(\fP, \fn)$, then $e_\fP^{\ord}(f)$ is cuspidal.
\end{theorem}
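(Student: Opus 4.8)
\emph{Plan.} One direction is immediate: a cusp form has vanishing normalized constant term at every cusp, hence in particular at every cusp in $C_\infty(\fP,\fn)$. We treat weight $k\ge 2$ (excluding, as usual, $F=\Q$, $k=2$); as the introduction indicates this is the range in which the statement holds as given, and the argument below uses crucially that $\N\fp^{\,k-1}$ is a power of $p$, hence not a $p$-adic unit. First I would set up the linear algebra. By (\ref{e:mes}) and (\ref{e:conke}) the constant-term map $\con_k\colon M_k(\fn)\to C_k$ is surjective with kernel $S_k(\fn)$, so it induces $M_k(\fn)/S_k(\fn)\xrightarrow{\sim}C_k$. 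Since each $U_\fp$ ($\fp\mid\fP$) preserves $S_k(\fn)$, it descends to an operator $\overline U_\fp$ on $C_k$ with $\con_k(f|U_\fp)=\overline U_\fp(\con_k f)$, and the idempotents $e_\fp^{\ord}$, $e_\fP^{\ord}$ descend likewise. Decompose $C_k=\bigoplus_{[\cA]}\C$ along the cusps (along the admissible cusps if $k$ is odd, as in \S\ref{s:ctm}), and let $V\subseteq C_k$ be the span of the coordinate lines indexed by cusps \emph{not} in $C_\infty(\fP,\fn)$. Both remaining assertions follow once we show $e_\fP^{\ord}V=0$: if $f\in M_k(\fn,E)$ has constant terms $0$ at all cusps of $C_\infty(\fP,\fn)$ then $\con_k f\in V$, so $\con_k(e_\fP^{\ord}f)=e_\fP^{\ord}\con_k f=0$, i.e.\ $e_\fP^{\ord}f\in S_k(\fn)$ is cuspidal; and if $f$ is moreover $\fP$-ordinary then $f=e_\fP^{\ord}f$.

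The key input is the behaviour of $U_\fp$ on normalized constant terms. For $\fp\mid\fP$ put $a_\fp=\ord_\fp(\fn)$ and, for a cusp $[\cA]$, set $\ell_\fp(\cA)=\min(\ord_\fp(\fc_\cA),a_\fp)\in\{0,1,\dots,a_\fp\}$; thus $[\cA]\in C_\infty(\fp,\fn)$ exactly when $\ell_\fp(\cA)=a_\fp$. I would establish, by a direct computation with the coset representatives $\begin{psmallmatrix}1&\ast\\0&\varpi_\fp\end{psmallmatrix}$ of $U_\fp$, a formula $c_\cA(0,f|U_\fp)=\sum_{\cA'}\kappa_{\cA,\cA'}\,c_{\cA'}(0,f)$ with $\kappa_{\cA,\cA'}$ depending only on the cusps, enjoying: $\kappa_{\cA,\cA'}=0$ unless $\ell_\fp(\cA')\ge\ell_\fp(\cA)$ (and $\cA'$ agrees with $\cA$ away from $\fp$); and, whenever $\ell_\fp(\cA)<a_\fp$, the unique term with $\ell_\fp(\cA')=\ell_\fp(\cA)$ has $\cA'=\cA$, with $\kappa_{\cA,\cA}=\N\fp^{\,k-1}$. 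Concretely, one runs through the representatives, multiplies each into a representative $A$ of $\cA$, rewrites the product as $\gamma\cdot(\text{upper triangular})$ with $\gamma\in\Gamma_{1,\lambda}(\fn)$, reads off the output cusp, and collects the normalizing factors of (\ref{e:cadef}): the representative with $\ast\in\fp$ produces the term $\N\fp^{\,k-1}c_\cA(0,f)$, and the remaining $\N\fp-1$ representatives contribute at cusps of strictly larger $\fp$-level. (Over $F=\Q$, $\fn=p$, this recovers the classical pair $c_\infty(0,f|U_p)=c_\infty(0,f)$ and $c_0(0,f|U_p)=p^{\,k-1}c_0(0,f)+(1-p^{-1})c_\infty(0,f)$.)

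Granting this, fix $\fp\mid\fP$ and let $V_\fp\subseteq C_k$ be the span of the coordinate lines at cusps with $\ell_\fp(\cA)<a_\fp$. The support condition gives $\overline U_\fp V_\fp\subseteq V_\fp$, because a cusp with $\ell_\fp=a_\fp$ receives contributions only from cusps with $\ell_\fp=a_\fp$. On $V_\fp$ we may write $\overline U_\fp|_{V_\fp}=\N\fp^{\,k-1}\cdot\Id+M$, where $M$ propagates coordinates only from strictly larger $\fp$-level; since the $\fp$-levels occurring in $V_\fp$ run through the finite set $\{0,1,\dots,a_\fp-1\}$, $M$ is nilpotent, so every eigenvalue of $\overline U_\fp|_{V_\fp}$ equals $\N\fp^{\,k-1}$, which is not a $p$-adic unit. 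Hence $e_\fp^{\ord}V_\fp=0$, and a fortiori $e_\fP^{\ord}V_\fp=0$. A cusp lies outside $C_\infty(\fP,\fn)$ precisely when $\ell_\fp(\cA)<a_\fp$ for some $\fp\mid\fP$, so $V=\sum_{\fp\mid\fP}V_\fp$, whence $e_\fP^{\ord}V=0$.

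The step I expect to be the main obstacle is the coset computation underpinning the transformation formula over a general totally real field: one must manage the integral structures ($\ft_\lambda$, $\fd$, $\fb_\cA$, $\fc_\cA$) entering the matrices and check that the normalization of (\ref{e:cadef}) yields exactly the coefficient $\N\fp^{\,k-1}$ at the cusp itself — and that no constant term at a cusp of strictly smaller $\fp$-level ever enters. An alternative proof avoids this computation by using instead Theorems~\ref{t:eiscon1} and~\ref{t:levelraised}: one describes the action of $U_\fp$ on the basis $\{E_k(\eta,\psi)|_\fq\}$ of $E_k(\fn)$, identifies the $\fP$-ordinary Eisenstein subspace (spanned, up to replacing $\fq$ by its prime-to-$p$ part, by the ordinary $p$-stabilizations, which forces $p\nmid\fq$ and $p\nmid\cond(\eta)$), and checks from the explicit constant-term formulas that the constant-term vectors of these forms remain linearly independent after restriction to $C_\infty(\fP,\fn)$; combined with (\ref{e:conke}) and $E_k(\fn)\cap S_k(\fn)=\{0\}$, this again forces a $\fP$-ordinary form with constant terms vanishing on $C_\infty(\fP,\fn)$ to be cuspidal.
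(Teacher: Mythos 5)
Your plan is sound, and its skeleton coincides with the paper's second proof of this theorem: both arguments rest on the same computation of how $U_\fp$ moves normalized constant terms between cusps, and both exploit that the resulting eigenvalues on the part of the constant-term space supported outside $C_\infty(\fP,\fn)$ are $p$-adic non-units, while an ordinary $U_\fp$-eigenvalue is a unit. What you do differently is to descend everything to $C_k\cong M_k(\fn)/S_k(\fn)$ and phrase the conclusion as $e_\fP^{\ord}V=0$; the paper instead argues directly on forms, first pushing the vanishing from $C_\infty(\fP,\fn)$ down to $C_\infty(\fP_0,\fn)$ with $\fP_0$ the radical of $\fP$, and then removing the primes of $\fP_0$ one at a time by following a finite chain of cusps until it repeats. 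Your packaging is cleaner and yields the second sentence of the theorem (cuspidality of $e_\fP^{\ord}f$) with no additional work; your ``alternative proof'' sketch is essentially the paper's first proof (Jordan-form Eisenstein basis plus Theorems~\ref{t:eiscon1} and~\ref{t:levelraised}).

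One correction to the key lemma you propose to establish, though it does not sink the argument: the structure $\overline U_\fp|_{V_\fp}=\N\fp^{\,k-1}\cdot\Id+(\text{nilpotent})$ is not what the coset computation gives. First, if $0<\ord_\fp(\fc_\cA)<a_\fp$ then \emph{every} cusp $(m_\beta A,\lambda)$ occurring in the $U_\fp$-sum has $\fp$-level at least $\ord_\fp(\fc_\cA)+1$ (this is precisely the assertion that $\fp^r\mid\fc_\cA$ with $r>0$ forces $\fp^{r+1}\mid\fc_{\cA'}$, which the paper uses), so the diagonal entry at such a cusp is $0$, not $\N\fp^{\,k-1}$. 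Second, at a cusp with $\ord_\fp(\fc_\cA)=0$ there is indeed a unique $\beta$ producing another level-$0$ cusp $\cA'$, but $\cA'$ need not equal $\cA$: one only obtains a self-map of the finite set of level-$0$ cusps, which is exactly why the paper must iterate until the sequence of cusps repeats. The corrected matrix is $\N\fp^{\,k-1}P+M$, where $P$ is the $0$--$1$ matrix of that self-map on the level-$0$ block and vanishes on the higher blocks, and $M$ moves mass only to strictly larger $\fp$-level; its eigenvalues are $0$ and $\N\fp^{\,k-1}$ times roots of unity, all of $p$-adic absolute value less than $1$ for $k\ge 2$, so $e_\fp^{\ord}V_\fp=0$ and your conclusion stands. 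The restriction to $k\ge 2$ matches the paper (both of its proofs use $k>1$), so that is not a defect.
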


We provide two proofs of Theorem~\ref{t:cusp}.  The first proof is longer, but its method could have other applications, so we include full details.  We begin with the following elementary lemma from linear algebra.

\begin{lemma}  \label{l:jordan}
Let $V$ be a finite dimensional vector space over a field and let  $B = \{v_1, \dotsc, v_n\}$ be a basis.
Let $S$ be a possibly infinite set of commuting endomorphisms of $V$ satisfying the following properties:
\begin{itemize}
\item After re-ordering, the matrix for each $T \in S$ with respect to the basis $B$ is in Jordan canonical form.
\item Every Jordan block of size greater than 1 has associated eigenvalue 0.
\end{itemize}
Let $B' \subset B$ be the set of basis vectors that are actual (non-generalized) eigenvectors for every $T \in S$.  Suppose the elements of $B'$ are distinguished by their $S$-eigenvalues, i.e.\ for  $v_i \neq v_j$ in $B'$, there exists $T \in S$ such that the $T$-eigenvalues of $v_i$ and $v_j$ are distinct.
Finally let $W \subset V$ be a subpace that is preserved by each $T$.  Then $W$ is nonzero if and only if it contains some $v_i \in B'$. 
\end{lemma}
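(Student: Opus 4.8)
The plan is to work directly with the Jordan structure. The "if" direction is trivial: if some $v_i \in B' \subset W$, then $W$ is nonzero. For the "only if" direction, suppose $W \neq 0$ and pick a nonzero $w \in W$. The strategy is to apply a carefully chosen composition of the operators in $S$ to $w$ in order to produce a nonzero vector that is one of the $v_i \in B'$.

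First I would decompose $w$ in the basis $B$, say $w = \sum_{i \in I} a_i v_i$ with all $a_i \neq 0$ and $I$ nonempty. The first step is to kill all the generalized-eigenvector components: since every Jordan block of size $> 1$ has eigenvalue $0$, each operator $T \in S$ is \emph{nilpotent} on the span of the basis vectors lying in its size-$>1$ blocks. Thus, applying a sufficiently high power of any single $T$ annihilates all basis vectors not in $B'$ that sit in a nontrivial $T$-block. Because the operators commute and there are finitely many basis vectors, applying a suitable product $\prod_{T} T^{N}$ (one factor per operator, with $N = \dim V$) either sends $w$ to $0$ or sends it to a nonzero element $w'$ of $\mathrm{span}(B')$, which still lies in $W$. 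I must rule out the first possibility: a basis vector $v_j \in B'$ is an honest eigenvector for every $T$, so $\prod_T T^N$ acts on it as the nonzero scalar $\prod_T (\text{eigenvalue})^N$ \emph{provided} all those eigenvalues are nonzero; if some eigenvalue is zero I would instead drop that factor (the key point is that on $B'$ each operator acts diagonally, so no cancellation between distinct $v_j$'s can occur, and the coefficient of each surviving $v_j$ in $w'$ is $a_j$ times a product of scalars). Hence after relabeling I may assume $w = \sum_{i \in I} a_i v_i$ with $\emptyset \neq I$, all $a_i \neq 0$, and $I \subset \{ j : v_j \in B'\}$.

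Now I would separate the $v_i$ by eigenvalues. If $\#I = 1$ we are done: $w$ is a nonzero scalar multiple of some $v_i \in B'$, which therefore lies in $W$. If $\#I \geq 2$, pick $i_0, i_1 \in I$ distinct; by hypothesis there is $T \in S$ whose eigenvalues $\mu_{i_0}, \mu_{i_1}$ on $v_{i_0}, v_{i_1}$ differ. Since $B'$-vectors are honest eigenvectors for $T$, the operator $(T - \mu_{i_1}\,\mathrm{Id})$ preserves $W$, kills the $v_{i_1}$-component, and multiplies the $v_{i_0}$-component by the nonzero scalar $\mu_{i_0} - \mu_{i_1}$; more generally it multiplies each $v_i$-component ($i \in I$) by $\mu_i - \mu_{i_1}$, so the new vector $w'' = (T - \mu_{i_1}\,\mathrm{Id})w \in W$ is still supported on $B'$, is nonzero (its $v_{i_0}$-coefficient is nonzero), and is supported on the strictly smaller set $\{ i \in I : \mu_i \neq \mu_{i_1}\}$. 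Iterating this at most $\#I - 1$ times produces a nonzero element of $W$ supported on a single basis vector $v_i \in B'$, completing the proof.

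The main obstacle is bookkeeping rather than mathematics: one must be careful that in the first (nilpotent-killing) step the chosen product of powers does not inadvertently annihilate \emph{all} of $w$, and that in the second step the eigenvalue-separation procedure genuinely shrinks the support without cancellation. Both are handled by the observation that on $\mathrm{span}(B')$ every $T \in S$ acts diagonally in the basis $B'$, so the coefficient of each $v_i$ transforms by a scalar independent of the other coefficients; the only thing to track is whether that scalar is zero, which the hypothesis on distinct eigenvalues lets us control. $\hfill\qed$
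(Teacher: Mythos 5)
Your second step (separating the $B'$-vectors by their eigenvalues) matches the paper's argument and is fine, but your first step has a genuine gap. Applying $T^{N}$ with $N=\dim V$ annihilates \emph{every} basis vector lying in a nontrivial Jordan block of $T$, since those blocks have eigenvalue $0$ and are therefore nilpotent of index at most $\dim V$. Consequently, if the chosen $w\in W$ has zero coefficient on every $v_j\in B'$, your product sends $w$ to $0$ and the procedure produces nothing. This really happens: take $V$ two-dimensional with $S=\{T\}$ a single Jordan block of eigenvalue $0$ (so $Tv_2=v_1$, $Tv_1=0$), $W=V$, and $w=v_2$; then $B'=\{v_1\}$, $w$ has no $B'$-component, and $T^{2}w=0$. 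Your attempt to rule out the ``sends $w$ to $0$'' branch only discusses the coefficients of $w$ along $B'$ and is silent in exactly this situation. The ``drop the factor when the eigenvalue is zero'' patch does not rescue it either: which factors must be dropped depends on which $v_j$ you are trying to preserve (different $v_j$ may require dropping different factors), and once $T^{N}$ is dropped the components of $w$ in the nontrivial $T$-blocks are no longer killed, so the output need not lie in $\mathrm{span}(B')$. (A smaller point: with $S$ possibly infinite, $\prod_{T\in S}T^{N}$ is not literally defined; you need to select finitely many operators, say one per element of $B\setminus B'$.)

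The fix --- and this is what the paper does --- is to apply $T^{n}$ for the \emph{exact} $n$ that pushes a chosen basis component $v_i\notin B'$ down to the bottom of its Jordan block, which is an honest $T$-eigenvector of eigenvalue $0$, rather than a uniform high power that overshoots. Because distinct basis vectors in the eigenvalue-$0$ blocks of $T$ have distinct images under $T^{n}$, and the bottom vector of such a block is itself killed by $T$, the coefficient of $T^{n}(v_i)$ in $T^{n}(v)$ is exactly $a_i\neq 0$; so the new vector is nonzero, and one tracks this witness term through the iteration until it lands in $B'$ (the paper notes the resulting sequence of basis vectors cannot cycle, so the process terminates). In the example above this means applying $T$ once, not twice, landing on $v_1\in B'$.
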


\begin{proof}  Suppose $v = \sum a_i v_i \in W$ with the $a_i$ not all zero.  We first show that we can find another nonzero $v' \in W$
such that its expression as a linear combination of elements in $B$ only  contains elements of $B'$.  For this, suppose that $v_i \in B \setminus B'$ occurs in $v$ with a nonzero coefficient $a_i$.  Let $T \in S$ such that $v_i$ is not an eigenvector for $T$.  Then there is a unique $n \ge 1$ such that $T^n(v_i) \in B$ is an eigenvector for $T$.   We replace $v$ by $T^n(v)$.  This is another element of $W$; its expression as a linear combination of the $v_i$ has at most as many elements of $B \setminus B'$ as did $v$.
And the term $a_i v_i$ has been replaced by $a_i T^n(v_i)$---this uses the fact that the $T$-eigenvalue of $T^n(v_i)$ is 0.
  Note in particular that since $T^n(v_i) \in B$ occurs with a nonzero coefficient, $T^n(v) \neq 0$.  If $T^n(v_i) \in B'$, we have reduced the number of elements of $B \setminus B'$ in our linear combination.  If not, there is some other $T' \in S$ that we can apply a certain number of times, say $m$, to replace $T^n(v_i)$ by its associated $T'$-eigenvector.  Continuing in this way, we get a sequence of nonzero vectors $v \rightarrow T^n(v) \rightarrow (T')^mT^n(v) \rightarrow \cdots $ and a corresponding sequence of terms occuring in the expression of these vectors in terms of  $B$: 
  \[ a_i v_i \rightarrow a_i T^n(v_i) \rightarrow a_i (T')^mT^n(v) \rightarrow \cdots.\]
  Since this latter sequence clearly cannot a cycle, and $B$ is finite, it must terminate. This occurs when the corresponding element of $B$ actually lies in $B'$.  We have therefore created a new nonzero element of $W$ whose expression in the basis $B$ contains fewer elements of $B \setminus B'$.  Continuing this procedure yields a nonzero element of $W$ that is in the span of $B'$.
  
  Now let $v = \sum a_i v_i \in W$ with $v_i \in B'$ be such an element.  If more than one $v_i$ occurs in this linear combination with nonzero coefficient, say $v_i$ and $v_j$, then by assumption we can find  $T \in S$ such that the associated eigenvalues $\lambda_i(T)$ and $\lambda_j(T)$ are distinct.  We can replace $v$ by $T(v) - \lambda_1(T) v$.  This annihilates the $v_i$ term, but is nonzero because it does not annihilate the $v_2$ term.  Furthermore it has fewer nonzero coefficients than $v$.  Continuing in this way, we can repeatedly decrease the number of terms in the expression of $v$ until we find that there is some $v_i \in B' \cap W$.
\end{proof}

\begin{proof}[Proof 1 of Theorem~\ref{t:cusp}]  The second statement of the theorem follows from the first since $e_\fP^{\ord}(f)$ preserves the space $E[C_{\infty}(\mathfrak{P}, \mathfrak{n})].$  To prove the first statement, let $f \in M_k(\fn, E)^{\fP\dord}$ be a form whose constant terms at all cusps in $C_{\infty}(\fP, \fn)$ are zero.
Then $f$ is a sum of a cusp form and a linear combination of Eisenstein series.   The cusp form does not affect any constant terms; we can therefore assume that $f$ is a linear combination of Eisenstein series, and we must show that $f=0$.  The Eisenstein subspace has the following convenient basis, for which each of the Hecke operators is in Jordan canonical form:
\begin{equation} \label{e:jordan}
 B = \{ E_k(\eta_{\fr}, \psi_{\fs})|_\fc \}, \text{ where}
 \end{equation}
\begin{itemize}
\item $\eta$ and $\psi$ are primitive characters of conductor $\fa$, $\fb$ respectively.
\item $\fr, \fs$ are each squarefree products of primes such that $\gcd(\fa, \fr) = \gcd(\fb, \fs) = 1$.
\item $\fa\fb\fr\fs$ is divisible by all primes dividing $\fn$.
\item $\fc$ is only divisible by primes dividing $\gcd(\fa\fr, \fb\fs)$.
\item $\fa\fb\fr\fs\fc$ divides $\fn$.
\end{itemize}
Since this is a lot of notation, it is behooves us to demonstrate this with an example.  Suppose that $\eta$ and $\psi$ are primitive of conductor $\fa, \fb$ with associated Eisenstein series $E_k(\eta, \psi) \in M_k(\fa\fb)$.  Let $\fp$ be a prime not dividing $\fa\fb$.
For $n  \ge 1$, the generalized eigenspace of $M_k(\fa\fb \fp^n)$ corresponding to $E_k(\eta, \psi)$---i.e.\ the subspace on which all the Hecke operators away from $\fp$ act via the eigenvalues of $E_k(\eta, \psi)$---has 2 or 3 Jordan blocks for the action of $U_\fp$: (1) the form $E_k(\eta_\fp, \psi)$ with $U_\fp$-eigenvalue $\psi(\fp)\N\fp^{k-1}$, (2) the form $E_k(\eta, \psi_\fp)$  with $U_\fp$-eigenvalue $\eta(\fp)$, and (3) if $n \ge 2$, a Jordan block with  $U_\fp$-eigenvalue 0 and basis $E_k(\eta_\fp, \psi_\fp)|_{\fp^i}$ as $i=0, \dotsc, n-2$.
Here $U_\fp(E_k(\eta_\fp, \psi_\fp)|_{\fp^i}) = E_k(\eta_\fp, \psi_\fp)|_{\fp^{i-1}}$ for $i \ge 1$, and  $U_\fp(E_k(\eta_\fp, \psi_\fp)) = 0$.
  The basis (\ref{e:jordan}) is the generalization of this case to the general setting.

Now, the space $E_k(\fn, E)^{\fP\dord}$ is the subspace of $E_k(\fn, E)$ generated by the subset $B_\fP \subset B$ consisting of the $E_k(\eta_{\fr}, \psi_{\fs})|_\fc$ such that $\fa\fr$ is coprime to $\fP$.   We apply Lemma~\ref{l:jordan} where the set of endomorphisms $S$ is the set of Hecke operators indexed by the primes not dividing $\fP$: the $T_\fq$ for $\fq \nmid \fn$ and $U_\fq$ for $\fq \mid \fn/\fP$.  The subspace $W \subset  
E_k(\fn, E)^{\fP\dord}$ is taken to be the subspace of elements whose constant terms at all cusps in $C_\infty(\fP, \fn)$ vanish.  This subspace is fixed by the Hecke operators away from $\fP$.
We need to prove that $W = \{0\}$, and the lemma implies that is suffices to show that no eigenvector in $B_\fP$ lies in $W$.  The subset $B_\fP' \subset B_\fP$ of eigenvectors is the set of $E_k(\eta_{\fr}, \psi_{\fs})|_\fc$ such that $\fa\fr$ is coprime to $\fP$ and $\fc = 1$.
It remains to prove that for such a form $E_k(\eta_{\fr}, \psi_{\fs})$, there exists a cusp $[\cA] \in C_\infty(\fP, \fn)$ such that the constant term  at $\cA$ is nonzero. 

For this, we first note that it suffices to show this at the minimal level at which $E_k(\eta_{\fr}, \psi_{\fs})$ appears, namely $\fn' = \fa\fb\fr\fs$.  Indeed, if we let $\fP'$ be the $p$-part of $\fn'$, then the canonical map $C_\infty(\fP, \fn) \rightarrow C_\infty(\fP', \fn') $ is surjective.  Therefore we assume that $\fn = \fa\fb\fr\fs$.  

Write $\fs = \fs_1 \fs_2$, where $\fs_1 = \gcd(\fs, \fa\fr)$. Note that $\fP \mid \fb \fs_2$. Let $\cA$ be a cusp in $C_{\infty}(\fb\fs, \fn) \subset C_{\infty}(\fP, \fn)$ such that $\fc_{\cA}/\fb\fs_1$ is coprime with $\fa\fr$. We use the expression
\[
E_k(\eta_\fr, \psi_\fs) = \sum_{\fm \mid \fs_1} \mu(\fm) \psi(\fm)\N \fm^{k-1} E_k(\eta_\fr, \psi_{\fs_2})|\fm.
\]
to show that the constant term of $E_k(\eta_\fr, \psi_\fs)$ at $\cA$ is nonzero.
By Lemma \ref{l:fq}, the constant term of $E_k(\eta_{\fr}, \psi_{\fs_2})|_{\fm}$ at $\cA$ equals the constant term of $E_k(\eta_\fr, \psi_{\fs_2})$ at some other $\cA'$, where $[\cA'] \in C_\infty(\fb \fs /\fm, \fn/\fm)$.  If $\fm \neq \fs_1$, then by definition $\fb \fs /\fm$ is not coprime to $\fa \fr$.  Then $P_{\cA'}(\eta, \psi, k) = 0 $ because of the $\eta(\fc_{\cA'})$ factor, and hence by Theorem \ref{t:levelraised} the constant term is 0. On the other hand if $\fm = \fs_1$ then Theorem \ref{t:levelraised} shows that this constant term is nonzero as long as $\fc_{\cA'}/\fb$ is coprime with $\fa\fr$. This holds because $\fc_{\cA}/\fb\fs_1$ is coprime with $\fa\fr$. The result follows.
\end{proof}

\begin{proof}[Proof 2 of Theorem~\ref{t:cusp}] Our second proof of the first statement is a direct computation using  the action of the Hecke operator $U_{\fp}$ for each $\fp \mid \fP$.  First we note that since we are on the ordinary subspace, each  $U_\fp$ acts semisimply (see \cite{hida}*{pg.\ 382}).  Furthermore the operator $U_\fp$ preserves $C_\infty(\fP, \fn)$.   Therefore it suffices to consider the case where $f$ is a $U_\fp$-eigenvector for each $\fp \mid \fP$.  

Let us recall the explicit definition of the operator $U_\fp$. For each $\mu \in \Cl^+(F)$, let $\lambda \in \Cl^+(F)$ denote the class of $\mu \fp^{-1}$.  Write $\ft_\lambda \ft_\mu^{-1} \fp = (x)$ where $x$ is a totally positive element of $F^*$.
Given $\beta \in F$ define $m_\beta = \mat{1}{\beta}{0}{x}.$
Then $f|_{U_\fp} = (g_\mu)_{\mu \in \Cl^+(F)}$ where 
\begin{equation} \label{e:updef}
g_\mu = \N\fp^{(k-2)/2} \sum_{\beta \in  \ft_\mu^{-1} \fd^{-1}/  \ft_\mu^{-1} \fd^{-1}\fp} (f_\lambda)|_{m_\beta}. \end{equation}
Let $\mathcal{A} = (A, \mu)$ represent a cusp.  If $\fp^r \mid \fc_\cA$ with $r > 0$, then one readily checks that $\fp^{r+1} \mid \fc_{\cA'}$, where $\cA' = (m_\beta A, \lambda)$ is an associated cusp appearing in (\ref{e:updef}).  Therefore since $f$ is a $U_\fp$-eigenform with nonzero eigenvalue for each $\fp \mid \fP$, and its constant terms vanish on $C_\infty(\fP, \fn)$, then by applying $U_\fp$ repeatedly we see that the constant terms of $f$ vanish on $C_\infty(\fP_0, \fn)$, where $\fP_0$ is the product of the distinct primes dividing $\fP$.

Next, to show that $f$ has vanishing constant terms at all cusps in $\cusps(\fn)  = C_\infty(1, \fn)$, we show that we can remove the primes in $\fP_0$ one-by-one.  Therefore, let $\fP_1 \mid \fP_0$, and let $\fp \mid \fP_1$.  We will show that the cuspidality of $f$ on $C_\infty(\fP_1, \fn)$ implies its cuspidality on $C_\infty(\fP_1/\fp, \fn)$.  Sequentially removing all the primes $\fp \mid \fP_0$ in this fashion will  then give the desired result.

For this, we use the expression (\ref{e:updef}) once again.  We also introduce the notation $f(\cA)$ to denote the normalized constant term of $f$ at the cusp $\cA$.  If $\cA \in C_\infty(\fP_1/\fp, \fn)$ but $\cA \not \in C_\infty(\fP_1, \fn)$, then one can check directly from the definitions that there is a unique $\beta \in   \ft_\mu^{-1} \fd^{-1}/  \ft_\mu^{-1} \fd^{-1}\fp$ such that the associated cusp $\cA' = (m_\beta A, \lambda)$ also does not lie in $C_\infty(\fP_1, \fn)$; for all the other $\beta$, the associated cusp {\em does} lie in  $C_\infty(\fP_1, \fn)$.  The cuspidality of $f$ on $C_\infty(\fP_1, \fn)$ therefore implies that 
\[
a_\fp f(\cA) = f|_{U_{\fp}}(\cA) = \N \fp^{k-1} f(\cA'),
\]
where $a_\fp$ denotes the $U_\fp$-eigenvalue of $f$.  Note that the constant $\N \fp^{k-1}$ arises from tracing through our normalization factors on constant terms.
Now, the set $C_\infty(\fP_1/\fp, \fn) \setminus C_\infty(\fP_1, \fn)$ is finite, so continually repeating this process, the sequence \begin{equation} \label{e:aseq}
 \cA \rightarrow \cA' \rightarrow \cdots \end{equation}  must eventually arive at a repetition.  At this point we obtain an equation of the form
\[
a_\fp^r f(\cA'') = \N\fp^{(k-1)r} f(\cA'')
\]
for some positive integer $r$ and some cusp $\cA''$.
As the Hecke eigenvalue $a_\fp$ is a $p$-adic unit and $k > 1$,  we have  $a_\fp^r  \neq \N\fp^{(k-1)r}$ for any positive integer $r$.
We obtain $f(\cA'') = 0$, and hence the same is true for every other cusp appearing in the sequence (\ref{e:aseq}); in particular $f(\cA) = 0$ as desired. 
\end{proof}

\subsection*{Acknowledgements} We thank Jesse Silliman for helpful conversations during the writing of this paper.

\end{document}